\documentclass[reqno, a4paper]{amsart}

\usepackage{txfonts} 

\usepackage{amsmath, amssymb, bbm, amsthm,enumitem, hyperref, color}
\usepackage{comment}

\DeclareMathOperator{\Tr}{Tr}

\newtheorem{thm}{Theorem}[section]

\newtheorem{definition}{Definition}[section]
\newtheorem{proposition}{Proposition}[section]
\newtheorem{corollary}{Corollary}[section]
\newtheorem{remark}{Remark}[section]
\newtheorem{lemma}{Lemma}[section]
\numberwithin{equation}{section}

\newcommand{\pa}{\partial}

\newcommand{\R}{\mathbb R}
\newcommand{\E}{\mathbb E}
\newcommand{\teta}{\tilde \eta}

\DeclareMathOperator*{\argmax}{arg\,max}

\thanks{JB is supported by the FWF grant DOI 10.55776/P36835. ZW is supported by NSFC Grant No.12141105.}

\title{Exciting games and Monge-Amp\`{e}re equations}

\author{Julio Backhoff}
\address{ Department of Mathematics, University of Vienna}
\email{julio.backhoff@univie.ac.at}

\author{ZhiZhang Wang} 
\address{School of Mathematical Science, Fudan University, Shanghai, China}
\email{zzwang@fudan.edu.cn}

\author{Xin Zhang} 
\address{ Department of Finance and Risk Engineering, New York University}
\email{xz1662@nyu.edu}

\date{}

\begin{document}

\begin{abstract}

We consider a competition between $d+1$ players, and aim to identify the ``most exciting game'' of this kind. This is translated, mathematically, into a stochastic optimization problem over  martingales that live on the $d$-dimensional subprobability simplex $\Delta$ and terminate on the vertices of $\Delta$ (so-called \emph{win-martingales}), with a cost function related to a scaling limit of Shannon entropies. We uncover a surprising connection between this problem and the seemingly unrelated field of \emph{Monge-Amp\`{e}re} equations: If $g$ solves
\begin{equation*}
\begin{cases}
g(x)=\log \det\left(\frac{1}{2}\nabla^2 g(x)\right), \quad  \, \, \,  \, x \in \Delta, \\
g(x)=\infty, \quad \quad \quad \quad  \ \ \ \ \ \quad \quad  \, \ \ \ \ \ x\in \partial \Delta,
\end{cases}
\end{equation*}
then the winning-probability of the players in the most exciting game is given by   $$dM_s=\sqrt{\frac{2 (\nabla^2 g(M_s))^{-1}}{1-s} } \, dB_s.$$ To formalize this, a detailed quantitative analysis of the Monge-Amp\`{e}re equation for $g$ is crucial. This is then leveraged to prove that $M$ is indeed an optimal win-martingale.

\end{abstract}

\maketitle

\section{Introduction}

This paper is inspired by the question \[\text{``What is the most exciting game?"},\] formulated by David Aldous  \cite{Al22b,Al22a}. In this problem one seeks to determine the optimal strategy for revealing information in a game, in order to capture and sustain the attention of an audience. This is relevant not only in the entertainment / sports industry but also in fields such as behavioral economics and education \cite{El17,JAE15}. Let us first introduce a mathematical framework which allows to explore the above question. 

Suppose there is a game (or an election, etc.) between $d+1$ players during the time period $[t,1]$. Assume the dynamics of the winning probability of  player $i$ is represented by a stochastic process $(M^i_s)_{s \in [t,1]}$, $i=0,\dotso, d$. It is clear that $\sum_{i=0}^d M^i_s=1$, so the winning probability at time $s$ of all players is fully characterized by the $d$-vector $M_s:=(M^1_s, \dotso, M_s^d)$. Denote the interior of the state space of $M$ by
\[\textstyle\Delta:=\left\{(x_1,\dotso,x_d) \in \mathbb{R}^d: \, x_i > 0, \, i=1,\dotso,d, \, \sum_{i=1}^d x_i < 1 \right\}, \] 
and its set of vertices by $$\mathcal{V}:=\{0,e_1,\dotso,e_d \}.$$
For prediction markets where people can bet on the outcome of the game, the stochastic process $(M_s)_{s \in [t,1]}$ is usually modeled as a martingale, owing to market efficiency assumptions such as the no-arbitrage principle. At the terminal time $s=1$, we have $M_1 \in \mathcal{V}$ $a.s.$ since the outcome of the game would be revealed at its end, and hence one and only one of $\{M^i_1\}_{i=0,\dotso,d}$ is equal to $1$. { On the other hand, if the game starts at time $t$ then $M_t$ encodes the apriori winning probabilities of the players and is deterministic.} For these reasons, we identify a game in this paper with a so-called \emph{win-martingale}:

\begin{definition}\label{def:win_intro}
   A $d$-dimensional martingale $(M_s)_{s \in [t,1]}$ is a win-martingale on $[t,1]$ if $M_t \in \Delta$ is deterministic, $M_1 \in \mathcal{V}$ a.s., and the quadratic covariation of $(M_s)_{s\in[t,1]}$ is absolutely continuous. Denote by $\mathcal{M}^{win}_{t,x}$ the set of  win-martingales starting from  $x \in \Delta$ at time $t$.  
\end{definition}
We remark that win-martingales have been used as models for sports games between multiple players, political betting, as well as for double barrier financial options, and are also known as belief-martingales in the context of information design \cite{El17,JAE15,Zh22}. {\color{blue}\footnote{We thank R. Aid for bringing these relevant works in Economics to the attention of one of the authors.} }

With a mathematical model of a game at hand, we now explain how to measure its level of ``excitement''. For random variables, the usual notion of excitement or uncertainty is the Shannon entropy of its law, since it is known to quantify the level of unpredictability / novelty / information content within a system \cite{Be60,Sh48}. See however \cite{JAE15} for more possibilities. For stochastic processes in continuous time this is much more subtle, since there is no canonical reference measure playing the role of Lebesgue measure in such infinite-dimensional framework.  The criterion considered in this paper is based on the idea of taking a scaling limit of Shannon entropies for time-discretized processes, as suggested in \cite{Al22b,Al22a}. For two players (i.e.\ $d=1$), Beiglb\"{o}ck and the first author observed in \cite{BBexciting} that maximizing this scaling limit of entropies over win-martingales is related to maximizing the functional $\mathbb E [ \int_t^1  \log\det (\Sigma_s) \, ds]$ over $M \in \mathcal{M}_{t,x}^{win}$, where $(\Sigma_s)_{s \in [t,1]}$ is the instantaneous quadratic variation of $M$. We elaborate more on this in Section \ref{sec:lite} below.

In the present work we take the point of view of \cite{BBexciting} too, but in higher dimensions, i.e., when the game has an arbitrary finite number of players / outcomes. Accordingly, 
throughout this paper we will be concerned with the problem 
\begin{align}\label{eq:optimal}\textstyle 
v(t,x):= \inf_{M \in \mathcal{M}^{win}_{t,x}}\mathbb{E}\left[ \int_t^1  -\log \det(\Sigma_s) \, ds \right], \tag{OPT}
 \end{align}
 where we passed to a formulation with an infimum for notational convenience at later stages. The function $v:[0,1] \times \Delta \to \mathbb R\cup\{+\infty\}$ is then the so-called value function of the problem. Following the stochastic control playbook we can guess the  Hamilton-Jacobi-Bellman (HJB)  equation associated to this problem, and a few computations reveal that 
\begin{equation}\label{eq:HJB} \tag{HJB}
\begin{cases}
 -\pa_t v(t,x)=\log\det \left( \frac{1}{2} \nabla^2 v(t,x) \right)+d,  \hspace{14pt} (t,x) \in [0,1) \times \Delta,  \\
\quad  \, \ v(t,x)=+\infty, \quad  \hspace{80pt} (t,x) \in [0,1) \times \partial \Delta, \\
\quad \ \, v(t,x)=0, \quad \quad  \hspace{80pt}  (t,x) \in \{1\} \times \Delta.
\end{cases}
\end{equation}
In Proposition~\ref{prop:uniqueness}, we show that \eqref{eq:HJB} is uniquely solved by $v$ in the viscosity sense.

To further identify the optimizer of \eqref{eq:optimal}, we observe first, via a scaling argument, that $v(t,x)=(1-t)v(0,x)+d(1-t) \log(1-t)$. Together with \eqref{eq:HJB} this then shows that  the function $\Delta \ni x \mapsto v(0,x)$ satisfies the following equation  on the simplex 
\begin{equation}\label{eq:Monge} \tag{MA}
\begin{cases}
g(x)=\log \det\left(\frac{1}{2}\nabla^2 g(x)\right), \quad x \in \Delta, \\
g(x)=+\infty, \quad \quad \quad \quad  \ \  \quad \quad x \in \partial \Delta. 
\end{cases}
\end{equation}
This is an elliptic Monge-Amp\`{e}re equation with \emph{infinite boundary condition.} Monge-Amp\`{e}re equations are fundamental in optimal transport and differential geometry \cite{Fi17, Le24, CNS1}.   The well-posedness of the Dirichlet problem with regular boundary conditions has been established by Caffarelli, Nirenberg, and Spruck in \cite{CNS1}.
However, the absence of the strict convexity and smoothness of the domain $\Delta$ in \eqref{eq:Monge} creates challenges in analyzing the regularity and the boundary asymptotics of the equation, which are crucial for the construction of the optimal win-martingale. We overcome these difficulties by designing barrier functions and utilizing an invariant property of \eqref{eq:Monge}, i.e.,  scaled smooth solutions satisfy essentially the same equation.

Denoting the minimal eigenvalue of a symmetric matrix $A$ by $\lambda_{min}(A)$, we can state the first main result of this paper:
\begin{thm}\label{thm:quantitive}
A unique smooth solution $g$ to \eqref{eq:Monge} exists, and the following estimates hold
\begin{itemize}
    \item[(i)] $\lim\limits_{x \to x^0 } \lambda_{min}( \nabla^2 g(x)) < +\infty, \quad \text{for $x^0  \in \partial \Delta \backslash \{0,e_1,\dotso,e_d\}$} $;
    \item[(ii)] $\sup_{x\in\Delta } \nabla g(x)^{\top} (\nabla^2g(x))^{-1} \nabla g(x)<\infty $. 
\end{itemize}
\end{thm}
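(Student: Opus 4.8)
The plan is to combine the comparison principle for Monge--Amp\`{e}re equations, the scaling invariance of \eqref{eq:Monge}, and carefully designed barriers, organised as an induction on the dimension $d$; the case $d=1$ reduces to elementary facts about the ODE $g''=2e^{g}$ on $(0,1)$.

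\emph{Existence, regularity and uniqueness.} Write \eqref{eq:Monge} as $\det\nabla^{2}g=2^{d}e^{g}$, whose right-hand side is strictly increasing in $g$, so the comparison principle holds between convex sub- and supersolutions. Let $\ell_{1},\dots,\ell_{d+1}$ be the affine functions cutting out the $d+1$ facets of $\Delta$. Using the elementary bounds $\tfrac{1}{d+1}\le \ell_{d+1}(x)^{2}+\sum_{i=1}^{d}x_{i}^{2}\le 1$ on $\overline\Delta$ one checks that $\overline U:=-2\sum_{j}\log\ell_{j}$ is a supersolution and $\underline U:=\overline U-\log(d+1)$ a subsolution of \eqref{eq:Monge}, both $=+\infty$ on $\partial\Delta$. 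Solving the Dirichlet problems $\det\nabla^{2}u_{n}=2^{d}e^{u_{n}}$ in $\Delta$ with $u_{n}=n$ on $\partial\Delta$ (Aleksandrov solutions on the bounded convex set $\Delta$), comparison gives $u_{n}\le u_{n+1}\le\overline U$, so $g:=\sup_{n}u_{n}$ is an Aleksandrov solution of $\det\nabla^{2}g=2^{d}e^{g}$ in $\Delta$; since $g\ge u_{n}$ and $u_{n}\to n$ at $\partial\Delta$, one gets $g=+\infty$ on $\partial\Delta$, and Caffarelli's interior estimates followed by Schauder bootstrap give $g\in C^{\infty}(\Delta)$, strictly convex. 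The lower bound $\underline U\le g$ comes from a shrinking trick: for $s<1$ the function $\underline U\big(sx+(1-s)x_{c}\big)+2d\log s$, with $x_{c}\in\Delta$ fixed, is a subsolution on a neighbourhood of $\overline\Delta$, hence $\le g$ on $\partial\Delta$ and so on $\Delta$; let $s\uparrow1$. Uniqueness is similar: if $g_{1},g_{2}$ both solve \eqref{eq:Monge}, then for $\lambda<1$ the function $g_{2}(\lambda x)+2d\log\lambda$ solves the same equation on a domain strictly containing $\overline\Delta$, hence is $\le g_{1}$ on $\partial\Delta$ and on $\Delta$; letting $\lambda\uparrow1$ gives $g_{2}\le g_{1}$, and symmetrically $g_{1}\le g_{2}$.

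\emph{Strategy for (i)--(ii): blow-up at the boundary.} The previous step yields the two-sided bound $g(x)=-2\sum_{j}\log\ell_{j}(x)+O(1)$. Both estimates concern $\nabla^{2}g$ near $\partial\Delta$, and I would extract them by a rescaling/compactness argument exploiting two structural facts: \eqref{eq:Monge} is preserved, up to an additive constant, under $g\mapsto g\circ T+2\log|\det T|$ for invertible linear $T$ (``scaled smooth solutions satisfy essentially the same equation''), and the quantity $q(x):=\nabla g(x)^{\top}(\nabla^{2}g(x))^{-1}\nabla g(x)$ from (ii) is invariant under such affine changes of variables and under additive constants. Fix $x^{0}$ in the relative interior of a $k$-dimensional face $\mathcal{F}=\{\ell_{i}=0:i\in I\}$, $|I|=d-k$; let $T$ be the tangent space of $\mathcal{F}$, $N=T^{\perp}$, and put $\tilde g_{\rho}(y):=g\big(x^{0}+P_{T}y+\rho P_{N}y\big)+2(d-k)\log\rho$. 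By the rescaling covariance each $\tilde g_{\rho}$ solves the same Monge--Amp\`{e}re equation on a domain that converges, as $\rho\downarrow0$, to $U_{0}=(\text{a ball in }T)\times(\text{a simplicial cone in }N)$; the $O(1)$ bound forces the $\tilde g_{\rho}$ to be uniformly bounded on compact subsets of $U_{0}$, hence (Caffarelli's interior $C^{2,\alpha}$ estimates plus bootstrap) uniformly bounded in $C^{2}_{\mathrm{loc}}(U_{0})$, so along a subsequence $\tilde g_{\rho}\to\tilde g_{0}$ in $C^{2}_{\mathrm{loc}}$, a convex solution on $U_{0}$. The rescaled barriers pin $\tilde g_{0}$ within $O(1)$ of $-2\sum_{i\in I}\log\!\big(\nabla\ell_{i}\cdot P_{N}(\cdot)\big)+g^{(k)}(\pi_{\mathcal{F}}(\cdot))$, where $g^{(k)}$ is the $k$-dimensional large solution provided by the induction hypothesis, and a Liouville-type argument (comparison plus scaling, now on the cone $U_{0}$, as in the uniqueness proof) forces equality up to an additive constant.

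\emph{Conclusion, and the main difficulty.} Granting this identification, the $C^{2}_{\mathrm{loc}}$ convergence gives the boundary asymptotics of $\nabla^{2}g$: its $N$-eigenvalues diverge like $\rho^{-2}$ while its $T$-block converges to $\nabla^{2}g^{(k)}(\pi_{\mathcal{F}}(x^{0}))$, which is finite at this interior point of the $k$-simplex; hence $\lambda_{min}(\nabla^{2}g(x))\to\lambda_{min}\big(\nabla^{2}g^{(k)}(\pi_{\mathcal{F}}(x^{0}))\big)<\infty$ when $k\ge1$, proving (i), while for $k=0$, i.e.\ at a vertex of $\mathcal{V}$, all eigenvalues diverge — which is precisely why $\mathcal{V}$ is excluded. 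For (ii), affine invariance gives $q(x)=q_{\tilde g_{\rho}}(\cdot)\to q_{\tilde g_{0}}(\cdot)$, and since $\tilde g_{0}$ is block-diagonal in $N\oplus T$, $q_{\tilde g_{0}}=q_{g^{(k)}}\!\circ\pi_{\mathcal{F}}+\beta^{\top}A^{-1}\beta$ with $A=\sum_{i\in I}\tfrac{2}{\ell_{i}^{2}}\nabla\ell_{i}\nabla\ell_{i}^{\top}$ and $\beta=-\sum_{i\in I}\tfrac{2}{\ell_{i}}\nabla\ell_{i}$; writing $A=WW^{\top}$, $\beta=-\sqrt{2}\,W\mathbf{1}$ with $W$ invertible gives $\beta^{\top}A^{-1}\beta=2\,\mathbf{1}^{\top}W^{\top}(WW^{\top})^{-1}W\mathbf{1}=2\,\mathbf{1}^{\top}\mathbf{1}=2(d-k)$. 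Thus $\limsup_{x\to\partial\Delta}q(x)\le 2d$ (with value $2d$ approached at the vertices), and continuity of $q$ on compact subsets of $\Delta$ then forces $\sup_{\Delta}q<\infty$, which is (ii). I expect the real work to be exactly this boundary analysis, carried out uniformly at faces of \emph{all} dimensions at once: since $\Delta$, and the simplicial cones arising as blow-up limits, are neither strictly convex nor smooth, the boundary regularity theory of \cite{CNS1} is unavailable, and one must instead weave together explicit barriers, interior Monge--Amp\`{e}re regularity after rescaling, and the scaling invariance into a clean induction on $d$; the genuinely delicate points are the Liouville-type identification of the blow-up limits on the cones, and the uniformity of the estimates near the parts of $\partial\Delta$ adjacent to lower-dimensional faces — where $x\to x^{0}$ may ``hug'' a super-face of $\mathcal{F}$, which necessitates iterating the blow-up.
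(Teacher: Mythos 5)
Your existence–uniqueness argument (Aleksandrov solutions with increasing boundary data $n$, monotone comparison against the log-barrier $\overline U$, Caffarelli interior regularity plus bootstrap, and the shrinking trick $g_2(\lambda x)+2d\log\lambda$ for uniqueness) is a valid alternative to the paper's approach, which instead solves classical Dirichlet problems on the smooth strictly convex sublevel sets $\Omega_n=\{w<n\}$ via \cite{CNS1} and passes to the limit. Both hinge on the same barrier $w=-2\sum_j\log\ell_j$ and the same scaling covariance $g\mapsto g\circ T + 2\log|\det T|$, which the paper also singles out as the key invariance. So far the two routes are genuinely different but comparably feasible, and the paper itself notes that existence was already known from \cite{GuJi04}.

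For (i)–(ii), however, there is a real gap. You propose to rescale anisotropically about a face point $x^0$, extract a $C^2_{\rm loc}$ limit $\tilde g_0$ on a product cone, and then identify $\tilde g_0$ via a Liouville-type theorem (``comparison plus scaling on the cone, as in the uniqueness proof'') so that its Hessian splits into an explicit $N$-block plus $\nabla^2 g^{(k)}$ on the tangent block. You yourself flag the Liouville identification and the uniformity near lower-dimensional super-faces as ``the real work,'' but the proposal never carries it out. The shrinking trick that proves uniqueness on the bounded domain $\Delta$ does not transfer directly to the unbounded cone $U_0$: there is no interior point around which to shrink that keeps the rescaled function $\le+\infty$ on the whole boundary of $U_0$ while the rescaled domain covers $\overline{U_0}$, and on unbounded convex domains Monge--Amp\`ere with infinite boundary data is not unique without an additional growth condition at infinity, which you would have to establish for your limit. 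Without this, neither the finiteness of $\lambda_{\min}(\nabla^2 g)$ away from vertices nor the explicit value $q_{\tilde g_0}=q_{g^{(k)}}\circ\pi_{\mathcal F}+2(d-k)$ is justified. The paper sidesteps the issue entirely: after the same anisotropic rescaling to $\tilde\eta^{x_{1:k}}$ it never passes to a limit or classifies one. Instead it proves Pogorelov-type $C^1$ and $C^2$ estimates for $\tilde\eta^{x_{1:k}}$ at the fixed finite point $(0,x^0_{k+1:d})$, with constants depending only on the sublevel threshold $C$ and hence uniform in $x_{1:k}$ (Lemmas~\ref{lem:ybound}--\ref{lem:fsecondorder}); the two-sided Hessian bound in Proposition~\ref{prop:secondorder} and the affine invariance of $q=\nabla g^\top(\nabla^2 g)^{-1}\nabla g$ then give (i) and (ii) directly, without any blow-up classification or induction on $d$. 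To complete your proposal you would have to supply the Liouville theorem on cones with uniform control in $x^0$; as written, this step is asserted rather than proved.
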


We stress that the novelty of Theorem \ref{thm:quantitive} lies in (i)-(ii) (especially (i)), and that these are also the most challenging parts of it. { Although the existence of a smooth solution has been proved by \cite{GuJi04}, we provide an alternative argument to make the paper self-contained.}

With the regularity and estimates of $g$ at hand, going back to the HJB Equation we identify the candidate optimal martingale as the solution to the stochastic differential equation 
\begin{align}\label{eq:AMTG}\textstyle
dM_s=\sqrt{\frac{2 (\nabla^2 g(M_s))^{-1}}{1-s} } \, dB_s\,,  \tag{AM}
\end{align}
where $(B_s)$ is a $d$-dimensional Brownian motion. In tandem with \cite{BBexciting} we refer to $M$ as the \emph{Aldous Martingale}. The well-posedness of \eqref{eq:AMTG} and the fact that $M_1 \in \mathcal{V}$ $a.s.$, require significant work and heavily rely on the boundary asymptotics obtained in Theorem~\ref{thm:quantitive}. This eventually leads us to the  second main result of the paper:

\begin{thm}\label{thm:intro_existence}
For any $x\in\Delta$, we have that 
\begin{itemize}
    \item[(i)] There exists a strong solution to \eqref{eq:AMTG} starting from position $x$ at time $t$;
    \item[(ii)] $(M_s)_{s \in [t,1]}$ is a win-martingale. 
\end{itemize}
Furthermore, the Aldous martingale \eqref{eq:AMTG} is the unique (in law) optimizer of \eqref{eq:optimal}.
\end{thm}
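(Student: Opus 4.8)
My plan is to split the proof into the three assertions: existence of a strong solution to \eqref{eq:AMTG}, the win-martingale property, and optimality. For part (i), the SDE \eqref{eq:AMTG} has a diffusion coefficient $\sigma(s,x) = \sqrt{2(\nabla^2 g(x))^{-1}/(1-s)}$ that is smooth and locally Lipschitz on the open time-space domain $[t,1) \times \Delta$, since $g$ is smooth (Theorem~\ref{thm:quantitive}) and $\nabla^2 g$ is positive definite there. So on any compact subset of $[t,1)\times\Delta$ classical results give a unique strong solution up to the exit time $\tau$ from $\Delta$, or up to time $1$. The first real task is to show that $M$ does not exit $\Delta$ through the relative interior of a face before time $1$, i.e.\ that $\tau \geq 1$ a.s.\ (with $M_\tau \in \mathcal V$ on $\{\tau = 1\}$). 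Here is where Theorem~\ref{thm:quantitive}(i) enters: near a boundary face (away from the vertices) $\lambda_{\min}(\nabla^2 g)$ stays bounded, so the largest eigenvalue of $(\nabla^2 g)^{-1}$ blows up at that face — meaning the diffusion is strongly pushed \emph{along}, and non-degenerately \emph{away} from, such faces; a Lyapunov/barrier argument using $g$ itself (which is $+\infty$ on $\partial\Delta$) should show $g(M_s)$ is a supermartingale-like quantity that cannot reach $+\infty$, ruling out exit through the relative interior of the faces. The structure is such that the process can only reach $\partial \Delta$ at the vertices $\mathcal V$.

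For part (ii), I must upgrade this to: $M$ reaches $\mathcal V$ exactly at time $1$. That $M_1 \in \mathcal V$ a.s.\ should follow from the time-singularity $1/(1-s)$ in the volatility, which forces the quadratic variation to explode as $s\to 1$ unless $M$ has already been absorbed; combined with the fact that $M$ is a bounded martingale (hence converges a.s.\ as $s\to 1$) and that the only ``absorbing'' points compatible with a win-martingale are the vertices, one concludes $M_1 \in \mathcal V$. The martingale property itself is clear once we know the diffusion coefficient is, after the time-change $u = -\log(1-s)$, a time-homogeneous non-explosive diffusion with bounded coefficients on $\Delta$ — the stochastic integral is a true martingale because $M$ is bounded (it lives in the bounded set $\bar\Delta$). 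Absolute continuity of the quadratic covariation is immediate from the SDE form, $d\langle M\rangle_s = \frac{2(\nabla^2 g(M_s))^{-1}}{1-s}\,ds$. So $M \in \mathcal M^{win}_{t,x}$.

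For optimality, the natural route is a verification argument. Recall $v(t,x) = (1-t)g(x) + d(1-t)\log(1-t)$ solves \eqref{eq:HJB}. For an arbitrary $M' \in \mathcal M^{win}_{t,x}$ with instantaneous covariance $\Sigma'_s$, I would apply Itô to $v(s, M'_s)$ on $[t, 1-\varepsilon]$, use the HJB PDE and the pointwise inequality $\log\det(\tfrac12 A) + d + \tfrac12\Tr(\nabla^2 v \cdot A) \geq -\log\det(A)$ for all positive definite $A$ (this is the Fenchel-type inequality underlying the HJB, with equality iff $A = (\nabla^2 v)^{-1}$, equivalently $A = (\nabla^2 g)^{-1}/(1-s)$ after the scaling), and take expectations to get $\mathbb E[\int_t^{1-\varepsilon} -\log\det(\Sigma'_s)\,ds] \geq v(t,x) - \mathbb E[v(1-\varepsilon, M'_{1-\varepsilon})]$; then let $\varepsilon \to 0$. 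For the Aldous martingale $M$ the inequality is an equality by construction, giving the matching upper bound. Uniqueness in law follows because equality in the Fenchel inequality pins down $\Sigma_s = (\nabla^2 g(M_s))^{-1}/(1-s)$ a.e., so any optimizer solves the same SDE \eqref{eq:AMTG}, whose solution is unique in law.

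The main obstacle, and the step requiring the most care, is controlling the behavior of $M$ at the boundary: proving $\tau \geq 1$ (no premature exit through the faces) and the limiting integrability needed to pass $\varepsilon \to 0$ in the verification — in particular showing $\mathbb E[v(1-\varepsilon, M'_{1-\varepsilon})] \to 0$, i.e.\ $(1-\varepsilon)\mathbb E[g(M'_{1-\varepsilon})] \to 0$, which is delicate because $g$ is unbounded near $\partial\Delta$ and $M'_{1-\varepsilon}$ concentrates near $\mathcal V$. This is exactly why the quantitative estimates of Theorem~\ref{thm:quantitive}, especially the boundary asymptotics (i) and the gradient bound (ii), are indispensable: (ii) controls the drift of $g(M_s)$ under \eqref{eq:AMTG} and (i) gives the rate at which $g$ blows up near the faces, which one needs to bound $\mathbb{E}[g(M'_{1-\varepsilon})]$ uniformly and extract the correct $o(1/(1-\varepsilon))$ decay.
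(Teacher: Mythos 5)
Your parts (i)--(ii) follow essentially the paper's route: local Lipschitz regularity of the coefficient gives existence up to exit, a Lyapunov argument with $g$ itself precludes premature exit, and the finiteness of the quadratic variation combined with Theorem~\ref{thm:quantitive}(i) forces $M_1\in\mathcal V$. One correction to your narrative, though: Theorem~\ref{thm:quantitive}(i) is \emph{not} what rules out exit through a face before time~$1$. That step (Lemma~\ref{lem:exit}) is a plain Lyapunov estimate using only the Monge--Amp\`ere equation, namely $\frac12\Tr(\nabla^2 g\,(\nabla^2 g)^{-1})=d/2\leq c\,g$, which makes $e^{-ct}g(Y_t)$ a supermartingale and prevents $g(Y_t)$ from reaching $+\infty$ in finite time. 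Theorem~\ref{thm:quantitive}(i) enters only later, in Lemma~\ref{lem:appl_MA_win}: since $M$ is bounded, $\mathbb E[\int_0^1\frac{\Tr((\nabla^2 g(M_t))^{-1})}{1-t}\,dt]<\infty$, and since $\Tr((\nabla^2 g)^{-1})\geq 1/\lambda_{\min}(\nabla^2 g)$ stays bounded away from zero near $\partial\Delta\setminus\mathcal V$ by (i), the only way for the integral to be finite is $M_1\in\mathcal V$. Also, your phrase ``unless $M$ has already been absorbed'' is misleading: the SDE has no absorbing states on $[t,1)$; the correct mechanism is convergence of the bounded martingale combined with the quadratic-variation bound.

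The more serious issue is in your optimality step. You apply It\^o to $v(s,M'_s)$ for an \emph{arbitrary} win-martingale $M'$ and then need $(1-\varepsilon)\,\mathbb E[g(M'_{1-\varepsilon})]\to 0$. This is a genuine gap: Theorem~\ref{thm:quantitive} provides estimates on $g$ and its Hessian, not on how fast $\mathbb E[g(M'_{1-\varepsilon})]$ grows for an arbitrary win-martingale whose diffusion coefficient has nothing to do with $\nabla^2 g$. In fact, nothing prevents $\mathbb E[g(M'_{1-\varepsilon})]$ from growing faster than $1/\varepsilon$. The paper avoids this entirely: the lower bound on the cost of any $M'$ is automatic because $v$ is \emph{defined} as the infimum, and the identification $v(t,x)=(1-t)g(x)+f(t)$ is already established by the viscosity-uniqueness machinery of Section~\ref{sec4} together with Corollary~\ref{cor:val_function_fg}. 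The only verification then needed is the one you also describe: that the Aldous martingale itself achieves $(1-t)g(x)+f(t)$. For \emph{that} martingale the passage $\varepsilon\to 0$ does work, because Theorem~\ref{thm:quantitive}(ii) bounds the quadratic variation of $g(M_s)+d\log(1-s)$, making it a true martingale; then $\mathbb E[g(M_{1-\varepsilon})]$ grows only like $d\log(1/\varepsilon)$ and $\varepsilon\,\mathbb E[g(M_{1-\varepsilon})]\to 0$. So the core of your argument is salvageable, but you should discard the It\^o-based lower bound for general $M'$ and replace it by the observation that $v$ is an infimum by definition (or equivalently invoke the uniqueness result for the HJB equation).
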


We are not aware of an explicit expression for $g(\cdot)=v(0,\cdot)$, other than for $d=1$. This makes Theorem \ref{thm:intro_existence} challenging, since the original one-dimensional arguments put forward in \cite{BBexciting} fail in the absence of an explicit diffusion coefficient for $M$.

\subsection{Connections to the literature}\label{sec:lite}

In the case of $d=1$, the optimization problem \eqref{eq:optimal} was solved in \cite{BBexciting} by a variational argument.  The first order condition yields an ODE for the optimal diffusion coefficient, which has an explicit solution. The authors then verified that the unique optimal win-martingale is characterized by the SDE
$$\textstyle dM_s=\frac{\sin(\pi M_s)}{\pi\sqrt{1-s}} \,dB_s.$$ 
Obviously we recover this result from \eqref{eq:Monge} and \eqref{eq:AMTG}, with $g(x)=\log(\pi^2/\sin^2(\pi x))$.

Another article closely related to ours is \cite{GuHoPoRe23} by Guo, Howison, Possama\"i and Reisinger, who also aim to address Aldous' question \cite{Al22b,Al22a} on the most random win-martingale / most exciting game via a PDE perspective. 
In contrast to us, that work is concerned with the one-dimensional case only. Furthermore, the win-martingales considered there are allowed to stop strictly before the terminal time. Mathematically this translates into introducing the first hitting time $\tau$ where the martingale leaves $\Delta_1:=(0,1)$, and yields a zero boundary condition for its HJB equation. Hence in that work the diffusion coefficient of the optimal win-martingale (and the value function) does not separate neatly into time- and space-dependent parts. As a result, in the context of \cite{GuHoPoRe23} it is not possible to reduce the HJB equation into an elliptic Monge-Amp\`ere equation.

As observed in \cite{BBexciting}, the objective function in \eqref{eq:optimal} is related to the so-called \emph{specific relative entropy}, which was introduced by Gantert \cite{Ga91} as the scaling limit of relative entropies. In \cite{BaBe24,BaUn23}, under given assumptions, the authors proved that the specific relative entropy of a $d$-dimensional martingale measure $\mathbb Q$ w.r.t.\ the Wiener measure  is given by 
\begin{align*}\textstyle
   \frac{1}{2} \mathbb E_{\mathbb Q} \left[\int_0^1 \{\Tr(\Sigma_s)-\log\det(\Sigma_s)-d \} \,ds \right].
\end{align*}
Since the expected integral of $\Tr(\Sigma_s) -d$ is independent of the choice of $\mathbb Q \in \mathcal{M}_{t,x}^{win}$, the optimization problem \eqref{eq:optimal} is akin to finding the $d$-dimensional win-martingale that minimizes the specific relative entropy w.r.t.\ Wiener measure. In dimension $d=1$ it was argued in \cite{BBexciting} that this specific relative entropy minimization problem over continuous win-martingales is associated to a scaling limit of Shannon entropy maximization problems over discrete-time win-martingales: This is the connection to Aldous original question. In recent times, the specific relative entropy has gained more visibility through the works of F\"ollmer \cite{Fo22b,Fo22a}, wherein e.g.\ a novel transport-information inequality has been obtained. Over the years, the concept of specific relative entropy between martingales has been rediscovered independently, e.g.\ in \cite{CoDo22} by Cohen and Dolinsky or in  \cite{Av01} by  Avellaneda, Friedman, Holmes and Samperi. The function  on covariance matrices $\Tr(\cdot)-\log\det(\cdot)-d$ has a long history in statistics (aka Stein loss  \cite{DeSr85,JaSt92}) and more recently machine learning (aka Log-Det divergence  \cite{Da07,KuMaDh06}).

Our problem \eqref{eq:optimal} belongs to the class of martingale optimal transport problems. Following \cite{BeHePe12, BoNu13,GaHeTo13,HoNe12},
martingale versions of the classical transport problem (see e.g.\ \cite{FiGl21,Sa15,Vi03, Vi09} for recent monographs) are often considered due to applications in  mathematical finance but admit further applications, e.g.\ to the Skorokhod problem \cite{BeCoHu14, BeNuSt19}.  In analogy to classical optimal transport, necessary and sufficient conditions for optimality have been established for martingale transport problems in discrete time (\cite{BeJu21, BeNuTo16}). However in continuous time there are very few instances where these problems can be solved in semi-explicit form; see e.g.\ \cite{BaBeHuKa20,BaLoOb24,BePaRi24,Lo18} for some exceptions. For a dual perspective to martingale optimal transport in continuous time see Huesmann and Trevisan's \cite{HuTr19}, or more generally Tan and Touzi's \cite{TaTo13}. These dual problems are optimization problems that range over a family of HJB equations, while in our case we have a single HJB equation. This is because the constraint of terminating at the vertices has been resolved by discovering the ``correct'' terminal condition of the value function, and thus no Lagrangian multiplier is needed. 

The Monge-Amp\`{e}re equation appears in various well-known problems, including the Weyl problem, the Minkowski problem, the existence of K\"{a}hler-Einstein metrics, the optimal transport problem; see e.g.\ \cite{Ca92,MC97,CNS1, Fi17, Le24}.  In \cite{CNS1}, Caffarelli, Nirenberg, and Spruck  investigated its Dirichlet problem in strictly convex domains of $\mathbb{R}^n$. For the infinite  boundary value problem as \eqref{eq:Monge}, Cheng-Yau \cite{Cheng-Yau1,Cheng-Yau2} initially analyzed this equation arising from complex geometry. Later, Guan and Jian \cite{GuJi04} established the existence and uniqueness of the  Monge-Amp\`{e}re equation with infinite boundary condition defined on general convex domain.  For further discussions on the infinite boundary value problem, see \cite{Trombetti,Jian,MoAh,XiangYang} and the  references therein. Additionally, real Monge-Amp\`{e}re equations on polytopes have been explored in complex geometry; see e.g.\ \cite{ ChenLiSheng2,ChenLiSheng1,ZhuWang}.

\subsection{Structure of the paper} In Section~\ref{sec2}, we briefly  describe a connection between Aldous martingales, moment measures, and a class of Langevin dynamics, which we think is of independent interest. The reader can skip it if they are only concerned with Theorems \ref{thm:quantitive}-\ref{thm:intro_existence}. In Section~\ref{sec3}, we collect some properties of the value function defined in \eqref{eq:optimal}, namely, its finiteness in Proposition~\ref{prop:finite}, the terminal-boundary condition in Lemma~\ref{lem:boundary-terminal}, and the convexity in Proposition~\ref{prop:convex}. In Section~\ref{sec4}, we derive the HJB equation and show that the value function \eqref{eq:optimal} is the unique viscosity solution to its corresponding HJB equation. In Section~\ref{sec5} and Section~\ref{sec6} we prove Theorem~\ref{thm:quantitive} and Theorem~\ref{thm:intro_existence} respectively.

\subsection{Notation}
\begin{itemize}
\item $\Delta\subset\mathbb R^d$ is the subprobability simplex in $d$-dimensions;

    \item $\{e_1,\dots,e_d\}$ is the canonical basis of $\mathbb R^d$, and $e_0:=0\in\mathbb R^d$;
    \item $\mathcal{V}=\{e_0,\dotso,e_d\}$ denotes the vertices of $\Delta$;
    \item $\bf 1$ is the vector of ones of length $d$;
    \item $\mathbb S^d$ denotes the set of $d \times d$ positive semi-definite matrices;
    \item $1_{d \times d}$ is the $d$-dimensional identity matrix;
    \item $\top$ denotes transpose;
    \item $\lambda_{min}$, $\Tr$, $\det$ denote minimal eigenvalue, trace, and determinant of matrices;
    
    \item $\mathcal{S}^{d-1}$ denotes the unit sphere in Euclidean space $\mathbb R^d$;
    \item $\#$ denotes the push-forward operator between a function and a measure.
\end{itemize}

\section{Connection to ``moment measures''}\label{sec2}

We adopt a variant of the notion of moment measure: For a convex function $\phi$, its ``moment measure'' is defined\footnote{For the necessary background on moment measures, see \cite{CEKl15,Fa19,Sa16} and the references therein. Usually the moment measure of a convex function $\phi$ is a finite measure $\mu^\phi$ satisfying $\mu^\phi=(\nabla \phi)_{\#}(e^{-\phi(x)}dx) $. In our case we do not have the minus sign in the exponent, and the measure is only sigma-finite. At a lack of a better name we will call these objects ``moment measures'', i.e.\  employ quotation marks.} to be 
\[\mu^{\phi}:=(\nabla \phi)_{\#} \left(e^{\phi(x)}\mathbbm{1}_{\{\phi(x) < +\infty\}}\,dx\right). \] 
For $\phi:= g$, and owing to the fact that $\nabla g$ is a diffeomorphism (see Lemma~\ref{lem:diffeomorphism}), it can be seen that the Monge-Amp\`{e}re Equation \eqref{eq:Monge} is equivalent to the condition $$\mu^{g}=2^{-d}\text{Leb}_{\mathbb{R}^d}.$$ For instance, for $d=1$, we have $g(x)=\log(\pi^2/\sin^2(\pi x))$. 
To simplify computations we pass now from the Aldous martingale $M$ defined in \eqref{eq:AMTG} to its time-changed version $Y_t:=M_{1-e^{-t}}$, so that now $t\in \mathbb R_+$. It can be verified that  
    \begin{align}\label{eq:Y_intro}\textstyle 
        dY_t= \sqrt{2(\nabla^2g(Y_t))^{-1}}\,dW_t, 
    \end{align}  
    for a $d$-Brownian motion $(W_t)_{t \in\mathbb R_+}$. By It\^{o}'s formula and taking gradient in \eqref{eq:Monge}, we have 
\begin{align*}
d \nabla g(Y_t)=& \textstyle  \sqrt{2 \nabla^2 g(Y_t) } \, dW_t+ \nabla^3 g(Y_t) \nabla^2 g(Y_t)^{-1} dt 
=\sqrt{2  \nabla^2 g^*( \nabla g (Y_t))^{-1}} \, dW_t + \nabla g(Y_t) \,dt, 
\end{align*}
with $g^*$ standing for the convex conjugate of $g$.
Hence if we define $X_t:=\nabla g(Y_t)$ we find 
\begin{align}\label{eq:generalized_repulsive_OU}\textstyle 
dX_t=X_t \,dt + \sqrt{ 2 (\nabla^2 g^*(X_t))^{-1}} \, d W_t.
\end{align}
An appealing property\footnote{We thank M.\ Fathi and S.\ Chewi, who explained to an author the connection between moment measures (i.e.\ without quotation marks) and processes akin to \eqref{eq:generalized_repulsive_OU} but with a minus sign in the drift. Such processes go under the name of Newton-Langevin diffusion in the literature and appear in sampling problems; see e.g.\ \cite{ChLGLuMaRiSt20,ZhPeFaPe20}.} of a ``moment measure'', such as $\mu^{g}=\frac{1}{2^d}\text{Leb}(\mathbb{R}^d)$, is that it is a sigma-finite invariant measure for the diffusion $X$. 
Indeed, it is sufficient to check that for any suitable test function $f$ 
\begin{align*}\textstyle 
\int \nabla f(x)^\top  x  \, d \mu^{g}= -\int \Tr[ \nabla^2 g^*(x)^{-1} \nabla^2 f(x)] \, d \mu^g. 
\end{align*}
By change of measure and integration by parts, this is indeed the case: 
\begin{align*}\textstyle 
\int \nabla f(x)^\top  x  \, d \mu^{g}
&=\textstyle \int \nabla f(\nabla g(x))^\top \nabla e^{g(x)} \,dx =- \int \Tr[   \nabla^2 f(\nabla g(x)) \nabla^2 g(x)]e^{g(x)} \, dx, \\
 &=\textstyle  - \int \Tr[ \nabla^2f(\nabla g(x)) \nabla^2 g^*(\nabla g(x))^{-1} ] e^{g(x)} \, dx \\
 &=\textstyle -\int \Tr[ \nabla^2 g^*(x)^{-1} \nabla^2 f(x)] \, d \mu^g, 
\end{align*}
where in the third equality we use that $\nabla^2 g(x) = (\nabla^2 g^* \circ \nabla g(x))^{-1}$.  As a consequence of the definition of $\mu^g$, we have that $$(\nabla g^{-1})_\#(\mu^{g})=  e^{g(x)}\mathbbm{1}_{\{g(x) < +\infty\}} \, dx,$$ and the r.h.s.\ is a sigma-finite invariant measure for $(Y_t)=(\nabla g^{-1}(X_t))$. In summary:

\begin{itemize}
\item  The martingale $Y$, given in \eqref{eq:Y_intro}, and the diffusion with drift $X$, given in \eqref{eq:generalized_repulsive_OU}, can be coupled so that  $\nabla g(Y_t)=X_t$;

\item $e^{g(x)}\mathbbm{1}_{\{g(x) < +\infty\}} \, dx$ is an invariant sigma-finite measure of $Y$;
\item $\mu^g=2^{-d}\text{Leb}_{\mathbb{R}^d}$ is an invariant sigma-finite measure of $X$.
\end{itemize}

In these arguments the simplex $\Delta$, equal to $\{g(x) < +\infty\}$, played no role. Dropping it, one would work on the domain $\{g(x) < +\infty\}$, provided  $\nabla g$ is a diffeomorphism on this set.

\section{On the value function }\label{sec3}

We begin this part with the precise definition of our value function: We define for $t\in[0,1)$ and $x\in \Delta$ the quantity $v(t,x)$ as the infimum of 
\[\textstyle \mathbb{E}_{{\mathbb P}} \left[\int_t^1  -\log \det(\Sigma_s) \, ds  \right],\]
taken over the class of all filtered probability spaces $({\Omega},{\mathcal F }, ({\mathcal F }_u)_{t \leq u\leq 1},{\mathbb P})$ supporting a $d$-dimensional Brownian motion $ B$ and a martingale $(M_u)_{t\leq u\leq 1}$  with $dM_u=\sigma_ud  B_u$,  $M_t=x$ and $M_1\in \mathcal{V}$ a.s., and where $ \Sigma_s:= \sigma_s\sigma_s^\top$. The flexibility of choosing the probability space will be useful for some arguments. On the other hand, it is not difficult\footnote{Indeed, we can take $\mathbb Q=\text{Law}(M)$. Then $\langle M\rangle_u = \int_0^u \bar \Sigma_s\circ M \,ds $, where for instance $\bar \Sigma_s:=\liminf_{n\to\infty} n\{\langle X \rangle_s - \langle X \rangle_{(s-1/n)_+}  \}$, with $X$ denoting the canonical process. Thus $\bar \Sigma\circ M$ is a version of $\Sigma$ (in the $d\mathbb P\times ds$-sense) and $\mathbb{E}_{{\mathbb P}} \left[\int_t^1  -\log \det(\Sigma_s) \, ds  \right] = \mathbb{E}_{\text{Law}(M)} \left[\int_t^1  -\log \det(\bar\Sigma_s) \, ds  \right]$. } to see that $v(t,x)$ is also equal to the infimum of $\mathbb{E}_{{\mathbb Q}} \left[\int_t^1  -\log \det(\Sigma_s) \, ds  \right]$ where now $\mathbb Q$ is a probability measure on the path-space of $\mathbb R^d$-valued continuous functions on $[t,1]$ such that the canonical process is a continuous martingale started at $x$ at time $t$ and taking values in $\mathcal{V}$ at time $1$. In this case $\Sigma$ denotes (a version of) the density of the quadratic covariation of the canonical process under $\mathbb Q$. Henceforth we can and will switch between the two formulations of the problem, and we will denote in either case by $\mathcal{M}^{win}_{t,x}$ the class over which we are minimizing.

We continue by showing elementary properties of the value function:

\begin{proposition}\label{prop:finite}
    For $(t,x)\in [0,1) \times \Delta$, we have $-\infty <v(t,x)<\infty$.
\end{proposition}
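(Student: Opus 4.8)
The plan is to prove the two bounds separately. The lower bound $v(t,x)>-\infty$ is the cheaper one: it follows from a Jensen/comparison argument controlling $\mathbb E\int_t^1 -\log\det(\Sigma_s)\,ds$ from below by a purely ``entropic'' quantity. Concretely, for any win-martingale $M\in\mathcal M^{win}_{t,x}$ on $[t,1]$, the terminal law $\mathrm{Law}(M_1)$ is a probability measure on $\mathcal V$ with barycenter $x$, which forces $\mathrm{Law}(M_1)$ to put mass $x_i$ on $e_i$ for $i=1,\dots,d$ and mass $1-\sum x_i$ on $e_0$. I would then invoke (a multidimensional version of) the fact that the expected accumulated log-determinant of the quadratic variation of a continuous martingale ending in a fixed discrete law is bounded below by a dimensional constant times the Shannon entropy of that law — this is exactly the specific-relative-entropy lower bound alluded to in the introduction (the function $\Tr(\cdot)-\log\det(\cdot)-d$ being nonnegative, and $\Tr(\Sigma_s)$ being integrable with a fixed integral for any win-martingale since $\langle M\rangle_1-\langle M\rangle_t$ has fixed expectation determined by $x$). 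Combined, these give $\mathbb E\int_t^1-\log\det(\Sigma_s)\,ds \geq -C_d - 2H(\mathrm{Law}(M_1)) > -\infty$, uniformly over $M$; taking the infimum preserves the bound. (Alternatively, one can bound $\langle M^i\rangle_1-\langle M^i\rangle_t\leq 1$ pathwise, hence $\mathbb E\,\Tr(\langle M\rangle_1-\langle M\rangle_t)\leq d$, and then use $-\log\det\Sigma_s\geq -\Tr\Sigma_s + d - [\Tr\Sigma_s-\log\det\Sigma_s-d]$; since the bracket is $\geq 0$ one only needs $\mathbb E\int_t^1\Tr\Sigma_s\,ds<\infty$, which is immediate, to conclude the lower bound, modulo handling the sign correctly.)

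For the upper bound $v(t,x)<\infty$ one must exhibit a single admissible win-martingale $M$ starting from $x$ at time $t$ with $\mathbb E\int_t^1-\log\det(\Sigma_s)\,ds<\infty$. The natural candidate is a product construction exploiting the known one-dimensional solution: reparametrize the simplex $\Delta$ via the ``stick-breaking'' change of coordinates that writes $(x_1,\dots,x_d)$ in terms of $d$ parameters $p_1,\dots,p_d\in(0,1)$, run $d$ independent copies of the one-dimensional optimal Aldous martingale $dP^j_s=\frac{\sin(\pi P^j_s)}{\pi\sqrt{1-s}}\,dB^j_s$ started appropriately, and push them forward to $\Delta$ through the coordinate map. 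One then checks: (a) the resulting process is a genuine martingale on $\Delta$ — this requires the pushforward map to be affine in each coordinate at the right level, which the stick-breaking parametrization provides; (b) it hits $\mathcal V$ at time $1$ a.s. — inherited from each $P^j_1\in\{0,1\}$; (c) its diffusion matrix $\Sigma_s$ is absolutely continuous and $\mathbb E\int_t^1-\log\det(\Sigma_s)\,ds<\infty$ — here $\det\Sigma_s$ factors (up to bounded Jacobian terms) into $\prod_j \big(\sin(\pi P^j_s)/(\pi\sqrt{1-s})\big)^2$, so the integral reduces to $d$ copies of the one-dimensional finiteness already established in \cite{BBexciting}, i.e.\ $\mathbb E\int_t^1 -\log\big(\sin^2(\pi P^j_s)/(\pi^2(1-s))\big)\,ds<\infty$, plus a $\log$-Jacobian contribution that is bounded on $\Delta$. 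A cruder alternative, if the product construction's martingale property proves delicate: build the win-martingale in two stages — first a deterministic/simple move to a convenient interior point, then a Brownian-type bridge to $\mathcal V$ run on $[t,1-\varepsilon]$ with a known explicit coefficient, and note any finite-energy win-martingale suffices.

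The main obstacle I anticipate is the upper bound, specifically verifying that the pushforward of independent one-dimensional win-martingales through the stick-breaking map is itself a martingale taking values in $\mathcal V$, and cleanly controlling the $\log\det$ of its (degenerate-near-the-boundary) diffusion matrix. The degeneracy of $\Sigma_s$ as $M_s$ approaches $\partial\Delta$ makes $-\log\det\Sigma_s$ blow up, so the crux is that this blow-up is integrable in expectation — which is precisely the content of the one-dimensional computation in \cite{BBexciting}, but one must check the coordinate change does not destroy integrability (it won't, since the Jacobian of stick-breaking is bounded above and below away from the vertices, and near the vertices the $\log$-Jacobian singularity is mild compared to what is already controlled). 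The lower bound, by contrast, should be essentially a one-line consequence of nonnegativity of the Stein loss together with the a priori bound $\mathbb E\,\Tr(\langle M\rangle_1 - \langle M\rangle_t) = \sum_i x_i(1-x_i) \le d/4$.
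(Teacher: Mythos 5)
Your lower bound matches the paper's exactly: Stein-loss nonnegativity $-\log\det\Sigma \geq -\Tr\Sigma + d$ plus the a priori bound on $\mathbb E\,\Tr(\langle M\rangle_1-\langle M\rangle_t)$. Your upper bound is also, at heart, the same construction the paper uses: the paper's inductive step $Y_t := M_t(X^1_t,\dots,X^d_t,1-\mathbf 1^\top X_t)$ with $M$ a one-dimensional win-martingale and $X$ a $d$-dimensional one, when unrolled, is exactly the stick-breaking parametrization you propose, and both reduce the $\det$ to a product of $d$ scalar pieces together with a correction term. The paper executes this via a block-determinant / Sylvester computation rather than a Jacobian factorization, but that is a cosmetic difference.

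Two points where your sketch is too hasty. First, you write that the ``$\log$-Jacobian contribution [...] is bounded on $\Delta$'' — this is false: the stick-breaking Jacobian determinant is $\prod_i (1-p_i)^{d-i}$, so its logarithm blows up near the faces $\{p_i=1\}$. You then half-correct this by saying the singularity is ``mild compared to what is already controlled,'' which is true but needs an argument: one must show $\mathbb E\int_0^1 -\log(1-P^i_t)\,dt <\infty$ for each factor. This is precisely why the paper does not cite the $\sin$-coefficient Aldous martingale from \cite{BBexciting} as its building block (which would supply only $\mathbb E\int -\log\det\Sigma\,dt<\infty$), but instead uses the explicit one-dimensional win-martingale $dM_t=\sqrt{2/(1-t)}\,M_t(1-M_t)\,dB_t$ from \cite{BaZh24}, for which it verifies all three integrability statements $\mathbb E\int -\log M_t\,dt$, $\mathbb E\int -\log(1-M_t)\,dt$, $\mathbb E\int-\log\sigma_t^2\,dt <\infty$ by a direct It\^o computation. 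With the Aldous coefficient one could still conclude, since $-\log(1-p) \leq \tfrac12(-\log\sin^2(\pi p))+C$ on $(0,1)$, but you would need to say this; as written, the step is a gap. Second, your independence-based martingale-property check is fine, but it is exactly what the paper does (coupling $M$ and $X$ independently); there is no saving there.

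In summary: correct approach, essentially identical to the paper's, with the log-Jacobian integrability being the one spot that needs the same extra work the paper does explicitly.
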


\begin{proof}
    We prove this by induction on the dimension $d$, so it will be convenient to write $\Delta_d\subset \mathbb R^d$ rather than just $\Delta$ and $v^d$ rather than just $v$. The bound $-\infty <v^d(t,x)$ is trivial. Indeed according to the strict convexity of $\mathbb S^d \ni \Sigma \mapsto \Tr(\Sigma)-\log\det \Sigma -d$, a variational argument shows that the unique minimizer of this function is the identity matrix $I_{d \times d}$, so 
    \begin{align}\label{eq:logdetbound}
    -\log\det \Sigma \geq -\Tr \Sigma+d, \quad \, \forall \, \Sigma \in \mathbb S^d.
    \end{align}
    The r.h.s.\ has a finite constant value when integrated over win-martingales started at $x$, and hence $v^d(t,x)$ is bounded from below. In the following we show $v^d(t,x)<\infty$. W.l.o.g.\ we can assume that $t=0$.

    For $d=1$ we know this already by \cite{BBexciting}. However, for the inductive step, it will be helpful to exhibit a feasible one-dimensional win-martingale with certain pleasant properties. Given $x\in\Delta_1$ we let the $1$-dimensional martingale $M$ to be determined via $M_0=x$ and $dM_t=\sqrt{\frac{2}{1-t}}M_t(1-M_t)dB_t $. According to \cite{BaZh24}, $M$ is a win-martingale, i.e., $M \in \mathcal{M}_{0,x}^{win}$. We now show the finiteness of the following expectations:
    \begin{align}\label{eq:example_fav_finiteness}
     \textstyle    \mathbb E\left[\int_0^1-\log(M_t) dt \right],\, \mathbb E\left[\int_0^1 -\log(1-M_t) dt \right],\, \mathbb E\left[\int_0^1-\log\left(\sigma^2_t\right)dt,  \right] 
    \end{align}
    with $\sigma_t^2:= \frac{2}{1-t}M_t^2(1-M_t)^2$. Clearly it suffices to check the finiteness of the first of these. By It\^{o}'s formula we have
    \[\textstyle -d \log M_t =-\sqrt{\frac{2}{1-t}}(1-M_t)dB_t +\frac{1}{1-t}(1-M_t)^2dt .\]
    We remark that the local martingale part is a true martingale for $t\in [0,1-\epsilon]$ and $0<\epsilon<1$ arbitrary, since the integrand is then bounded. Hence
    \begin{align*}
    (1-\epsilon)\log(x) &\textstyle +    \mathbb E\left[\int_0^{1-\epsilon}-\log(M_t) dt \right] =   \mathbb E\left[\int_0^{1-\epsilon}\int_0^t  \frac{(1-M_s)^2}{1-s}dsdt \right] \\
    &= \textstyle  \mathbb E\left[\int_0^{1-\epsilon}\int_s^{1-\epsilon}  \frac{(1-M_s)^2}{1-s}dtds \right] \leq \mathbb E\left[\int_0^{1}\int_s^{1}  \frac{(1-M_s)^2}{1-s}dtds \right]
    \\&\textstyle = \mathbb E\left[\int_0^{1}(1-M_s)^2ds \right] \leq 1.
    \end{align*}
    To conclude we send $\epsilon \to 0$, and apply monotone convergence by observing that $-\log(M_t)$ is lower bounded by the integrable process $-M_t+1$. 

    We now assume that we have proved $v^d(0,x)<\infty$ for every $x\in\Delta_d$, and we want to show that for every $y\in\Delta_{d+1}$ there is a win-martingale started at $y$ with instantaneous covariance $\bar\Sigma$ such that $\mathbb E[\int_0^1 -\log\det(\bar\Sigma_t)dt]<\infty$. We write $x_i:=y_i/(\sum_{k\leq d+1}y_k)$ for $i\leq d$, and observe that $x\in\Delta_d$. We let $X$ be a win-martingale starting from $x$, with instantaneous covariance $\Sigma_t$, such that $\mathbb E[\int_0^1 -\log\det(\Sigma_t)dt]<\infty$, whose existence is guaranteed by the inductive hypothesis. We also let $M$ be a win-martingale starting from $\sum_{k\leq d+1}y_k$, with volatility $\sigma^2$ satisfying $-\infty <\mathbb E[\int_0^1 -\log(\sigma^2_t)dt]<\infty$ and $-\infty< \mathbb E[\int_0^1- \log(M_t)dt]<\infty$; see \eqref{eq:example_fav_finiteness} for the sake of concreteness. We couple $M$ and $X$ independently and define 
    \[Y_t:= M_t(X^1_t,\dots,X^d_t,1-\mathbf 1^{\top} X_t)^\top,\]
    where we denote by $\top$ the transpose and by $\mathbf 1$ the column vector of ones with length $d$. One readily checks that $Y_0=y$ and that $Y_1$ takes values on the vertices of $\Delta_{d+1}$. By independence of $X$ and $M$, we see that $Y$ is also a martingale. Thus it is a win-martingale started at $y$. Denoting its instantaneous covariance by $\bar\Sigma$, one finds
    \begin{align*}
        \bar\Sigma_t^{i,j}&=M_t^2\Sigma_t^{i,j} + X^i_tX^j_t\sigma^2_t, \, \text{for }i,j\leq d;
\\         \bar\Sigma_t^{i,d+1}&=-M_t^2 \mathbf 1^{\top}\Sigma_t^{\cdot,i} + X^i_t(1-\mathbf 1^{\top}X_t)\sigma^2_t, \, \text{for }i\leq d;\\
\bar \Sigma_t^{d+1,d+1} &= M_t^2 \mathbf 1^{\top}\Sigma \mathbf 1 + (1-\mathbf 1^{\top}X_t)^2\sigma^2_t.
    \end{align*}
    Let as call $v_t^{\top}:=-M_t^2 \mathbf 1^{\top}\Sigma_t + X_t^{\top}(1-\mathbf 1^{\top}X_t)\sigma^2_t $ and $A_t:=M_t^2\Sigma_t + X_tX_t^{\top}\sigma_t^2 $, so that 
    \[v_t^{\top}= X_t^{\top}\sigma_t^2 - \mathbf 1^{\top} A_t,\]
    as well as 
    \[\bar\Sigma_t^{d+1,d+1} = \mathbf 1^{\top} A_t \mathbf 1 + \sigma_t^2- 2\sigma_t^2 \mathbf 1^{\top} X_t. \]
    With this notation,     $\bar\Sigma_t$ admits the block form
    $$ \bar\Sigma_t =   \left[ 
    \begin{array}{c|c} 
      A_t & v_t \\ 
      \hline 
      v_t^{\top} &  \bar \Sigma_t^{d+1,d+1}
    \end{array} 
    \right] = 
    \left[ 
    \begin{array}{c|c} 
      A_t & X_t\sigma_t^2 -  A_t\mathbf 1 \\ 
      \hline 
      X_t^{\top}\sigma_t^2 - \mathbf 1^{\top} A_t &  \mathbf 1^{\top} A_t \mathbf 1 + \sigma_t^2- 2\sigma_t^2 \mathbf 1^{\top} X_t
    \end{array} 
    \right] 
    . $$
    As $A_t$ is necessarily invertible, by properties of the determinant we get
    \[\det \bar\Sigma_t = \det A_t \cdot \{\bar\Sigma_t^{d+1,d+1} - v_t^{\top} A_t^{-1}  v_t\} =  \det A_t \cdot\{\sigma_t^2-\sigma_t^4 X_t^{\top}A_t^{-1}X_t\}  .\]
    Since $X_tX_t^{\top}$ is positive semidefinite, by properties of the determinant we get 
    \[-\log\det A_t \leq -\log\det(M_t^2\Sigma_t) = -2d\log(M_t) - \log\det \Sigma_t, \]
    whose r.h.s.\ is integrable. Thus everything boils down to showing
    \begin{align}
    \label{eq:interim_goal}\textstyle 
    \mathbb E\left[ \int_0^1 - \log \{1-\sigma_t^2 X_t^{\top}A_t^{-1}X_t\}dt\right ] <\infty,
    \end{align}
for which we have used the fact that $\log \sigma_t^2$ is integrable. 

According to Sylvester's determinant theorem, 
\begin{align*}
    1-\sigma_t^2 X_t^{\top}A_t^{-1}X_t = \det \left(I_{d\times d} - \sigma_t^2 X_t X_t^{\top}A_t^{-1} \right)=\det (A_t - \sigma_t^2 X_t X_t^{\top} )/\det A_t.
\end{align*}
Due to the definition of $A_t$, $A_t - \sigma_t^2 X_t X_t^{\top}=M_t^2\Sigma_t$, and by the independence of $M$ and $X$, $\mathbb E \left[\int_0^1 -\log\det(M_t^2\Sigma_t) \, dt \right]<+\infty$. Therefore \eqref{eq:interim_goal} is equivalent to that 
$\mathbb E \left[ \int_0^1 \log \det A_t \, dt \right]< \infty.$
Noting that $M_t^2 \leq 1$ and $\Tr(X_tX_t^{\top})\leq 1$, and  invoking \eqref{eq:logdetbound}, we conclude that 
\begin{align*}\textstyle 
    \mathbb E \left[ \int_0^1 \log \det A_t \, dt \right] \leq \mathbb E \left[ \int_0^1  \left( \Tr A_t -d \right)  dt \right] \leq \mathbb E \left[ \int_0^1 \left( \Tr \Sigma_t + \sigma_t^2 \right) dt \right]-d,
\end{align*}
where the last term is finite since $M$ and $X$ are bounded martingales.

\end{proof}

 The next result shows that $v(t,x)=(1-t)v(0,x)+d (1-t) \log(1-t)$. As mentioned in the introduction, $g(x):=v(0,x)$ satisfies the Monge-Amp\`{e}re equation \eqref{eq:Monge} and will be studied in detail in later sections.

\begin{lemma}\label{lem:boundary-terminal}

For any $t,s \in [0,1)$, $x \in \Delta$, we have the equality
    \begin{align*}
        \frac{1}{1-s}v(s,x)-d \log (1-s)= \frac{1}{1-t} \,  v(t,x)-d \log(1-t).
    \end{align*}
Moreover, for any $(t^0,x^0) \in [0,1) \times \pa \Delta $, 
\[ \lim\limits_{(t,x)\to (t^0,x^0)} v(t,x)= +\infty.\]
\end{lemma}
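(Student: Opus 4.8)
The plan is to establish the scaling identity first and then derive the boundary blow-up as a consequence. For the scaling identity, I would fix $x\in\Delta$ and show that the map $s\mapsto \frac{1}{1-s}v(s,x)-d\log(1-s)$ is constant by exhibiting a bijection between $\mathcal{M}^{win}_{s,x}$ and $\mathcal{M}^{win}_{t,x}$ under which the objective transforms in the predicted way. Concretely, given a win-martingale $(M_u)_{u\in[t,1]}\in\mathcal{M}^{win}_{t,x}$ with instantaneous covariance $\Sigma_u$, I would define a time change $u=\phi(r)$ mapping $[s,1]$ onto $[t,1]$ affinely in the "remaining time" variable, i.e. $1-\phi(r)=\frac{1-t}{1-s}(1-r)$, and set $N_r:=M_{\phi(r)}$. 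Then $N\in\mathcal{M}^{win}_{s,x}$ (it starts at $x$, ends in $\mathcal V$, and has absolutely continuous quadratic variation), its instantaneous covariance is $\widetilde\Sigma_r=\phi'(r)\Sigma_{\phi(r)}=\frac{1-t}{1-s}\Sigma_{\phi(r)}$, and a change of variables in the time integral gives
\begin{align*}
\mathbb E\left[\int_s^1 -\log\det(\widetilde\Sigma_r)\,dr\right]
&=\mathbb E\left[\int_s^1\Big(-\log\det(\Sigma_{\phi(r)})-d\log\tfrac{1-t}{1-s}\Big)\,dr\right]\\
&=\frac{1-s}{1-t}\,\mathbb E\left[\int_t^1 -\log\det(\Sigma_u)\,du\right]-d(1-s)\log\tfrac{1-t}{1-s}.
\end{align*}
Taking the infimum over $M$ (the correspondence is a bijection, so infima match) yields $v(s,x)=\frac{1-s}{1-t}v(t,x)-d(1-s)\log\frac{1-t}{1-s}$, which rearranges to the claimed equality. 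I should note the finiteness from Proposition~\ref{prop:finite} guarantees all quantities are finite, so the rearrangement is legitimate.

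For the boundary statement, fix $(t^0,x^0)\in[0,1)\times\partial\Delta$ and take $(t,x)\to(t^0,x^0)$. By the scaling identity it suffices to show $v(0,x)\to+\infty$ as $x\to x^0\in\partial\Delta$, since $(1-t)$ stays bounded away from $0$ near $t^0<1$ and $d(1-t)\log(1-t)$ is bounded. The key is the lower bound \eqref{eq:logdetbound}, which gives $-\log\det\Sigma_s\geq -\Tr\Sigma_s+d$, but this alone only yields a finite lower bound; to force divergence I would instead use that for a win-martingale started near $\partial\Delta$, some coordinate $M^i$ (or the coordinate $1-\mathbf 1^\top M$) must travel from a value $\varepsilon$ close to $0$ up to $1$ with positive probability, which is expensive in the entropic cost. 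Precisely, projecting onto an appropriate coordinate reduces to the one-dimensional estimate: if $N$ is a one-dimensional win-martingale on $[0,1]$ started at $\varepsilon$, then $\mathbb E[\int_0^1 -\log(\sigma^2_s)\,ds]\to+\infty$ as $\varepsilon\to 0$, and this transfers to a lower bound on $v(0,x)$ via the block/Jensen arguments already used in the proof of Proposition~\ref{prop:finite}. Alternatively, and more cleanly, I would invoke the viscosity-solution characterization and a barrier: since \eqref{eq:logdetbound} shows $v$ is a supersolution of a linear equation whose only obstruction to boundedness is the boundary, one compares with an explicit barrier that blows up at $\partial\Delta$.

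The main obstacle I anticipate is the boundary blow-up rather than the scaling identity: the bound \eqref{eq:logdetbound} is not by itself enough, and one genuinely needs a quantitative "it costs a lot of entropy to push a win-martingale out of a thin slab near the boundary" estimate. I would handle this by a one-dimensional reduction — fixing a supporting hyperplane of $\Delta$ at $x^0$, writing $\ell(x)$ for the affine distance to it, noting $\ell(M_s)$ is a one-dimensional martingale started at $\ell(x)=O(\operatorname{dist}(x,\partial\Delta))$ and absorbed at $\{0\}\cup\{\text{positive values}\}$, and then using that the logarithmic cost of such a martingale diverges as its starting point tends to $0$, which is essentially the content of the finiteness computation \eqref{eq:example_fav_finiteness} read in reverse together with Jensen's inequality applied to $-\log\det$. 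Monotone convergence as in the proof of Proposition~\ref{prop:finite} then upgrades this to the stated limit along the full sequence $(t,x)\to(t^0,x^0)$.
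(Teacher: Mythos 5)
Your treatment of the scaling identity is essentially identical to the paper's: you build the same affine time change in remaining time $1-\phi(r)=\frac{1-t}{1-s}(1-r)$, observe that it carries $\mathcal{M}^{win}_{t,x}$ bijectively onto $\mathcal{M}^{win}_{s,x}$, compute how the cost transforms, and take infima. That part is fine.

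The boundary blow-up is where you depart from the paper, and where your plan has a real gap. You propose a one-dimensional reduction: project $M$ onto a supporting direction $\ell$, obtain a one-dimensional win-martingale started near $0$, and argue its entropic cost diverges. The problem is that the quantity you control in one dimension is $\mathbb E[\int -\log(\ell^\top\Sigma_s\ell)\,ds]$, whereas the objective is $\mathbb E[\int -\log\det\Sigma_s\,ds]$, and there is no immediate pointwise inequality between $-\log\det\Sigma$ and $-\log(\ell^\top\Sigma\ell)$. To make your route work you would need a Schur-complement bound of the form $\det\Sigma\le\det A\cdot\ell^\top\Sigma\ell$ (with $A$ the covariance of the orthogonal block) together with a uniform control on $\mathbb E[\int\log\det A\,ds]$, none of which you write down; as stated, ``transfers to a lower bound \dots via the block/Jensen arguments'' is a step that would need substantial work. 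The paper sidesteps all of this with a single, direct application of Jensen in $d$ dimensions: from the martingale property and the constraint $M_1\in\mathcal V$ one computes exactly
\[
\mathbb E_{\mathbb Q}\Big[\int_t^1\Sigma_s\,ds\Big]=Diag(x_1,\dots,x_d)-xx^\top,
\]
then concavity of $\log\det$ on $\mathbb S^d$ gives
\[
\mathbb E_{\mathbb Q}\Big[\int_t^1-\log\det\Sigma_s\,ds\Big]\ge d(1-t)\log(1-t)-(1-t)\log\det\big(Diag(x_1,\dots,x_d)-xx^\top\big),
\]
uniformly over $\mathbb Q\in\mathcal M^{win}_{t,x}$, and finally one checks that $\det(Diag(x)-xx^\top)\to 0$ as $x\to\partial\Delta$ (a null coordinate kills a row/column; if no coordinate is null one has $\sum_ix_i=1$ and Sherman--Morrison gives determinant zero). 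This is shorter, avoids any projection, and gives the blow-up directly for all $(t,x)\to(t^0,x^0)$ without needing the scaling identity as an intermediary. I would replace your one-dimensional reduction with this argument.
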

\begin{proof} 
    For any $\epsilon>0$, suppose $(M^t_u)_{u \in [t,1]}$ with distribution $\mathbb Q^t \in \mathcal{M}_{t,x}^{win}$ is a win-martingale, whose density of quadratic variation we denote $\Sigma^t$, such that 
\begin{align*}\textstyle 
    v(t,x) \leq \E_{\mathbb Q^t}\left[\int_t^1 - \log \det \Sigma^t_u \, du \right] <v(t,x)+\epsilon. 
\end{align*}
Let us take $M^s_u= M^t_{t+(u-s)(1-t)/(1-s)}$ for $u \in [s,1]$, and denote its distribution by $\mathbb Q^s$ and the density of the quadratic variation by $\Sigma^s$. Therefore
\begin{align*}
v(s,x) \leq & \textstyle \E_{\mathbb Q^s} \left[\int_s^1 -\log \det \Sigma^s_u \,du \right]= \E_{\mathbb Q^s} \left[\int_s^1 -\log \det\left (\frac{1-t}{1-s} \Sigma^t_{t+(u-s)(1-t)/(1-s)} \right ) \,du \right]\\
=&\textstyle \frac{1-s}{1-t} \, \E_{\mathbb Q^t}\left[- \int_t^1 \log\det \left( \frac{1-t}{1-s} \Sigma^t_{u} \right) \,du \right] \\
=& \textstyle d(1-s) \log\left(\frac{1-s}{1-t}\right)+\frac{1-s}{1-t} \, \E_{\mathbb Q^t}\left[-\int_t^1 \log \det \Sigma^t_u \, du \right] \\
\leq &\textstyle  d(1-s) \log\left(\frac{1-s}{1-t}\right)+ \frac{1-s}{1-t} \, ( v(t,x)+\epsilon).
\end{align*}
Letting $\epsilon \to 0$, we get half of the equality. Exchanging the roles of $t$ and $s$, we finish proving the first claim.

    For any $\mathbb Q \in \mathcal{M}_{t,x}^{win}$, the martingale property yields that 
   \begin{align*}\textstyle 
       \E_{\mathbb Q}\left[\int_t^1  \Sigma_s \,ds  \right]= Diag(x_1,\dotso,x_d)- xx^\top ,
   \end{align*}
    where $Diag(x_1,\dotso,x_d)$ denotes the $d$-dimensional diagonal matrix with the $(i,i)$-entry being $x_i$ for $i=1,\dotso, d$. Recall that $\mathbb S^d \ni \Sigma \mapsto \log \det \Sigma$ is concave. We get a lower bound, for any $\mathbb Q \in \mathcal{M}^{win}_{t,x}$
    \begin{align*}\textstyle 
        \E_{\mathbb Q} \left[\int_t^1 -\log \det \Sigma_s \,ds \right] \geq &\textstyle  -(1-t)\log \det \E_{\mathbb Q} \left[\frac{1}{1-t}\int_t^1 \Sigma_s \,ds \right] \\
        =&\textstyle  d(1-t)\log(1-t)-(1-t) \log \det \E_{\mathbb Q}\left[\int_t^1  \Sigma_s \,ds  \right] \\
        \geq &\textstyle  d(1-t)\log(1-t)-(1-t)\log \det \left(Diag(x_1,\dotso,x_d)-x x^\top  \right).
    \end{align*}
    Noticing that $$\lim_{x \to x^0 \in \pa\Delta}\log \det \left(Diag(x_1,\dotso,x_d)-x x^\top  \right)=-\infty,$$ we would finish the proof of the second claim. To this aim, observe that if $x^0$ has a null coordinate, say coordinate $i$, then for the matrix $Diag(x^0_1,\dotso,x^0_d)-x^0( x^0)^\top$ its $i$-th row and column are entirely null, and therefore this matrix is non-invertible. If on the other hand $x^0$ has no null coordinates, then $\sum_i x^0_i=1$, and so by the Sherman-Morrison formula we have
    \begin{align*}
    &\det \left(Diag(x^0_1,\dotso,x^0_d)-x^0 (x^0)^\top  \right)  \\
    &= \det Diag(x^0_1,\dotso,x^0_d)  \times [1-(x^0)^\top Diag(1/x^0_1,\dotso,1/x^0_d) x^0 ],
    \end{align*}
    and the latter factor is zero.
\end{proof}

The following result will be used in the next section to show that the value function is the unique \emph{convex} viscosity solution to its HJB equation.

\begin{proposition}\label{prop:convex}
    For $t\in [0,1)$ fixed, the function $\Delta \ni x\mapsto v(t,x)$ is a finite convex function.
\end{proposition}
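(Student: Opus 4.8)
Finiteness of $x\mapsto v(t,x)$ is already granted by Proposition~\ref{prop:finite}, so the task is to prove convexity, and I will do so by a direct construction (of independent flavour from the verification arguments later in the paper). Fix $x_0,x_1\in\Delta$ and $\lambda\in(0,1)$, set $x_\lambda:=(1-\lambda)x_0+\lambda x_1$, and let $\epsilon>0$. The plan is ``mix first, commit later''. Take $\epsilon$-optimal win-martingales $M^0\in\mathcal{M}^{win}_{t,x_0}$ and $M^1\in\mathcal{M}^{win}_{t,x_1}$, independent on a common space, with quadratic covariation densities $\Sigma^{M^0},\Sigma^{M^1}$. On a short interval $[t,t+\delta]$ introduce, independently of $(M^0,M^1)$, a continuous $(0,1)$-valued martingale $P$ with $P_t=\lambda$, $P_{t+\delta}\in\{0,1\}$ and absolutely continuous quadratic variation (for instance a time-changed one-dimensional win-martingale, as built in the proof of Proposition~\ref{prop:finite}; note $\mathbb{P}(P_{t+\delta}=1)=\lambda$ by the martingale property). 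Put $\xi:=P_{t+\delta}$ and define
\[
Z_s:=\begin{cases}(1-P_s)M^0_s+P_sM^1_s,&s\in[t,t+\delta],\\ M^{\xi}_s,&s\in(t+\delta,1],\end{cases}
\]
which starts deterministically at $x_\lambda$, ends in $\mathcal V$ since $Z_1=M^{\xi}_1$, and runs the convex combination of the two candidates until the ``coin'' $\xi$ is revealed, after which it follows the selected one.

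The first thing to establish is that $Z$ is genuinely a win-martingale from $x_\lambda$. On $[t,t+\delta]$, mutual independence of $P,M^0,M^1$ kills all cross-brackets and gives $dZ_s=(1-P_s)\,dM^0_s+P_s\,dM^1_s+(M^1_s-M^0_s)\,dP_s$; hence there $Z$ is a continuous martingale with absolutely continuous quadratic covariation, of density $\Sigma^Z_s=(1-P_s)^2\Sigma^{M^0}_s+P_s^2\Sigma^{M^1}_s+(M^1_s-M^0_s)(M^1_s-M^0_s)^\top\dot{\langle P\rangle}_s$. The delicate point is the junction at $t+\delta$: because $P$ is continuous and hits $\{0,1\}$ precisely there, $Z_{(t+\delta)^-}=M^{\xi}_{t+\delta}=Z_{t+\delta}$, so $Z$ has continuous paths and $\langle Z\rangle$ acquires no atom at $t+\delta$; on $(t+\delta,1]$ it coincides with the continuous martingale $M^\xi$, and a short conditioning computation (again using independence of $P,M^0,M^1$ and the martingale property of each) confirms that $Z$ is a martingale on all of $[t,1]$ in the joint filtration. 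I expect this verification --- that $Z\in\mathcal M^{win}_{t,x_\lambda}$, and in particular that $\langle Z\rangle$ stays absolutely continuous --- to be the step requiring the most care; it works precisely because the ``commitment'' at $t+\delta$ is arranged to be continuous rather than a jump.

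The cost estimate is the easier half. Conditioning on $\xi$ (independent of $(M^0,M^1)$), the tail $\mathbb{E}\big[\int_{t+\delta}^1-\log\det\Sigma^Z_s\,ds\big]$ equals $(1-\lambda)\mathbb{E}\big[\int_{t+\delta}^1-\log\det\Sigma^{M^0}_s\,ds\big]+\lambda\mathbb{E}\big[\int_{t+\delta}^1-\log\det\Sigma^{M^1}_s\,ds\big]$, which converges to $(1-\lambda)$ and $\lambda$ times the costs of $M^0,M^1$ as $\delta\downarrow0$. For the short leg, drop the rank-one term to get $\Sigma^Z_s\succeq(1-P_s)^2\Sigma^{M^0}_s+P_s^2\Sigma^{M^1}_s$, which dominates $\tfrac14\Sigma^{M^0}_s$ if $P_s\le\tfrac12$ and $\tfrac14\Sigma^{M^1}_s$ if $P_s>\tfrac12$; hence $-\log\det\Sigma^Z_s\le d\log 4+(-\log\det\Sigma^{M^0}_s)^++(-\log\det\Sigma^{M^1}_s)^+$, and this dominating function lies in $L^1(d\mathbb P\times ds)$ because $M^0,M^1$ have finite cost and, by \eqref{eq:logdetbound}, $\log\det\Sigma^{M^i}\le\Tr\Sigma^{M^i}$ with $\mathbb{E}\big[\int_t^1\Tr\Sigma^{M^i}_s\,ds\big]<\infty$ for the bounded martingale $M^i$. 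Thus the short leg contributes $o(1)$ as $\delta\downarrow0$, so admissibility of $Z$ gives $v(t,x_\lambda)\le(1-\lambda)\,\mathbb{E}\big[\int_t^1-\log\det\Sigma^{M^0}_s ds\big]+\lambda\,\mathbb{E}\big[\int_t^1-\log\det\Sigma^{M^1}_s ds\big]+o(1)<(1-\lambda)v(t,x_0)+\lambda v(t,x_1)+\epsilon+o(1)$; sending $\delta\downarrow0$ and then $\epsilon\downarrow0$ yields convexity. (As a byproduct, the same construction shows the intuitive fact that starting the game from a point is never worse than from a distribution on $\Delta$ with the same barycentre.)
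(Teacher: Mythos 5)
Your proof is correct, but it takes a genuinely different route from the paper's. Where you couple $M^0,M^1$ \emph{independently} and use a ``mix-then-commit'' construction --- a mixing martingale $P$ on a short window $[t,t+\delta]$ that resolves to a 0/1 coin $\xi$, after which $Z$ runs $M^\xi$ alone --- the paper instead couples $\bar M$ and $\hat M$ to be driven by the \emph{same} Brownian motion $B$ (while a one-dimensional win-martingale $p$, independent of $(\bar M,\hat M,B)$, mixes throughout $[t,1]$). That common-driver coupling makes the instantaneous covariance of $M=p\bar M+(1-p)\hat M$ contain the term $(p\bar\sigma+(1-p)\hat\sigma)^2$ (a genuine matrix square of a convex combination), and concavity of $\log\det$ then gives $\log\det\Sigma_s\ge p_s\log\det\bar\Sigma_s+(1-p_s)\log\det\hat\Sigma_s+r_s$ with $r_s\ge0$ pointwise --- no $\delta\to0$ limit is needed. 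Your independent coupling produces $(1-P)^2\Sigma^{M^0}+P^2\Sigma^{M^1}$, to which that concavity trick does not apply, and you compensate with the dominated-convergence argument showing the short leg is $o(1)$; this works because the upper envelope $d\log4+(-\log\det\Sigma^{M^0})^++(-\log\det\Sigma^{M^1})^+$ is a $\delta$-free integrable bound (integrability of the positive parts following from \eqref{eq:logdetbound} and boundedness of the $M^i$, exactly as you note). What each buys: the paper's argument is shorter and purely pointwise once the common-driver coupling is granted; yours trades a limiting argument for an entirely elementary coupling (independent product measure plus an independent coin), and it delivers as a free corollary the Jensen-type statement you mention at the end, namely that starting from the barycentre is no worse than starting from a distribution. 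Your verification that $Z$ is a win-martingale (continuity at the junction, absolute continuity of $\langle Z\rangle$, the conditioning computation using independence) is the part that deserves the care you give it, and it is sound.
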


\begin{proof}

    W.l.o.g.\ we can assume $t=0$. Now fix $\bar x,\hat x\in \Delta$, $\lambda\in[0,1]$, and take $\epsilon>0 $ arbitrary. As $v(0,\bar x),v(0,\hat x)<\infty$, we can find martingales laws $\bar{\mathbb Q}$ resp.\ $\hat{\mathbb Q}$, started at $\bar x$ resp.\ $\hat x$ that are $\epsilon$-optimizers for $v(0,\bar x)$ resp.\ $v(0,\hat x)$. Denoting by $\bar \Sigma$ resp.\ $\hat \Sigma$ their associated instantaneous covariances, we conclude that these  are a.s.\ non-singular. By Theorem 4.2 and Remark 4.3 of \cite[Chapter 3]{KaSh91}, we can realize $\bar{\mathbb Q}$ as the law of a martingale $\bar M$ satisfying $d\bar M_t=\bar \sigma_t d\bar B_t$ for some $d$-dimensional Brownian motion, with $\bar \sigma_t $ the symmetric positive square root of $\bar\Sigma_t$. Similarly, we can realize $\hat{\mathbb Q}$ as the law of a martingale $\hat M$ satisfying $d\hat M_t=\hat\sigma_t d\hat B_t$ for some $d$-dimensional Brownian motion, with $\hat \sigma_t $ the symmetric positive square root of $\hat\Sigma_t$. We can then couple all of these processes by setting $\bar B=\hat B=:B$.  By possibly enlarging the probability space, we can take $(p_t)_t$ to be any one-dimensional win-martingale, with $p_0=\lambda$, independent of $(\bar M,\hat M, B)$. Thus $p$ is driven by a likewise independent one-dimensional Brownian motion $B^{d+1}$, and $dp_t=\eta_tdB^{d+1}_t$ for some $(\eta_t)_t$. We now define 
    \[M_t:=p_t\bar M_t + (1-p_t)\hat M_t.\]
    Then $M_0=\lambda \bar x + (1-\lambda)\hat x$ and crucially $M_1$ takes values in the vertices of $\Delta$. Moreover $M$ is a martingale. To wit, by independence 
    \[\textstyle dM^i_t = \sum_{j=1}^d \{p_t\bar \sigma_t^{ij} + (1-p_t)\hat\sigma_t^{ij} \}dB^j_t + \{\bar M^i_t - \hat M^i_t\}\eta_t dB^{d+1}_t.\]
Thus $M$ is a win-martingale started at $\lambda \bar x + (1-\lambda)\hat x$. We compute
\[\textstyle d\langle M^i,M^k  \rangle_t = \sum_{j=1}^d \{p_t\bar \sigma_t^{ij} + (1-p_t)\hat\sigma_t^{ij} \}\{p_t\bar \sigma_t^{kj} + (1-p_t)\hat\sigma_t^{kj} \} dt +\eta_t^2\{\bar M^i_t - \hat M^i_t\}\{\bar M^k_t - \hat M^k_t\}  dt, \]
so the instantaneous covariance matrix of $M$  is
\[\Sigma_t:= (p_t\bar\sigma_t+(1-p_t)\hat\sigma_t)^2+\eta_t^2(\bar M_t- \hat M_t)(\bar M_t- \hat M_t)^{\top}.\]
By Sylvester's determinant theorem we have
\begin{align*}
  & \log \det \Sigma_t \\ = &\log \det [(p_t\bar\sigma_t+(1-p_t)\hat\sigma_t)^2 ] + \log \det [I_{d\times d}+\eta_t^2(p_t\bar\sigma_t+(1-p_t)\hat\sigma_t)^{-2}(\bar M_t- \hat M_t)(\bar M_t- \hat M_t)^{\top}] \\
  =& \log \det [(p_t\bar\sigma_t+(1-p_t)\hat\sigma_t)^2 ] + \log  [1 + \eta_t^2(\bar M_t- \hat M_t)^{\top}(p_t\bar\sigma_t+(1-p_t)\hat\sigma_t)^{-2}(\bar M_t- \hat M_t)].
\end{align*}
Let us call $r_t:= \log  [1 + \eta_t^2(\bar M_t- \hat M_t)^{\top}(p_t\bar\sigma_t+(1-p_t)\hat\sigma_t)^{-2}(\bar M_t- \hat M_t)]$.
Thus
\[\log\det \Sigma_t = \log\det\left ( (p_t\bar\sigma_t+(1-p_t)\hat\sigma_t)^2 \right )+r_t\geq p_t\log\det\bar \Sigma_t +(1-p_t)\log\det \hat\Sigma_t+r_t, \]
as follows from the concavity of the log-determinant.
Integrating, using the independence of $p$ and $(\bar\Sigma,\hat\Sigma)$, and  recalling that $\mathbb{E}[p_t]=\lambda$, we obtain
\begin{align*}
    v(0,\lambda\bar x + (1-\lambda)\hat x) &\textstyle \leq \mathbb{E}\left[\int_0^1-\log\det\Sigma_t \, dt\right]  \\ &\textstyle \leq  \mathbb{E}\left[\int_0^1 -p_t\log\det\bar \Sigma_t \, dt\right] +\mathbb{E}\left[\int_0^1-(1-p_t)\log\det \hat\Sigma_t \, dt\right]  -\mathbb{E}\left[\int_0^1r_t \, dt\right] \\ &\textstyle = \lambda \mathbb{E}\left[\int_0^1 -\log\det\bar \Sigma_t \, dt\right] + (1-\lambda )\mathbb{E}\left[\int_0^1-\log\det \hat\Sigma_t \, dt\right]-\mathbb{E}\left[\int_0^1r_t \,dt\right]  \\
    &\textstyle  \leq  \lambda v(0,\bar x) + (1-\lambda) v(0,\hat x)+\epsilon -\mathbb{E}\left[\int_0^1r_tdt\right] .
\end{align*}
We can now send $\epsilon$ to zero. As $r_t\geq 0$,  we thus obtain the convexity inequality for $v(0,\cdot)$. 
\end{proof}

\section{Viscosity solutions point of view}\label{sec4}

In the interior $[0,1) \times \Delta$, the HJB equation associated to \eqref{eq:optimal} is  clearly $$\textstyle 
-\pa_t v(t,x)= \inf_{\Sigma \geq 0}\left\{-\log\det(\Sigma)+\frac{1}{2} \Tr(\nabla^2 v \Sigma ) \right\}.$$
Due to the convexity of $\Sigma \mapsto -\log\det(\Sigma)+\frac{1}{2} \Tr(\nabla^2 v \, \Sigma )$, there exists a unique minimizer $\Sigma_*$ satisfying 
$
0=\Tr\left(\frac{1}{2}\nabla^2 v \Delta \Sigma \right) - \Tr(\Sigma_*^{-1} \Delta \Sigma)
$ 
for any variation $\Delta \Sigma=\Sigma'-\Sigma_*$. Therefore, heuristically,  $\Sigma_*^{-1}=\frac{1}{2}\nabla^2 v$, and the right hand side of the equation above becomes $$\textstyle \log \det \left(\frac{1}{2}\nabla^2 v\right)+d .$$ 
From Lemma~\ref{lem:boundary-terminal}, $v$ is infinite on the boundary and zero at the terminal time, and therefore we obtain \eqref{eq:HJB}. In this section, we show that the value function $v$ is the unique \emph{viscosity solution} to \eqref{eq:HJB} over \emph{convex functions}.

{ We say that a function $u: [0,1) \times \Delta \to \R$ is convex in space if for any $t \in [0,1)$, $x \mapsto u(t,x)$ is convex. Let us consider equations of the form
\begin{equation}\label{eq:gHJB}
\begin{cases}
 -\pa_t v(t,x)=H(t,v(t,x), \det \nabla^2 v(t,x)), \hspace{60pt} (t,x) \in [0,1) \times \Delta,     \\
\quad  \, \ v(t,x)=+\infty, \quad  \hspace{132pt} (t,x) \in [0,1) \times \partial \Delta, \\
\quad  \  v(1,x)=0, \quad \quad \hspace{130pt} (1,x) \in \{1\} \times \Delta,
\end{cases}
\end{equation}
where $H: [0,1) \times \R \times \R_+ \to \R$ is a continuous Hamiltonian such that  $H(t,u,\cdot)$ is increasing for any $t \in [0,1)$, $u \in \R$.  Setting $H(t,u,a)=\log a-d\log 2 +d$, \eqref{eq:gHJB} recovers \eqref{eq:HJB}. We introduce the notion of viscosity solution to general equations of the form \eqref{eq:gHJB} for the proof of Proposition~\ref{prop:uniqueness}.  }

\begin{definition}
A convex function $u_1: [0,1) \times \Delta \to \R$ is said to be a viscosity sub-solution to \eqref{eq:gHJB} if 
\begin{enumerate}
    \item[(i)] For any $x_0 \in \Delta$, $\limsup_{(t,x) \to (1,x_0)} u_1(t,x) \leq 0$. 
    \item[(ii)] For any convex test function $\phi \in C^2([0,1) \times \Delta; \R)$, and any local maximum $(t_0,x_0) $ of $u_1 - \phi$, we have 
    \begin{align*}
        -\pa_t \phi(t_0,x_0) \leq H(t_0, u_1(t_0,x_0), \det \nabla^2 \phi(t_0,x_0)).
    \end{align*}
\end{enumerate}
A convex function $u_2: [0,1) \times \Delta \to \R$ is said to be a viscosity super-solution to \eqref{eq:gHJB} if 
\begin{enumerate}
    \item[(i)] For any $x_0 \in \bar{\Delta}$, $\liminf_{(t,x) \to (1,x_0)} u_2(t,x) \geq 0$. 
    \item[(ii)] For any $(t_0,x_0) \in [0,1) \times \pa \Delta$, $\liminf_{(t,x) \to (t_0,x_0)} u_2(t,x)=+\infty$
    \item[(iii)] For any convex test function $\phi \in C^2([0,1) \times \Delta; \R)$, and any local minimum $(t_0,x_0)$ of $u_2 - \phi$, we have 
    \begin{align*}
        -\pa_t \phi(t_0,x_0) \geq H(t_0, u_2(t_0,x_0), \det \nabla^2 \phi(t_0,x_0)).
    \end{align*}
\end{enumerate}
A continuous function $u: [0,1) \times \Delta \to \R$ is said to be a viscosity solution to \eqref{eq:gHJB} if it is a sub-solution and super-solution. 
\end{definition}

\begin{remark}
Viscosity solutions are widely used to tackle nonlinear degenerate elliptic /  parabolic PDEs; see e.g.\ \cite{Userguide} and the references therein. For control problems, it is standard that the associated value functions are the unique solutions to corresponding HJB equations \cite{Ph09,To13}. The challenge here is the infinite boundary condition in \eqref{eq:HJB}. 

The notion of viscosity solution for Monge-Amp\`{e}re equations was introduced by Ishii and Lions in \cite[Section V.3]{IsLi90}. As the function $\Sigma \mapsto \det \Sigma$ is monotone for positive semi-definite matrices, the test functions are restricted to the class of convex functions. 
\end{remark}

Now we verify the viscosity property of the value function. The proof is standard and relies on the dynamic programming principle of the value function. We record it here for the convenience of the reader.  The expert reader can skip it. 

\begin{proposition}\label{prop:viscosity}
 The convex value function $v:[0,1) \times \Delta \to \R$ satisfies the dynamic programming principle, i.e., for any $(t,x) \in [0,1) \times \Delta $
    \begin{align*}
v(t,x)=&\textstyle  \inf_{\mathbb Q \in \mathcal{M}_{t,x}^{win}} \inf_{\tau \in \mathcal{T}_t^1} \E_{\mathbb Q}\left[ \int_{t}^{\tau} - \log \det \Sigma_u \, du + v(\tau,X_{\tau}) \right] \\
=& \textstyle \inf_{\mathbb Q \in \mathcal{M}_{t,x}^{win}} \sup_{\tau \in \mathcal{T}_t^1} \E_{\mathbb Q}\left[ \int_{t}^{\tau} - \log \det \Sigma_u \, du + v(\tau,X_{\tau}) \right],
    \end{align*}
    where $(X_t)_{t \in [0,1]}$ is the canonical process defined on the path space, and $\mathcal{T}_t^1$ denotes the set of stopping times with respect to the canonical filtration taking values in $[t,1]$. Moreover, the value function $v(t,x)$ is a viscosity solution to \eqref{eq:HJB}.
\end{proposition}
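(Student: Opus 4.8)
The plan is to prove the two statements in turn: first the dynamic programming principle (DPP), then the viscosity property, which will be a routine consequence of the DPP together with It\^o's formula.

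For the DPP, the inequality
\[
v(t,x)\ge \inf_{\mathbb Q\in\mathcal M^{win}_{t,x}}\sup_{\tau\in\mathcal T_t^1}\E_{\mathbb Q}\Big[\int_t^\tau -\log\det\Sigma_u\,du + v(\tau,X_\tau)\Big]
\]
follows by taking a near-optimal $\mathbb Q$ for $v(t,x)$, using the tower property over the canonical filtration at the stopping time $\tau$, and the fact that the conditional law of $(X_u)_{u\ge\tau}$ given $\mathcal F_\tau$ is (a.s.) an element of $\mathcal M^{win}_{\tau,X_\tau}$, so that the conditional cost is bounded below by $v(\tau,X_\tau)$; here one must check that $v(\tau,X_\tau)$ makes sense, i.e. $X_\tau\in\Delta$ a.s. for $\tau<1$ (which holds because a win-martingale stays in $\Delta$ strictly before time $1$) and that the terminal value $v(1,\cdot)=0$ is consistent at $\{\tau=1\}$. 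For the reverse inequality
\[
v(t,x)\le\inf_{\mathbb Q\in\mathcal M^{win}_{t,x}}\inf_{\tau\in\mathcal T_t^1}\E_{\mathbb Q}\Big[\int_t^\tau -\log\det\Sigma_u\,du+v(\tau,X_\tau)\Big],
\]
I would fix any $\mathbb Q$ and any $\tau$, and concatenate: run $X$ under $\mathbb Q$ up to $\tau$, then paste in, conditionally on $\mathcal F_\tau$, an $\epsilon$-optimal win-martingale for $v(\tau,X_\tau)$; a measurable-selection argument (using that $(t,x)\mapsto v(t,x)$ is measurable, indeed continuous in $x$ by Proposition~\ref{prop:convex} and continuous in $t$ by the scaling of Lemma~\ref{lem:boundary-terminal}) gives such a conditionally $\epsilon$-optimal family, and the concatenated process is again a win-martingale started at $x$. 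Since the middle expression is sandwiched between the $\inf$-$\inf$ and $\inf$-$\sup$ forms, all three coincide.

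For the viscosity property I would argue the sub- and super-solution properties separately from the DPP. The boundary and terminal conditions (i)-(ii) in the definition of super-solution and (i) for the sub-solution are exactly Lemma~\ref{lem:boundary-terminal} (infinite on $[0,1)\times\partial\Delta$, and $v(1,\cdot)=0$ together with continuity from Lemma~\ref{lem:boundary-terminal}). For the interior inequalities, take a convex $\phi\in C^2$ and a local max (resp.\ min) $(t_0,x_0)$ of $v-\phi$. For the super-solution inequality, use the $\inf$-$\sup$ form of the DPP with a deterministic small time $\tau=t_0+h$: pick the particular win-martingale with constant volatility $\Sigma_u\equiv\Sigma$ (for arbitrary fixed nonsingular $\Sigma$) on $[t_0,t_0+h]$, extended arbitrarily to a win-martingale afterward; applying It\^o to $\phi(u,X_u)$, dividing by $h$ and letting $h\to 0$ yields $-\partial_t\phi(t_0,x_0)\ge -\log\det\Sigma+\tfrac12\Tr(\nabla^2\phi(t_0,x_0)\Sigma)$, and optimizing over $\Sigma$ (the unconstrained minimum is attained at $\Sigma=2(\nabla^2\phi)^{-1}$, legitimate since $\phi$ is convex and, near a strict-type argument, $\nabla^2\phi(t_0,x_0)$ can be taken positive definite) gives $-\partial_t\phi(t_0,x_0)\ge\log\det(\tfrac12\nabla^2\phi(t_0,x_0))+d$, i.e.\ $H(t_0,v(t_0,x_0),\det\nabla^2\phi(t_0,x_0))$ with the convention from the excerpt. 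For the sub-solution inequality, use the $\inf$-$\inf$ form: for \emph{any} win-martingale on $[t_0,t_0+h]$, the DPP gives $\phi(t_0,x_0)=v(t_0,x_0)\le\E[\int_{t_0}^{t_0+h}-\log\det\Sigma_u\,du+\phi(t_0+h,X_{t_0+h})]$ up to $o(h)$ from the local-max property; It\^o then yields $-\partial_t\phi(t_0,x_0)\le\tfrac1h\E[\int-\log\det\Sigma_u+\tfrac12\Tr(\nabla^2\phi\,\Sigma_u)]\,du + o(1)$, and bounding the right side by its infimum over $\Sigma$ gives the desired inequality. One subtlety to handle carefully here is that, because test functions are restricted to convex $\phi$, the relevant $\Sigma$-infimum is genuinely the unconstrained one over positive semidefinite matrices and the minimizer is nonsingular, so there is no issue with the $\log\det$; another is the usual localization to make $(t_0,x_0)$ a \emph{global} max/min of $v-\phi$ on a small ball, which is standard.

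The main obstacle I anticipate is the concatenation/measurable-selection step in the easy ($\le$) direction of the DPP: one needs, conditionally on $\mathcal F_\tau$, a regular version of an $\epsilon$-optimal win-martingale from the random point $(\tau,X_\tau)$, glued measurably and preserving the martingale property and the terminal condition $X_1\in\mathcal V$. This is where the freedom to choose the probability space (emphasized in the definition of $v$ in Section~\ref{sec3}) is essential, and where one invokes a measurable selection theorem applied to the set-valued map $(s,y)\mapsto\mathcal M^{win}_{s,y}$ together with lower semicontinuity of the cost; the continuity of $v$ in the spatial variable (Proposition~\ref{prop:convex}) and the exact scaling in time (Lemma~\ref{lem:boundary-terminal}) make $v$ jointly continuous on $[0,1)\times\Delta$, which streamlines the selection. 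Everything else — the It\^o expansions, the $\Sigma$-optimization, and reading off the boundary/terminal data — is routine.
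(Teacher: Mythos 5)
Your overall structure — prove the dynamic programming principle first, then extract the viscosity sub- and super-solution inequalities from it by It\^o's formula and a $\Sigma$-optimization — is exactly the paper's approach (the paper defers the DPP itself to a standard reference, Pham 2009, Subsection 3.3, as you also anticipate). Your sub-solution step matches the paper's: from the inf-inf DPP, plug in the constant-volatility martingale $\mathbb{Q}_{\Sigma^0}$ on $[t_0,\tau_n]$ to get $-\partial_t\phi(t_0,x_0)\le-\log\det\Sigma^0+\tfrac12\Tr(\nabla^2\phi(t_0,x_0)\Sigma^0)$ for every $\Sigma^0$, and then infimize over $\Sigma^0$.

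However, your super-solution argument contains a genuine logical error. You write: ``pick the particular win-martingale with constant volatility $\Sigma_u\equiv\Sigma$ \dots It\^o yields $-\partial_t\phi(t_0,x_0)\ge -\log\det\Sigma+\tfrac12\Tr(\nabla^2\phi(t_0,x_0)\Sigma)$.'' This inequality for an arbitrary fixed $\Sigma$ is \emph{false} — letting $\Sigma\to 0$ forces $-\partial_t\phi=+\infty$ — and it cannot be extracted from the DPP in the super-solution direction. The problem: when $v-\phi$ has a local \emph{minimum}, $v\ge\phi$ locally, and the $\inf$-$\inf$ DPP only gives $\phi(t_0,x_0)=v(t_0,x_0)\le\E_{\mathbb{Q}_\Sigma}[\int_{t_0}^{\tau}-\log\det\Sigma\,du+v(\tau,X_\tau)]$. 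Since $v(\tau,X_\tau)\ge\phi(\tau,X_\tau)$, you cannot replace $v$ by $\phi$ on the right-hand side; the inequalities go the wrong way. To get the super-solution inequality you must keep the infimum over $\mathbb{Q}$: from the $\inf$-$\sup$ form one gets $\phi(t_0,x_0)\ge\inf_{\mathbb{Q}}\E_{\mathbb{Q}}[\int_{t_0}^{\tau_n}-\log\det\Sigma_u\,du+\phi(\tau_n,X_{\tau_n})]$, then apply It\^o and bound the integrand pointwise from below by $\inf_\Sigma\{-\log\det\Sigma+\tfrac12\Tr(\nabla^2\phi(u,x)\Sigma)\}+\partial_t\phi(u,x)=\log\det(\tfrac12\nabla^2\phi)+d+\partial_t\phi$, which is $\mathbb{Q}$-independent, and finally pass to the limit $n\to\infty$ (the normalization by $n$ and the a.s.\ convergence $n(\tau_n-t_0)\to 1$ for the truncated exit time need some care here). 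So the per-$\Sigma$ claim has to be removed; the $\Sigma$-minimization enters \emph{inside} the expectation as a lower bound, not as a family of inequalities indexed by a constant $\Sigma$.
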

\begin{proof}
According to the comment at the beginning of Section~\ref{sec3}, it is equivalent to consider the strong formulation of this control problem. In this case, the proof of the dynamic programming principle can be found in, for example, \cite[Subsection 3.3]{Ph09}. According to Lemma~\ref{lem:boundary-terminal}, $v(t,x)$ satisfies the boundary and terminal conditions of \eqref{eq:HJB}. It remains to verify its viscosity property. 

Suppose $\phi:[0,1) \times \Delta \to \R$ is a convex test function such that $v-\phi$ obtains a local maximum at $(t_0,x_0)$ with $v(t_0,x_0)=\phi(t_0,x_0)$, and we aim to show that 
\begin{align*}
   \textstyle  -\pa_t \phi(t_0,x_0) \leq \log \det \left(\frac{1}{2} \nabla^2 \phi(t_0,x_0) \right)+d. 
\end{align*}
Take a small $r>0$ such that  $B_r(t_0) \times B_r(x_0) \subset [0,1) \times \Delta$ and $v \leq \phi$ over $B_r(t_0) \times B_r(x_0)$, where $B_r(t_0)$ and $B_r(x_0)$ denote open Euclidean balls with radius $r$ and centers $t_0$ and $x_0$ respectively. Define $\tau:= \inf\{s \geq t_0: \, X_s \not \in B_r(x_0), \, X_{t_0}=x_0 \}$. For $n \in \mathbb N$, thanks to the dynamic programming principle 
\begin{align*}
 \phi(t_0,x_0)=v(t_0,x_0) \leq &\textstyle  \inf_{\mathbb Q \in \mathcal{M}_{t,x}^{win}} \E_{\mathbb Q} \left[\int_{t_0}^{(t_0+1/n)\wedge \tau} -\log \det \Sigma_u \, du + v((t_0+1/n)\wedge \tau,X_{(t_0+1/n)\wedge \tau}) \right] \\ 
 \leq &\textstyle  \inf_{\mathbb Q \in \mathcal{M}_{t,x}^{win}} \E_{\mathbb Q} \left[\int_{t_0}^{(t_0+1/n)\wedge \tau} -\log \det \Sigma_u \, du + \phi((t_0+1/n)\wedge \tau,X_{(t_0+1/n)\wedge \tau}) \right].
\end{align*}
For any $\Sigma^0 \in \mathbb S^d$, we construct $\mathbb Q_{\Sigma_0}$ to be the distribution of $dX_t=\Sigma^0 \, dB_t$, $X_{t_0}=x_0$ for $t \in [t_0,\tau]$, and a win-martingale from position $(\tau,X_{\tau})$ after $\tau$. Therefore, we have 
\begin{align*}\textstyle 
    \phi(t_0,x_0) \leq \E_{\mathbb Q_{\Sigma_0}} \left[\int_{t_0}^{(t_0+1/n)\wedge \tau} -\log \det \Sigma^0 \, du + \phi((t_0+1/n)\wedge \tau,X_{(t_0+1/n)\wedge \tau}) \right].
\end{align*}
An application of It\^{o}'s formula yields that, for any $n \in \mathbb N$
\begin{align*}\textstyle 
  0 \leq n\E^{\mathbb Q_{\Sigma_0}} \left[\int_{t_0}^{(t_0+1/n)\wedge \tau}- \log\det \Sigma^0 + \frac{1}{2}\Tr(\Sigma^0 \nabla^2 \phi(u,X_u))+\pa_t \phi(u,X_u) \, du \right].
\end{align*}
It can be easily checked that $n ( (t_0+1/n)\wedge \tau-t_0) \to 1$ $a.s.$, and $\pa_t \phi$, $\nabla^2\phi$ are uniformly bounded over $[t_0,t_0+1/n] \times B_r(x_0)$. Therefore we get that, for any $\Sigma^0 \in \mathbb S^d$
\begin{align*}
    0 \leq &\textstyle  \lim\limits_{n \to \infty} n\E_{\mathbb Q_{\Sigma_0}} \left[\int_{t_0}^{(t_0+1/n)\wedge \tau}- \log\det \Sigma^0 + \frac{1}{2}\Tr(\Sigma^0 \nabla^2 \phi(u,X_u))+\pa_t \phi(u,X_u) \, du \right] \\
    =&\textstyle -\log\det \Sigma^0+\frac{1}{2}\Tr(\Sigma^0 \nabla^2\phi(t_0,x_0)+ \pa_t \phi(t_0,x_0), 
\end{align*}
which implies 
\begin{align*}
-\pa_t \phi(t_0,x_0) \leq& \textstyle  \inf_{\Sigma^0 \in \mathbb S^d} \left(-\log\det\Sigma^0+\frac{1}{2}\Tr(\Sigma^0 \nabla^2\phi(t_0,x_0))  \right) \\
= &\textstyle  \log\det\left(\frac{1}{2} \nabla^2\phi(t_0,x_0)\right)+d, 
\end{align*}
since $\pa_t \phi(t_0,x_0)$ is finite and hence $\nabla^2\phi(t_0,x_0)$ must be necessarily positive definite.

The proof for the super-solution part is almost the same, and thus we skip it here. 
\end{proof}

At the end of this section, we will prove the uniqueness. Suppose $u_1$, $u_2$ are convex sub- and super-solutions resp. Then uniqueness follows from the comparison principle, stating that $u_1 \leq u_2$, as can be shown by comparing the derivatives of $u_1$ and $u_2$ at the maximum of $u_1-u_2$; see e.g.\ \cite{usersguide}. However, due to the infinite boundary condition, the maximum of $u_1-u_2$ may not be obtained in the interior of the domain. To address this, we use an observation from \cite{GuJi04}: For $\lambda>1$, scaled sub-solutions over space variables, $u_1^{\lambda}(t,x):=u_1(t,x/\lambda)$, are still sub-solutions up to a small error, and are now finite over $[0,1] \times \bar \Delta$. By standard doubling variables, we prove that  $u_1^{\lambda} \leq u_2$, and conclude  by $\lim_{\lambda \to 1} u_1^{\lambda}=u_1$.

\begin{proposition}\label{prop:uniqueness}
For \eqref{eq:HJB}, any viscosity sub-solution $u_1$ is smaller than any viscosity super-solution $u_2$. Therefore $v$ defined via \eqref{eq:optimal} is the unique viscosity solution to \eqref{eq:HJB}.
\end{proposition}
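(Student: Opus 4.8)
The plan is to prove the \emph{comparison principle}: every convex viscosity sub-solution $u_1$ of \eqref{eq:HJB} satisfies $u_1\le u_2$ for every convex viscosity super-solution $u_2$. Uniqueness is then immediate: by Proposition~\ref{prop:convex} and Proposition~\ref{prop:viscosity} the value function $v$ is itself a convex viscosity solution of \eqref{eq:HJB}, so if $\tilde v$ were another such solution, applying the comparison principle to the pairs $(v,\tilde v)$ and $(\tilde v,v)$ would give $v\le\tilde v\le v$. The obstacle to a textbook comparison argument is the \emph{infinite lateral boundary condition} $u_2\equiv+\infty$ on $[0,1)\times\pa\Delta$: a priori the supremum of $u_1-u_2$ over $[0,1)\times\Delta$ need not be attained, and may escape towards $\pa\Delta$.

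Following the device of Guan--Jian \cite{GuJi04}, I would remove this obstacle by an interior rescaling. Fix $p\in\Delta$ (say the barycenter) and, for $\lambda>1$ and $\sigma>0$, set
\[
w^{\lambda,\sigma}(t,x):=u_1\!\left(t,\,p+\tfrac{x-p}{\lambda}\right)-(2d\log\lambda+\sigma)(1-t).
\]
Since $\lambda>1$, the point $p+(x-p)/\lambda=\tfrac1\lambda x+(1-\tfrac1\lambda)p$ lies in a fixed compact subset of $\Delta$ as $x$ ranges over $\bar\Delta$, so $w^{\lambda,\sigma}$ is convex in space, finite and continuous on a neighborhood of $[0,1]\times\bar\Delta$, and $w^{\lambda,\sigma}\to u_1$ pointwise on $[0,1)\times\Delta$ as $(\lambda,\sigma)\to(1,0)$. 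Moreover the scaling invariance $\det(\tfrac12\lambda^2 A)=\lambda^{2d}\det(\tfrac12 A)$ turns $w^{\lambda,\sigma}$ into a \emph{strict} sub-solution of \eqref{eq:HJB}: if a convex test function $\phi$ touches $w^{\lambda,\sigma}$ from above at $(t_0,x_0)$, then $\psi(t,y):=\phi\big(t,\,p+\lambda(y-p)\big)+(2d\log\lambda+\sigma)(1-t)$ touches $u_1$ from above at $\big(t_0,\,p+(x_0-p)/\lambda\big)$, is convex in space, and satisfies $\nabla^2\psi=\lambda^2\nabla^2\phi$ and $\pa_t\psi=\pa_t\phi-(2d\log\lambda+\sigma)$; feeding $\psi$ into the sub-solution inequality for $u_1$ yields
\[
-\pa_t\phi(t_0,x_0)\le\log\det\!\left(\tfrac12\nabla^2\phi(t_0,x_0)\right)+d-\sigma .
\]
One checks similarly, using the spatial continuity of $u_1$ and condition~(i) for $u_1$, that $\limsup_{(t,x)\to(1,x_0)}w^{\lambda,\sigma}(t,x)\le 0$ for every $x_0\in\Delta$.

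With $w^{\lambda,\sigma}$ finite up to the closure, I would run the standard doubling-of-variables argument to show $w^{\lambda,\sigma}\le u_2$. Suppose $\theta:=\sup_{[0,1)\times\Delta}(w^{\lambda,\sigma}-u_2)>0$. Since $u_2\equiv+\infty$ on $[0,1)\times\pa\Delta$ while $w^{\lambda,\sigma}$ is bounded there, and $w^{\lambda,\sigma}-u_2\le 0$ in the limit at $\{1\}\times\Delta$, the supremum $\theta$ is attained at some $(\bar t,\bar x)\in[0,1)\times\Delta$. For $\gamma>0$ consider, \emph{with a single time variable},
\[
\Phi_\gamma(t,x,y):=w^{\lambda,\sigma}(t,x)-u_2(t,y)-\tfrac{\gamma}{2}\,|x-y|^2 ,
\]
whose maximum on $[0,1]\times\bar\Delta\times\bar\Delta$ is attained at points $(t_\gamma,x_\gamma,y_\gamma)\to(\bar t,\bar x,\bar x)$ with $\gamma|x_\gamma-y_\gamma|^2\to0$; for $\gamma$ large these points lie in $[0,1)\times\Delta$. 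Applying the parabolic theorem on sums \cite{Userguide} within the convex-test-function framework of Ishii--Lions \cite{IsLi90} — legitimate because $w^{\lambda,\sigma}$ and $u_2$ are convex in space, which forces the relevant second-order jets to have positive semidefinite Hessian parts — yields a common time-slope $b\in\R$ (common because the penalization is $t$-independent) and matrices $0\preceq X\preceq Y$ with $(b,\gamma(x_\gamma-y_\gamma),X)$ in the parabolic super-jet of $w^{\lambda,\sigma}$ at $(t_\gamma,x_\gamma)$ and $(b,\gamma(x_\gamma-y_\gamma),Y)$ in the parabolic sub-jet of $u_2$ at $(t_\gamma,y_\gamma)$. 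The strict sub-solution inequality and the super-solution inequality then give
\[
\log\det\!\left(\tfrac12 Y\right)+d\ \le\ -b\ \le\ \log\det\!\left(\tfrac12 X\right)+d-\sigma ,
\]
and since $0\preceq X\preceq Y$ implies $\det X\le\det Y$, this forces $0\le-\sigma$, a contradiction. Hence $w^{\lambda,\sigma}\le u_2$ for all $\lambda>1$, $\sigma>0$; letting $\sigma\downarrow0$ and then $\lambda\downarrow1$ gives $u_1\le u_2$, completing the proof.

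The main obstacle is precisely the infinite lateral boundary condition, which the interior-rescaling of \cite{GuJi04} defuses at the price of a small additive error $2d\log\lambda$ in the equation; this error is absorbed, and a \emph{strict} sub-solution recovered, by the linear-in-time correction $-(2d\log\lambda+\sigma)(1-t)$. A secondary, equation-specific point is that every test function and every jet produced by the theorem on sums must stay within the convex class for $\log\det$ to be monotone — this is the Ishii--Lions framework \cite{IsLi90} — and it is compatible with the argument because $X\succeq 0$ together with $X\preceq Y$ automatically yields $Y\succeq 0$.
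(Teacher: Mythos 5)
Your proof is correct and follows the same overall strategy as the paper's: neutralize the infinite lateral boundary condition via the Guan--Jian interior rescaling $x\mapsto p+(x-p)/\lambda$, exploit the scaling identity $\det(\tfrac12\lambda^2 A)=\lambda^{2d}\det(\tfrac12 A)$ to keep the rescaled $u_1^\lambda$ a sub-solution (up to a constant absorbed by a linear-in-time term), then run a doubling-of-variables argument and let $\lambda\downarrow 1$. The only genuine difference is how each argument achieves \emph{strict} comparison in the degenerate $u$-independent Hamiltonian: the paper multiplies by $e^t$, passing to $\hat u_i:=e^t u_i$, which introduces a $-u$ term into the transformed equation and makes it proper, so the contradiction comes from $\hat u_1^\lambda(t_\alpha,x_\alpha)\le\hat u_2(t_\alpha,y_\alpha)$ conflicting with $\hat u_1^\lambda-\hat u_2\to\delta>0$; you instead subtract $\sigma(1-t)$ to promote $w^{\lambda,\sigma}$ to a strict sub-solution, and the contradiction comes directly from $0\le-\sigma$ after combining the two jet inequalities with $X\preceq Y$. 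Both are textbook devices of equal strength here, and your version is arguably a bit leaner since it avoids recomputing the transformed PDE; the paper also localizes $\hat u_2$ near the interior maximizer before doubling variables, while you observe that this localization is unnecessary once $\gamma$ is large. One small caution, which the paper also leaves implicit: when invoking the parabolic theorem on sums within the Ishii--Lions convex-test-function framework, you should make sure the jets produced are consistent with testing only against convex functions (so that $\log\det$ is evaluated on positive semidefinite matrices); your remark that convexity of $w^{\lambda,\sigma}$ and $u_2$ keeps the relevant Hessian parts PSD, and that $X\succeq 0$ with $X\preceq Y$ gives $Y\succeq 0$, is the right observation and should be stated as carefully as you have done.
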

\begin{proof}
    \noindent \emph{Step 1:} Without loss of generality, we assume that $0 \in \Delta$. Indeed, for any $x_0 \in \Delta$, define $\Delta^{x_0}=\{x-x_0: \, x \in \Delta \}$ and $u^{x_0}(t,x)=u(t,x+x_0)$ for any $(t,x) \in [0,1) \times \Delta^{x_0}$. Then it can be easily verified that $u$ is a viscosity sub(super)-solution to \eqref{eq:HJB} iff $u^{x_0}$ is a viscosity sub(super)-solution to 
    \begin{equation*}
\begin{cases}
-\pa_t v(t,x)= \log\det\left(\frac{1}{2} \nabla^2 v(t,x) \right)+d , \hspace{6pt} \quad (t,x) \in [0,1)\times \Delta^{x_0} \\
\quad  \, \ v(t,x)=+\infty, \quad \hspace{82pt}  (t,x) \in [0,1) \times \partial \Delta^{x_0}, \\
\quad  \  v(1,x)=0, \quad \quad \hspace{81pt} (1,x) \in \{1\} \times \Delta^{x_0}.
\end{cases}
\end{equation*}

\medskip 
\noindent \emph{Step 2a:} For any $\lambda>1$, define $\Delta_{\lambda}:= \{\lambda x: \, x \in \Delta \}$, and \[ u_1^{\lambda}(t,x):= u_1(t,x/\lambda)+2d(t-1) \log \lambda, \quad (t,x) \in [0,1)\times \Delta_{\lambda}.\] As $0 \in \Delta$, $\Delta$ is a proper subset of $\Delta_{\lambda}$, and hence $u_1^{\lambda}$ is well-defined over $[0,1] \times \overline{\Delta}$. We claim that it is a viscosity sub-solution to \eqref{eq:HJB}. 

It is straightforward that for any $x_0 \in \Delta$, $\limsup_{(t,x) \to (1,x_0)} u^{\lambda}_1(t,x) \leq 0$. Take any convex  $\phi \in C^2([0,1) \times \Delta; \R)$ such that $u_1^{\lambda}-\phi$ obtains a local maximum at $(t_0,x_0) \in [0,1) \times \Delta$. With 
\begin{align*}
    \phi^{\lambda}(t,x)=\phi(t,\lambda x)-2d(t-1) \log(\lambda), 
\end{align*}
$u_1-\phi^{\lambda}$ obtains a local maximum at $(t_0,x_0/\lambda)$. Since $u_1$ is a sub-solution, 
    \begin{align*}\textstyle 
        -\pa_t \phi^{\lambda}(t_0,x_0/\lambda) \leq  \log \det\left(\frac{1}{2} \nabla^2\phi^{\lambda}(t_0,x_0/\lambda) \right)+ d.
    \end{align*}
As $|\pa_t \phi^{\lambda}(t_0,x_0/\lambda)|$ is finite, it implies that $\nabla^2\phi^{\lambda}(t_0,x_0/\lambda)=\lambda^2 \nabla^2 \phi(t_0,x_0)>0$. Therefore 
\begin{align*}
    -\pa_t \phi(t_0,x_0)+2d\log \lambda= &\textstyle -\pa_t \phi^{\lambda}(t_0,x_0/\lambda) 
    \leq  \log \det\left(\frac{1}{2} \nabla^2\phi^{\lambda}(t_0,x_0/\lambda) \right)+ d \\
    =& \textstyle  \log \det\left(\frac{1}{2} \nabla^2\phi(t_0,x_0) \right)+ 2d \log \lambda + d,
\end{align*}
which justifies the sub-solution property of $u_1^{\lambda}$.

\medskip 
\noindent \emph{Step 2b:} Let us define 
\begin{align*}
    \hat u_1^{\lambda}(t,x):=e^t u_1^{\lambda}(t,x), \quad \hat u_2(t,x):=e^t u_2(t,x), \quad (t,x) \in [0,1) \times \Delta. 
\end{align*}
Then by the same argument as in \emph{Step 2a:}, it can be easily verified that $\hat u_1^{\lambda}$ and $\hat u_2$ are respectively a sub-solution and super-solution to 
\begin{equation}
\begin{cases}
    -\pa_t u =-u +e^t \log \det \nabla^2 u+ e^t d(1-t), \quad \hspace{30pt} (t,x) \in [0,1) \times \Delta,\\
   \quad  u=+\infty, \quad \hspace{142pt} (t,x) \in [0,1) \times \pa \Delta  ,\\
   \quad   u=0, \quad \quad \hspace{142pt}  (1,x) \in  \{1\} \times \Delta.
    \end{cases}
\end{equation}

\medskip
\noindent \emph{Step 3a:}
Let us show that $\hat u_1^{\lambda} \leq \hat u_2$ over $[0,1) \times \Delta$. If not, we consider the maximization
\begin{align*}
    \argmax_{(t,x) \in [0,1) \times \Delta} \hat u_1^{\lambda}(t,x)-\hat u_2(t,x). 
\end{align*}
Note that $\hat u_1^{\lambda}$ is well-defined over $[0,1) \times \Delta_{\lambda}$, and hence according to the definition of viscosity sub-solution is continuous and bounded over $[0,1] \times \overline{\Delta}$. Therefore for any $(t_0,x_0) \in [0,1) \times \pa \Delta$, 
\[ \lim\limits_{(t,x) \to (t_0,x_0)} \hat u_1^{\lambda}(t,x)- \hat u_2(t,x)=-\infty, \]
and for any $x_0 \in \overline{\Delta}$, 
\[ \limsup\limits_{(t,x) \to (1,x_0)} \hat u_1^{\lambda}(t,x)- \hat u_2(t,x) \leq 0.\]
As $\sup_{(t,x) \in [0,1) \times \Delta} \hat u_1^{\lambda}(t,x)-\hat u_2(t,x)=:\delta >0 $, the supremum must be obtained in the interior
\begin{align*}\textstyle 
    [0,1) \times \Delta \ni (t_0,x_0)=  \argmax_{(t,x) \in [0,1) \times \Delta} u_1^{\lambda}(t,x)-u_2(t,x). 
\end{align*}

\medskip
\noindent \emph{Step 3b:} W.l.o.g., we assume that $\hat u_2=+\infty$ outside a closed small ball of $(t_0,x_0)$ as such modification still preserves its super-solution property. For $\alpha>0$, let us consider 
\begin{align}
    \Phi_{\alpha}(t,x,y):= \hat u_1^{\lambda}(t,x) -\hat u_2(t,y) -\frac{\alpha}{2}|x-y|^2. 
\end{align}
We maximize $\Phi_{\alpha}$ over $[0,1)\times \Delta \times \Delta$, where the maximum $(t_{\alpha},x_{\alpha},y_{\alpha})$ must be obtained in the interior thanks to the modification of $\hat u_2$, i.e., 
$$\textstyle M_\alpha:=\sup_{(t,x,y) \in [0,1) \times \Delta \times \Delta} \Phi_{\alpha}(t,x, y)=\Phi_{\alpha}(t_{\alpha},x_{\alpha},y_{\alpha})$$
It can be easily checked that $\lim_{\alpha \to \infty} M_{\alpha}=\delta$, $\lim_{\alpha \to \infty}\alpha|x_{\alpha}-y_{\alpha}|^2 \to 0$, and an accumulating point of $(t_{\alpha},x_{\alpha},y_{\alpha})$ as $\alpha \to \infty$ is a maximizer of $(t,x) \mapsto \hat u_1^{\lambda}-\hat u_2$; see e.g. \cite[Lemma 3.1]{Userguide}. Without loss of generality, we assume that 
\[ (t_\alpha,x_\alpha,y_\alpha) \to (t_0, x_0 ,x_0) \ \  \text{as } \alpha \to \infty. \]

Applying the parabolic version of Ishii's Lemma \cite[Theorem 8.3]{Userguide} to $\Phi_{\alpha}$, due to the continuity of $\R \times \mathbb S^d \ni (b,\Sigma) \mapsto (b,\log \det \Sigma)$, there exist $b_{\alpha} \in \R$, $\Sigma_{\alpha}, \Sigma_{\alpha}' \in \mathbb S^d$ such that  $\Sigma_{\alpha} \leq \Sigma_{\alpha}'$ and 
\begin{align*}
   -b_{\alpha} \leq& - \hat u_1^{\lambda}(t_\alpha,x_\alpha)+e^{t_\alpha} \log \det \Sigma_{\alpha}+e^{t_\alpha}(1-t_\alpha), \\
   -b_{\alpha} \geq& -\hat u_2(t_\alpha,y_\alpha)+e^{t_\alpha} \log \det \Sigma_{\alpha}'+e^{t_\alpha}(1-t_\alpha). 
\end{align*}
The above implies the inequality 
\begin{align*}
    \hat u_1^{\lambda}(t_\alpha,x_\alpha) &\leq e^{t_\alpha} \log \det \Sigma_{\alpha}+b_{\alpha}+e^{t_{\alpha}}(1-t_{\alpha})  \\
    &\leq e^{t_\alpha} \log \det \Sigma_{\alpha}'+b_{\alpha}+e^{t_{\alpha}}(1-t_{\alpha})  \leq \hat u_2(t_\alpha,y_\alpha) 
\end{align*}
for every large $\alpha>0$, which contradicts with the fact that 
\begin{align*}
    \lim\limits_{\alpha \to \infty}   \hat u_1^{\lambda}(t_\alpha,x_\alpha)-\hat u_2(t_\alpha,y_\alpha) = \delta >0.
\end{align*}

\medskip 
\noindent \emph{Step 4:} We have shown that $\hat u_1^{\lambda} \leq \hat u_2$ over $[0,1) \times \Delta$ for any $\lambda>1$. Taking $\lambda \to 1$ and using the continuity of $\lambda \to \hat u^{\lambda}_1(t,x)$, we conclude the result. 
\end{proof}

\section{ Existence and asymptotic behavior of  the Monge-Amp\`{e}re Equation \eqref{eq:Monge}  }\label{sec5}

In Lemma~\ref{lem:boundary-terminal} we obtained the decomposition of the value function $v(t,x)=(1-t)v(0,x)+d(1-t) \log(1-t)$,  so  \eqref{eq:HJB} yields that 
\begin{equation}\label{eq:fg}\textstyle 
v(0,x)+d \log(1-t)+d=  \log \det\left(\frac{1}{2}\nabla^2 v(0,x) \right)+d \log (1-t)+d.
\end{equation}
 Therefore, the function $\Delta \ni x \mapsto g(x):=v(0,x)$ satisfies \eqref{eq:Monge}.

In this section, we prove the first main result of the paper: Theorem~\ref{thm:quantitive}. {To make the paper self-contained, first we give an alternative proof for the existence of a smooth solution to \eqref{eq:Monge}, which was already established in \cite{GuJi04}.} Then we prove the novel parts of Theorem \ref{thm:quantitive}, concerning the asymptotic behavior of $\nabla^2 g$ near the boundary $\partial \Delta$.

The key ingredient of the arguments is the following barrier function of \eqref{eq:Monge}:
\begin{align}\label{eq:w}\textstyle 
w(x):=-2 \sum_{i=1}^d \log(x_i)-2 \log\left(1-\sum_{i=1}^d x_i \right), \, \quad x \in \Delta.
\end{align}

\begin{lemma}\label{lem:maincomponent}
   The functions $w-\log(d+1)$ and $w$ are smooth sub- and super-solution to \eqref{eq:Monge} respectively. Furthermore, we have  that $\inf_{x\in\Delta} \{w(x)-\log(d+1)\}> 0$.
  \end{lemma}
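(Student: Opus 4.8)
The plan is to verify the three assertions of Lemma~\ref{lem:maincomponent} by direct computation, exploiting the product structure of $w$.

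\medskip

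\noindent\textbf{Step 1: Compute $\nabla^2 w$ and its determinant.}
Write $s:=\sum_{i=1}^d x_i$ and $x_0:=1-s$, so that $w(x)=-2\sum_{i=0}^d \log(x_i)$ with the convention $x_0=1-\sum_{i\ge 1}x_i$. Then $\partial_i w = -2/x_i + 2/x_0$, and
\[
\partial_{ij}^2 w = \frac{2}{x_i^2}\,\delta_{ij} + \frac{2}{x_0^2},
\]
so that $\tfrac12\nabla^2 w = D + \tfrac{1}{x_0^2}\mathbf 1\mathbf 1^\top$ with $D:=\mathrm{Diag}(1/x_1^2,\dots,1/x_d^2)$. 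By the matrix determinant lemma,
\[
\det\!\left(\tfrac12\nabla^2 w\right) = \det D \cdot\left(1+\tfrac{1}{x_0^2}\mathbf 1^\top D^{-1}\mathbf 1\right)
= \left(\prod_{i=1}^d \tfrac{1}{x_i^2}\right)\left(1+\tfrac{1}{x_0^2}\sum_{i=1}^d x_i^2\right).
\]
I would then compare $\log\det(\tfrac12\nabla^2 w)$ with $w(x) = -2\sum_{i=0}^d\log x_i = \log\!\left(\prod_{i=0}^d x_i^{-2}\right) = \log\!\left(\tfrac{1}{x_0^2}\prod_{i=1}^d x_i^{-2}\right)$.

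\medskip

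\noindent\textbf{Step 2: Identify the super-solution and sub-solution inequalities.}
Taking logs, the super-solution inequality $w(x)\ge \log\det(\tfrac12\nabla^2 w(x))$ becomes
\[
\frac{1}{x_0^2} \ \ge\ 1 + \frac{1}{x_0^2}\sum_{i=1}^d x_i^2,
\]
i.e.\ $1\ge x_0^2 + \sum_{i=1}^d x_i^2 = \sum_{i=0}^d x_i^2$, which holds since $\sum_{i=0}^d x_i = 1$ with all $x_i>0$ (so $\sum x_i^2 < (\sum x_i)^2 = 1$); this uses that $H(t,u,\cdot)$, here $a\mapsto\log a$, is increasing, so $w$ being a super-solution of the \eqref{eq:Monge}-type equation amounts exactly to this inequality. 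For the sub-solution claim one needs $w(x) - \log(d+1) \le \log\det(\tfrac12\nabla^2 w(x))$, i.e.
\[
\frac{1}{x_0^2}\cdot\frac{1}{d+1} \ \le\ 1 + \frac{1}{x_0^2}\sum_{i=1}^d x_i^2,
\]
equivalently $\tfrac{1}{d+1}\le x_0^2 + \sum_{i=1}^d x_i^2 = \sum_{i=0}^d x_i^2$. Since $\sum_{i=0}^d x_i=1$, Cauchy--Schwarz (or power-mean) gives $\sum_{i=0}^d x_i^2 \ge \tfrac{1}{d+1}(\sum_{i=0}^d x_i)^2 = \tfrac{1}{d+1}$, which is precisely what is needed. (Note the shift $-\log(d+1)$ is added because $H$ contains the constant $-d\log 2$, and scaling $\tfrac12\nabla^2 w$ corresponds additively to $w$; here the clean form is that $w$ satisfies the equation with the $\log$ replaced by $\ge/\le$ up to the stated constant.)

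\medskip

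\noindent\textbf{Step 3: The strict positivity $\inf_{x\in\Delta}\{w(x)-\log(d+1)\}>0$.}
Here I would argue that $w(x)=-2\sum_{i=0}^d\log x_i \ge -2(d+1)\cdot\tfrac{1}{d+1}\sum_{i=0}^d\log x_i$, and by Jensen (concavity of $\log$) $\tfrac{1}{d+1}\sum_{i=0}^d\log x_i \le \log\!\big(\tfrac{1}{d+1}\sum_{i=0}^d x_i\big) = \log\tfrac{1}{d+1}$, hence $w(x) \ge -2(d+1)\log\tfrac{1}{d+1} = 2(d+1)\log(d+1)$. Therefore $w(x)-\log(d+1) \ge (2(d+1)-1)\log(d+1) = (2d+1)\log(d+1) > 0$ for $d\ge 1$, giving a uniform positive lower bound.

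\medskip

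\noindent\textbf{Main obstacle.} None of these steps is genuinely hard; the only thing to be careful about is bookkeeping the additive constants correctly — i.e.\ tracking how the factor $\tfrac12$ inside the determinant and the constant $-d\log 2 + d$ (resp.\ the pure $\log$ form in \eqref{eq:Monge}) interact with the $\log(d+1)$ shift, and making sure the direction of each inequality matches the monotonicity of $a\mapsto\log a$ that distinguishes sub- from super-solutions. I would double-check by specializing to $d=1$, where $w(x) = -2\log x - 2\log(1-x)$ and one can compare directly with the explicit solution $g(x)=\log(\pi^2/\sin^2(\pi x))$ to confirm the sub/super ordering $w-\log 2 \le g \le w$.
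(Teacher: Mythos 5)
Your proposal is correct and matches the paper's argument almost verbatim: the same Sylvester/matrix-determinant-lemma computation yields $\det(\tfrac12\nabla^2 w)/e^w = \sum_{i=0}^d x_i^2 \in [\tfrac{1}{d+1},1]$, and the same Jensen estimate gives the strict positivity of $\inf_\Delta\{w-\log(d+1)\}$. The only blemish is the parenthetical at the end of Step~2 (the $\log(d+1)$ shift has nothing to do with the $-d\log 2$ constant in $H$ — it comes solely from the $[\tfrac{1}{d+1},1]$ bound on $\sum x_i^2$), but this is a stray remark and does not affect the correctness of the proof.
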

\begin{proof}
For simplicity, we denote $w_i(x)=\pa_{x_i} w(x)$ and $w_{ij}(x)=\pa^2_{x_ix_j}w(x)$ for any $1\leq i,j \leq d$. By direct computation, 
\begin{align*}
    w_{ij}(x)&\textstyle = \frac{2\delta_{ij}}{x_i^2}+\frac{2}{(1-\sum_{k=1}^d x_k)^2}, \\
    \nabla^2 w(x)&\textstyle =Diag\left(2/x_i^2\right)_{i=1}^d+ \frac{2}{(1-\sum_{k=1}^d x_k)^2} \mathbf{1} \mathbf{1}^\top,
\end{align*}
where $Diag\left(2/x_i^2\right)_{i=1}^d$ denotes the diagonal matrix with the $(i,i)$ entry being $\frac{1}{x_i^2}$ for $i=1,\dotso, d$. Therefore thanks to Sylvester's determinant identity, we get that 
\begin{align*}
   \textstyle  \det\left(\frac{1}{2}\nabla^2 w(x) \right)= \prod_{i=1}^d\frac{1}{x_i^2} \left(1+\frac{\sum_{i=1}^d x_i^2}{(1-\sum_{i=1}^d x_i)^2} \right).
\end{align*}
It is straightforward that $e^{w(x)}=\frac{1}{(1-\sum_{i=1}^d x_i)^2}\prod_{i=1}^d \frac{1}{x_i^2}$, 
and therefore
\begin{align*}
 \textstyle    \frac{1}{d+1} \leq \frac{\det\left(\frac{1}{2}\nabla^2 w(x) \right)}{e^{w(x)}}= \sum_{i=1}^d x_i^2+ \left(1-\sum_{i=1}^d x_i\right)^2 \leq 1.
\end{align*}
Since $\lim_{x \to \partial\Delta} w(x)=+\infty$, the inequality above concludes our first claim. Finally, as $\R_+ \ni z \mapsto -\log(z)$ is a convex function, Jensen's inequality shows that 
\begin{align*}
    w(x) \geq- 2(d+1) \log( 1/(d+1))=2(d+1)\log(d+1) > \log(d+1),
\end{align*}
and thus $\inf_{x\in\Delta} \{w(x)-\log(d+1)\}> 0$.
\end{proof}

\subsection{Existence} 
In this subsection, we establish the  existence of \eqref{eq:Monge} in Proposition~\ref{prop:existence}, and show that $\nabla g: \Delta \to \mathbb R^d$ is a diffeomorphism in Lemma~\ref{lem:diffeomorphism}.

\begin{proposition}\label{prop:existence}
    There exists a smooth solution $g$ to \eqref{eq:Monge}. 
\end{proposition}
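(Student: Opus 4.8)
The plan is to obtain $g$ as a limit of solutions to Dirichlet problems on an exhausting sequence of smooth, strictly convex subdomains, using the barrier function $w$ from Lemma~\ref{lem:maincomponent} to control the limit near $\partial\Delta$. Concretely, first I would fix a sequence of smooth, uniformly convex domains $\Omega_k\Subset\Delta$ with $\Omega_k\uparrow\Delta$. On each $\Omega_k$ one wants to solve the Dirichlet problem $\det\bigl(\tfrac12\nabla^2 g_k\bigr)=e^{g_k}$ in $\Omega_k$ with boundary data $g_k=w$ on $\partial\Omega_k$ (where $w$ is the explicit barrier, finite on the compactly contained $\Omega_k$). Since the right-hand side $e^{g_k}$ depends on the unknown, I would first reformulate this: the map $u\mapsto \log\det(\tfrac12\nabla^2 u)$ is concave on convex functions, and the equation $\log\det(\tfrac12\nabla^2 u)=u$ has a monotone structure in $u$ (the right-hand side is increasing), so existence and uniqueness of a smooth convex solution $g_k$ on the smooth uniformly convex domain $\Omega_k$ follows from the classical theory of Caffarelli--Nirenberg--Spruck \cite{CNS1} (applied after a standard method-of-continuity / fixed-point argument to handle the $u$-dependence, or by citing the solvability of $\det\nabla^2 u = f(x,u)$ with $f$ increasing in $u$, e.g.\ via \cite{GuJi04}).

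Next I would establish locally uniform a priori bounds on $g_k$ independent of $k$, which is the heart of the argument. \textbf{Upper and lower barriers:} By Lemma~\ref{lem:maincomponent}, $w$ is a supersolution and $w-\log(d+1)$ is a subsolution to \eqref{eq:Monge}. The comparison principle for the elliptic operator $u\mapsto\log\det(\tfrac12\nabla^2 u)-u$ (valid because this operator is proper/monotone in $u$ on convex functions) then gives $w-\log(d+1)\le g_k\le w$ on $\Omega_k$: for the upper bound, $g_k=w$ on $\partial\Omega_k$ and $w$ is a supersolution, so $g_k\le w$; for the lower bound, $w-\log(d+1)\le w=g_k$ on $\partial\Omega_k$ and $w-\log(d+1)$ is a subsolution, so $g_k\ge w-\log(d+1)$. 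This pins $g_k$ between two $k$-independent functions, both of which blow up exactly like $w$ at $\partial\Delta$. \textbf{Interior estimates:} On any fixed compact $K\Subset\Delta$, for $k$ large $K\subset\Omega_k$ and $g_k$ is bounded above and below on a slightly larger compact set by the barriers; hence $\det(\tfrac12\nabla^2 g_k)=e^{g_k}$ is bounded above and below by positive constants on that set. The Caffarelli interior $C^{2,\alpha}$ (and then Schauder bootstrap to $C^\infty$) estimates for the Monge--Amp\`ere equation then give, for every $K\Subset\Delta$ and every $m$, a bound $\|g_k\|_{C^m(K)}\le C(K,m)$ uniform in $k$.

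With these uniform interior estimates, Arzel\`a--Ascoli and a diagonal argument produce a subsequence $g_{k}\to g$ in $C^\infty_{loc}(\Delta)$, and $g$ is a smooth convex solution of $\log\det(\tfrac12\nabla^2 g)=g$ in $\Delta$. It remains to verify the boundary condition $g=+\infty$ on $\partial\Delta$: this is immediate from passing to the limit in $w-\log(d+1)\le g_k\le w$, which yields $w-\log(d+1)\le g\le w$ on $\Delta$, and since $w(x)\to+\infty$ as $x\to\partial\Delta$ we get $g(x)\to+\infty$ as $x\to\partial\Delta$, i.e.\ $g$ satisfies \eqref{eq:Monge}. (Uniqueness of the smooth solution, needed for the full Theorem~\ref{thm:quantitive}, follows again from the comparison principle applied to two solutions, using that any solution is trapped between the same barriers near $\partial\Delta$ so their difference attains an interior maximum.) \textbf{Main obstacle:} the delicate point is the $k$-uniformity of the interior regularity estimates — one must ensure the constants in the Monge--Amp\`ere interior $C^{2,\alpha}$ estimates depend only on the distance to $\partial\Delta$ and on the (k-independent) upper/lower bounds for $e^{g_k}$ coming from the barriers, not on $\Omega_k$ or on boundary data; this is exactly what Caffarelli's interior estimates provide, but it is worth stating carefully. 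A secondary technical point is the solvability of the Dirichlet problem with the $u$-dependent right-hand side on each $\Omega_k$, which needs the monotonicity in $u$ to run the method of continuity with closed a priori bounds.
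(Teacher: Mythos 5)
Your proposal follows essentially the same route as the paper: exhaust $\Delta$ by smooth strictly convex subdomains, solve the Dirichlet problem on each via \cite{CNS1}, sandwich $g_k$ between the barriers $w-\log(d+1)$ and $w$ (Lemma~\ref{lem:maincomponent}) by comparison, obtain $k$-uniform interior estimates, and pass to a $C^\infty_{loc}$ limit that inherits the infinite boundary value from the sandwich. The paper differs in two cosmetic and one substantive respect. Cosmetically, it takes $\Omega_n=\{w<n\}$ as the exhausting sets (so the boundary data is the constant $n$, which tidies a few computations), and it uses Evans--Krylov rather than a Schauder bootstrap for the $C^{2,\alpha}$ upgrade. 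Substantively, the paper does \emph{not} invoke Caffarelli's interior regularity as a black box: it proves the $k$-uniform interior $C^1$ and $C^2$ bounds directly by maximum-principle / Pogorelov-type arguments (Lemmas~\ref{lem:e1}--\ref{lem:e2}), which is self-contained and is in fact essential for the later, more refined boundary asymptotics.

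Your phrasing of the ``main obstacle'' is slightly imprecise: Caffarelli's interior $C^{2,\alpha}$ estimates do \emph{not} depend only on $\operatorname{dist}(K,\partial\Delta)$ and on the two-sided bounds for $e^{g_k}$; they depend on the modulus of strict convexity of $g_k$, i.e.\ on the geometry of the sections of $g_k$, and uniform strict convexity must be argued (recall the Pogorelov counterexamples with smooth positive pinched right-hand side in $d\ge 3$). The missing ingredient is exactly what the barrier sandwich supplies: from $w-\log(d+1)\le g_k\le w$ one gets $\{w<c\}\subset\{g_k<c\}\subset\{w<c+\log(d+1)\}$, so the sections of $g_k$ are uniformly comparable to level sets of the explicit $w$, independent of $k$. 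Once that is said, your route through Caffarelli's theory closes. Alternatively, applying the Pogorelov interior estimate to $g_k-c$ on the sublevel set $\{g_k<c\}$ (which it vanishes on at the boundary) gives the uniform $C^2$ bound more directly; this is what the paper's Lemma~\ref{lem:e2} does, with the cutoff $(C-g)^{2d}$ playing the role of $(c-g_k)$.
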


The proof is divided into several steps. For any $n \in \mathbb N$, we denote $\Omega_n=\{x \in \Delta : \, w(x) < n \}$. It is clear that $\cup_{n \in \mathbb N} \Omega_n= \Delta$. Due to the explicit formula of $w$ \eqref{eq:w}, $\Omega_n$ is strictly convex with smooth boundary. Therefore, according to \cite{CNS1}, for each $n$ there exists a smooth solution $g^n$ to the elliptic equation 
\begin{align}\label{eq:restriction}
    \begin{cases}
        g^n(x)= \log \det \left( \frac{1}{2} g^n(x)\right),  \hspace{1.4cm} x \in \Omega_n, \\
        g^n(x)= w(x), \hspace{3cm} x \in \partial \Omega_n. 
    \end{cases}
\end{align}
As in Lemma~\ref{lem:maincomponent}, it can be verified that $w|_{\Omega_n}$ is a supersolution to \eqref{eq:restriction}
while $w|_{\Omega_n}-\log(d+1)$ is a subsolution to \eqref{eq:restriction}. Therefore, the comparison principle for elliptic equations yields that $w - \log(d+1) \leq g^n \leq w \leq n$ over $\Omega_n$.

 Using the barrier function constructed in  Lemma~\ref{lem:maincomponent}, we derive local $C^1, C^2$ estimates in Lemma~\ref{lem:e1} and Lemma~\ref{lem:e2} respectively, and therefore get the compactness of $\{g^n\}_{n \in \mathbb N}$. We prove that a limit of $\{g^n\}_{n \in \mathbb N}$ is a solution to \eqref{eq:Monge}, and show the uniqueness in Corollary~\ref{cor:val_function_fg}.

\begin{lemma}\label{lem:e1}
    For any $N \in \mathbb N$, we have that
\begin{align}\label{eq:C2estimate}
\lVert g^n \rVert_{C^1(\bar \Omega_N)} \leq K(N) \quad \text{for all $n \geq N+10$},
\end{align}
where $K(N)$ is a positive constant depending only on $N$.
\end{lemma}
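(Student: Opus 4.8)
The plan is to derive an interior $C^1$ estimate for the family $\{g^n\}$ on the compact set $\bar\Omega_N$ by combining the already-available uniform $C^0$ bound with a gradient estimate specific to Monge-Amp\`ere equations. Recall from the discussion preceding the lemma that $w-\log(d+1)\le g^n\le w$ on $\Omega_n$, so on $\bar\Omega_{N+5}$ (say) we have a uniform bound $|g^n|\le \sup_{\bar\Omega_{N+5}}w =: C_0(N)$, valid for all $n\ge N+10$ since $\bar\Omega_{N+5}\subset\Omega_n$ for such $n$. The missing ingredient is thus control of $\nabla g^n$; once we have $\sup_{\bar\Omega_N}|\nabla g^n|\le K(N)$ the claim follows.

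For the gradient estimate I would proceed via the standard interior gradient estimate for convex solutions of Monge-Amp\`ere-type equations. Concretely, fix a slightly larger region, say $\Omega_{N+1}$ with $\bar\Omega_N\subset\Omega_{N+1}\subset\bar\Omega_{N+1}\subset\Omega_n$. Since $g^n$ is convex (its Hessian is positive definite because the equation forces $\det(\tfrac12\nabla^2 g^n)=e^{g^n}>0$ and $g^n\in C^2$, so $\nabla^2 g^n$ cannot be singular), for any $x\in\bar\Omega_N$ and any unit vector $\xi$, convexity gives
\begin{align*}
\partial_\xi g^n(x)\le \frac{g^n(x+\delta\xi)-g^n(x)}{\delta}\le \frac{2C_0(N)}{\delta},
\end{align*}
where $\delta=\delta(N)>0$ is the distance from $\bar\Omega_N$ to $\partial\Omega_{N+1}$; applying this with $\xi$ and $-\xi$ yields $|\nabla g^n(x)|\le 2C_0(N)/\delta =: K(N)$. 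This is the cleanest route and avoids any maximum-principle computation for the gradient; it uses only the uniform $C^0$ bound and convexity, both of which are in hand. The constant $K(N)$ depends only on $N$ through $C_0(N)$ and the geometry of the nested domains $\Omega_N\subset\Omega_{N+1}$, which are fixed sublevel sets of the explicit function $w$, hence $n$-independent.

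The main (minor) obstacle is bookkeeping: one must be careful that the enlarged domain $\Omega_{N+1}$ (or whatever intermediate sublevel set one picks) is contained in $\Omega_n$ for all $n\ge N+10$, which is where the ``$+10$'' slack in the statement is used — it leaves comfortable room between $\partial\Omega_N$ and $\partial\Omega_n$. One should also note that $\Omega_N$ is nonempty and open with $\bar\Omega_N$ compact inside $\Delta$ (immediate from $\lim_{x\to\partial\Delta}w(x)=+\infty$ in Lemma~\ref{lem:maincomponent}), so that $\delta(N)>0$. No elliptic regularity beyond the a priori $C^0$ bound is needed for this particular lemma; the barrier function $w$ enters only through providing that bound. (The genuinely hard estimates — the $C^2$ bound of Lemma~\ref{lem:e2} and the boundary asymptotics — are where the barrier construction does real work, but for the $C^1$ estimate convexity does everything.)
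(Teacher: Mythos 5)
Your proof is correct, but it takes a genuinely different and more elementary route than the paper. The paper proves the gradient bound via a Pogorelov-type maximum principle: it maximizes the auxiliary quantity $(C-w(x))\,g^n_\xi(x)\,e^{g^n(x)}$ over $\bar\Omega_C$ with $C=N+10$, uses that the boundary factor $C-w$ vanishes on $\partial\Omega_C$ to force an interior maximizer $x_0$, and then exploits the first-order condition together with the convexity inequality $g^n_{\xi\xi}(x_0)>0$ to deduce $g^n_\xi(x_0)\le w_\xi(x_0)/(C-w(x_0))$. You instead observe that the already-established two-sided barrier $w-\log(d+1)\le g^n\le w$ gives a uniform $C^0$ bound on $\bar\Omega_{N+1}$, and then invoke the standard fact that a convex function uniformly bounded on a neighbourhood of a compact set has its gradient controlled there by the oscillation divided by the distance to the boundary. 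This is cleaner and self-contained for the $C^1$ statement; the trade-off is that the paper's barrier-and-maximum-principle template is precisely what gets reused, with heavier machinery, in Lemma~\ref{lem:e2} and again in Lemmas~\ref{lem:firstorder}--\ref{lem:secondorder} (where convexity alone cannot produce second-order or rescaled estimates), so the paper's choice buys methodological uniformity across the section. One small point of rigor worth tightening in your write-up: the positivity (not merely nonsingularity) of $\nabla^2 g^n$ is not a consequence of $\det(\tfrac12\nabla^2 g^n)>0$ alone, but of the fact that \cite{CNS1} produces the \emph{convex} solution of the Dirichlet problem; with that in hand, your finite-difference argument is airtight.
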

\begin{proof}
    From the construction of $g^n$ and the comparison principle,  $g^n(x) \leq w(x) \leq N+10$ over $\bar \Omega_N$. Let us estimate the first order derivative.

    Fix any $\xi \in \mathcal{S}^{d-1}$ and take $C=N+10$. For any $n \geq N+10$, we consider the problem 
    \begin{align*}
    \textstyle     \max_{x \in \bar\Omega_C}(C-w(x)) \, g^n_{\xi}(x) \, e^{g^n(x)},
    \end{align*}
    where a maximizer, denoted by $x_0$, must be obtained in the interior $\Omega_C$ since $C-w=0$ on the boundary $\partial \Omega_C$. It is clear that $g^n_{\xi}(x_0) >0$. Let us take 
    \begin{align*}
        \rho(x)= \log (C-w(x))+\log(g^n_{\xi}(x)) +g^n(x),
    \end{align*}
    which also obtains the maximum at $x_0$. The first order condition in direction $\xi$ yields that 
    \begin{align*}
        \textstyle 0=\frac{-w_{\xi}(x_0)}{C-w(x_0)}+\frac{g^n_{\xi\xi}(x_0)}{g^n(\xi)(x_0)}+g^n_{\xi}(x_0),
    \end{align*}
    and hence at $x_0$, $g^n_{\xi}(x_0) \leq \frac{w_{\xi}(x_0)}{C-w(x_0)} $. Therefore for any $x \in \Omega_C$, we have 
    \begin{align*}
        (C-w(x)) \, g^n_{\xi}(x) \, e^{g^n(x)} \leq (C-w(x_0) \, g^n_{\xi}(x_0) \, e^{g^n(x_0)} \leq w_{\xi}(x_0) \, e^{g^n(x_0)}.
    \end{align*}
    Restricting $x \in \Omega_N$ on the left hand side, we have that $(C-w(x)) \geq 10$, and $g^n(x) \geq w(x)-\log(d+1) \geq \inf_{y \in \Delta} w(y)- \log(d+1)>0 $ from Lemma~\ref{lem:maincomponent}. Together with $g^n(y) \leq N+10$ for $y \in \Omega_C$, 
    \begin{align*}
     \textstyle  \sup_{x \in \bar\Omega_N}  g_{\xi}^n(x) \leq K(N) \sup_{y \in \Omega_C}  w_{\xi}(y). 
    \end{align*}
    Note that the right hand side is independent of $n$.

    To conclude, $w(y)\leq C $ implies $y_i \geq e^{-C/2}, (1-\sum_{j=1}^d y_j) \geq e^{-C/2}$, $i=1,\dotso, d$, and hence 
    \begin{align*}
      \textstyle   \left| \partial_{i} w(y)=\frac{2}{1-\sum_{j=1}^d y_j}-\frac{2}{y_i} \right| \leq 2e^{C/2}=2 e^{(N+10)/2} \quad \text{for all $y \in \Omega_C$}. 
    \end{align*}
  
\end{proof}

\begin{lemma}\label{lem:e2}
     For any $N \in \mathbb N$, we have that
\begin{align}\label{eq:C2estimate}
\lVert g^n \rVert_{C^2(\bar \Omega_N)} \leq K(N) \quad \text{for all $n \geq N+10$},
\end{align}
where $K(N)$ is a positive constant depending only on $N$.
\end{lemma}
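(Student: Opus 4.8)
The plan is to reduce the $C^2$ bound to a uniform (in $n$) upper bound on $\lambda_{\max}(\nabla^2 g^n)$ over $\bar\Omega_N$, and to obtain the latter by a Pogorelov-type maximum-principle estimate. For the reduction, note that $g^n$ is convex, so $\nabla^2 g^n>0$, and that rewriting \eqref{eq:restriction} as $\det\nabla^2 g^n=2^d e^{g^n}$, together with the comparison bound $w-\log(d+1)\le g^n\le w$ on $\Omega_n$ (obtained right before Lemma~\ref{lem:e1}), $w\le N+10$ on $\bar\Omega_{N+10}$, and $\inf_{x\in\Delta}\{w(x)-\log(d+1)\}>0$ from Lemma~\ref{lem:maincomponent}, shows that $\det\nabla^2 g^n$ stays between two positive constants depending only on $N$ on $\bar\Omega_{N+10}$. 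Since $\lambda_{\min}(\nabla^2 g^n)\ge\det\nabla^2 g^n\,/\,\lambda_{\max}(\nabla^2 g^n)^{\,d-1}$, an upper bound $\sup_{\bar\Omega_N}\lambda_{\max}(\nabla^2 g^n)\le K(N)$ yields a two-sided bound on $\nabla^2 g^n$ over $\bar\Omega_N$, and combined with $|g^n|\le N+10$ and the $C^1$ estimate of Lemma~\ref{lem:e1} it gives $\|g^n\|_{C^2(\bar\Omega_N)}\le K(N)$.

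For the eigenvalue bound I would use the barrier $w$ of Lemma~\ref{lem:maincomponent} as a cutoff, one derivative higher than in Lemma~\ref{lem:e1}. Put $C:=N+10$, so $C-w>0$ on $\Omega_C$ and $C-w\ge 10$ on $\bar\Omega_N$, and for a constant $a=a(N)>0$ to be fixed later consider on $\bar\Omega_C$ the function
\[
\Psi(x):=\max_{\xi\in\mathcal{S}^{d-1}}\;(C-w(x))\,g^n_{\xi\xi}(x)\,e^{\frac{a}{2}|\nabla g^n(x)|^2}.
\]
Since $C-w$ vanishes on $\partial\Omega_C$ while $g^n_{\xi\xi}>0$ in $\Omega_C$, $\Psi$ attains its maximum at an interior point $x_0\in\Omega_C$, whose maximising direction $\xi_0$ is the top unit eigenvector of $\nabla^2 g^n(x_0)$, so that $g^n_{\xi_0\xi_0}(x_0)=\lambda_{\max}(\nabla^2 g^n(x_0))$; consequently the smooth function $x\mapsto(C-w(x))\,g^n_{\xi_0\xi_0}(x)\,e^{\frac{a}{2}|\nabla g^n(x)|^2}$ also has a local maximum at $x_0$. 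Writing $(g^{ij})$ for the inverse of the Hessian $(\partial_{ij}g^n)$ and $L:=\sum_{i,j}g^{ij}\partial_{ij}$ for the linearised operator, differentiating the logarithmic form $\log\det\nabla^2 g^n=g^n+d\log2$ of the equation once and twice gives $L(\partial_k g^n)=\partial_k g^n$, $L(g^n)=d$,
\[
L\big(g^n_{\xi_0\xi_0}\big)=g^n_{\xi_0\xi_0}+\sum_{k,l,p,q}g^{kp}g^{lq}\,\partial_{kl\xi_0}g^n\,\partial_{pq\xi_0}g^n\;\ge\;g^n_{\xi_0\xi_0},
\]
and $L(|\nabla g^n|^2)=2|\nabla g^n|^2+2\,\Tr(\nabla^2 g^n)$, while $L(w)=g^{ij}w_{ij}$ with $\nabla^2 w\ge 2\,I_{d\times d}$ and $\nabla w,\nabla^2 w$ bounded on $\bar\Omega_C$ (cf.\ the proof of Lemma~\ref{lem:e1}).

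At $x_0$ the function above has vanishing gradient and non-positive $L$-value. Expanding the latter with the identities just listed and using the first-order condition to eliminate $\nabla g^n_{\xi_0\xi_0}$, the non-negative third-order term produced by differentiating $\log\det$ absorbs the third-order terms coming from the test function, while the exponential factor supplies the decisive favorable term proportional to $a\,\Tr(\nabla^2 g^n(x_0))\ge a\,\lambda_{\max}(\nabla^2 g^n(x_0))$; choosing $a=a(N)$ large enough and using the bounds on $g^n,\nabla g^n,\nabla w,\nabla^2 w$ over $\bar\Omega_C$ one is left with $(C-w(x_0))\,\lambda_{\max}(\nabla^2 g^n(x_0))\le K(N)$, that is, $\Psi(x_0)\le K(N)$. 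Since $\Psi(x_0)=\max_{\bar\Omega_C}\Psi\ge(C-w(x))\,\lambda_{\max}(\nabla^2 g^n(x))\,e^{\frac{a}{2}|\nabla g^n(x)|^2}$ for every $x$, and $C-w(x)\ge 10$ on $\bar\Omega_N$, this gives $\lambda_{\max}(\nabla^2 g^n(x))\le K(N)$ for all $x\in\bar\Omega_N$, as required by the first paragraph.

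The step I expect to be the main obstacle is precisely this absorption of the third-order terms: verifying, uniformly in $n$, that after using the first-order condition and the concavity of $\log\det$ the term coming from $e^{\frac{a}{2}|\nabla g^n|^2}$ dominates all the remaining terms, which may in particular require replacing the cutoff $C-w$ by a suitable power $(C-w)^\beta$. This is the technical core of every interior second-order estimate for Monge--Amp\`ere equations; here it is made slightly more delicate by the boundary data of $g^n$ on $\partial\Omega_n$ being the non-affine function $w$, so that one must carry along the (bounded, $N$-dependent) terms involving $\nabla w$ and $\nabla^2 w$, in the spirit of \cite{GuJi04}.
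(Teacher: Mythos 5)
Your reduction in the first paragraph (from a $C^2$ bound to an upper bound on $\lambda_{\max}(\nabla^2 g^n)$, with the lower bound supplied by $\det\nabla^2 g^n=2^d e^{g^n}$) is correct and matches the paper's logic. The idea of a Pogorelov-type interior estimate is also the right one, and the paper's proof is precisely such an estimate (with cutoff $(C-g^n)^{2d}$ and auxiliary $e^{|x|^2/4}$). However, your specific test function has a gap at its core, and I do not see how to repair it within the route you describe.

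The problem is the choice of cutoff $(C-w)$ rather than $(C-g^n)$. When you compute $L\bigl(\log(C-w)\bigr)$, with $L=\sum_{i,j}g^{ij}\partial_{ij}$ the linearized operator, you get
\[
L\bigl(\log(C-w)\bigr)=-\frac{g^{ij}w_{ij}}{C-w}-\frac{g^{ij}w_iw_j}{(C-w)^2}.
\]
Because $w$ is only a super-solution and not a solution of the Monge--Amp\`ere equation, $g^{ij}w_{ij}$ is not the constant $d$. Since $\nabla^2 w\geq 2\,I_{d\times d}$ one has $g^{ij}w_{ij}\geq 2\,\Tr\bigl((\nabla^2 g^n)^{-1}\bigr)$, so the first term above produces a \emph{negative} contribution of order $-\Tr\bigl((\nabla^2 g^n)^{-1}\bigr)$. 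Your ``good'' term from the auxiliary factor, $a\,\Tr(\nabla^2 g^n)$, does not control this: under the sole constraint $\det\nabla^2 g^n=2^de^{g^n}$, the quantity $\Tr\bigl((\nabla^2 g^n)^{-1}\bigr)$ can be as large as $\sim \lambda_{\max}^{\,d-1}/\det$ (take one eigenvalue tiny and the rest comparable to $\lambda_{\max}$), so for $d\geq 3$ it can dominate $a\,\Tr(\nabla^2 g^n)\sim a\lambda_{\max}$. The inequality you get from $L(\rho)\leq 0$ at the maximum is then \emph{consistent} with $\lambda_{\max}\to\infty$ and does not close. Raising $(C-w)$ to a power $\beta$ only multiplies this bad term by $\beta$ and does not change its nature.

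The paper avoids precisely this obstruction by putting the \emph{solution} $g^n$ in the cutoff: differentiating $\log(C-g^n)$ yields $L(-g^n)=-g^{ij}g^n_{ij}=-d$, a bounded constant, because $g^n$ solves the equation. The paper's auxiliary factor $e^{|x|^2/4}$ then produces the good term $\tfrac12\Tr\bigl((\nabla^2g^n)^{-1}\bigr)$, which is the natural dual quantity in this setting: together with the determinant constraint it translates into an upper bound on $\lambda_{\max}$ (Step~II of the paper's proof). Your classical Pogorelov auxiliary $e^{\frac{a}{2}|\nabla g^n|^2}$ could in principle also be made to work, but only in combination with a cutoff built from $g^n$ itself — this is what makes the constant term $L(g^n)=d$ and the gradient terms from the first-order condition absorbable. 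So the missing idea is not the Pogorelov framework (you have that right), but the specific exploitation of the PDE $g^n=\log\det(\tfrac12\nabla^2g^n)$ inside the cutoff, which is what makes the maximum-principle computation close.
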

\begin{proof}
In Lemma~\ref{lem:e1}, we already have  $\lVert g^n \rVert_{C^1(\bar \Omega_N)} \leq K(N)$. We prove that for any $\xi \in \mathcal{S}^{d-1}$, $$\textstyle \sup_{x \in \bar \Omega_N}|g^n_{\xi \xi}(x)| \leq K(N).$$ 

In the rest of proof, let us omit the superscript $n$ from $g^n$. Take $C=N+10$, we consider the maximization problem 
\begin{align*}
   \textstyle  \sup_{x \in \bar \Omega_C, \, \xi \in \mathcal{S}^{d-1}} (C-g(x))^{2d}\, g_{\xi \xi}(x) \, e^{|x|^2/4}.
\end{align*}
Without loss of generality, we assume the maximizer on the left hand is obtained at $x_0 \in \Omega_C$, and $\xi=e_1$. Let us denote 
\begin{align*}
    \rho(x)=2d \log(C-g(x))+ \log(g_{11}(x))+ \frac{|x|^2}{4}.
\end{align*}
Since $\mathbb R_+ \ni r \mapsto \log r$ is an increasing function, $x_0$  is a also maximizer of $\max_{x \in  \bar\Omega_C} \rho(x)$.  Choosing another orthonormal coordinate system $(x_1,\dotso,x_d)$ of $\mathbb R^d$ with $e_1$ being the direction of $x_1$, without loss of generality, we can assume that $\nabla^2 g(x_0)$ is diagonal, and denote $\lambda_i= g_{ii}(x_0)$ for each $i=1,\dotso, d$. Let us also denote by $(g^{ij})_{i,j=1}^d$ the inverse matrix of $\nabla^2 g(x_0)$. From now on, all the derivatives are taken at $x_0$, and we adopt the Einstein summation notation.

 Taking derivatives on both sides of the Monge-Amp\`{e}re $g= \log \det \left(\frac{1}{2} \nabla^2 g\right)$, we get that 
\begin{align}\label{eq:2orderMA'}
   g_i&= g^{lk}g_{lki}, \quad  \quad\quad\quad\quad\quad\quad \ \ \ i=1,\dotso, d,  \notag \\
   g_{ij}&=g^{lk}g_{lkij}-g^{lm}g^{kn}g_{mnj}g_{lki}, \quad i,j=1,\dotso, d.
\end{align}
As $\rho(x_0)=\sup_{x \in \bar \Omega_C} \rho(x)$, the first order condition implies that 
\begin{align}\label{eq:1order'}
   \textstyle  0=\rho_i=-\frac{2d g_i}{C-g}+\frac{g_{11i}}{g_{11}}+\frac{x_i}{2},  \quad  \quad i=1,\dotso,d.
\end{align}
Taking derivative again, 
\begin{align*}
   \textstyle  \rho_{ij}=-\frac{2d g_{ij}}{C-g}-\frac{2d g_ig_j}{(C-g)^2}+\frac{g_{11ij}}{\lambda_1}-\frac{g_{11i}g_{11j}}{\lambda_1^2} +\frac{1}{2} \delta_{ij}, \quad \quad i,j=1,\dotso, d,
\end{align*}
where $\delta_{ij}$ is the Kronecker symbol. The maximality condition implies that as a matrix $\nabla^2 \rho$ is negative semidefinite, and hence due to the convexity of $g$
\begin{align*}
    \textstyle 0 \geq g^{ij} \rho_{ij}=- \frac{2d^2}{C-g}-\sum_{i=1}^d \frac{2d g_i^2 }{\lambda_i (C-g)^2}+\frac{g^{ii }g_{11ii}}{\lambda_1}-\sum_{i=1}^d \frac{g_{11i}^2}{\lambda_1^2\lambda_i}+\sum_{i=1}^d \frac{1}{2\lambda_i }.
\end{align*}

Setting  $i=j=1$ in \eqref{eq:2orderMA'}, we get that $g^{ii}g_{11ii}=g_{11}+g^{ll}g^{kk}g_{lk1}^2$, and hence 
\begin{align*}
    0 \geq &\textstyle  - \frac{2d^2}{C-g}-\sum_{i=1}^d \frac{2d g_i^2 }{\lambda_i (C-g)^2}+1+\sum_{l,k=1}^d \frac{g_{lk1}^2}{\lambda_1\lambda_l \lambda_k}-\sum_{i=1}^d \frac{g_{11i}^2}{\lambda_1^2\lambda_i}+\sum_{i=1}^d \frac{1}{2\lambda_i } \\
    \geq &\textstyle  - \frac{2d^2}{C-g}-\sum_{i=1}^d \frac{2d g_i^2 }{\lambda_i (C-g)^2}+1+\sum_{k=2}^d \frac{g_{11k}^2}{\lambda_1^2 \lambda_k}+\sum_{i=1}^d \frac{1}{2\lambda_i }.
\end{align*}
According to \eqref{eq:1order'}, it can be easily seen that for $k=2,\dotso, d$, 
\begin{align*}
  \textstyle   -\frac{2d g_k^2 }{\lambda_k (C-g)^2}=-\frac{1}{2d \lambda_k}\left(\frac{g_{11k}}{\lambda_1}+ \frac{x_k}{2} \right)^2 \geq -\frac{1}{d \lambda_k}\left(\frac{g_{11k}^2}{\lambda_1^2}+\frac{  x_k^2}{4} \right).
\end{align*}
Therefore, by direct computation we get that 
\begin{align}\label{eq:step1'}
   \textstyle  \frac{2d^2}{C-g} +\frac{2d g_1^2 }{\lambda_1 (C-g)^2} \geq 1+\sum_{k=2}^d \left( \frac{g_{11k}^2}{\lambda_1^2 \lambda_k}-\frac{1}{d}\frac{g_{11k}^2}{\lambda_1^2 \lambda_k}\right) +\sum_{i=1}^d \frac{1}{4\lambda_i} \geq \sum_{i=1}^d \frac{1}{4\lambda_i}. 
\end{align}

Let us prove a lower bound for $\lambda_1$ using the Monge-Amp\`{e}re equation. Note that $$\lambda_1^d \geq \lambda_1 \dotso \lambda_d=\det \nabla^2 g(x_0)=2^d e^{g(x_0)}\geq 2^d e^{ w (x_0)- \log(d+1)} \geq 2^d,$$
according to Lemma~\ref{lem:maincomponent}. Hence we conclude that $\lambda_1 \geq K$ for an absolute constant $K$ which is allowed to change from line to line, and together with \eqref{eq:step1'},
\begin{align*}\textstyle 
\frac{1}{C-g} +\frac{ g_1^2 }{(C-g)^2} \geq K \sum_{i=1}^d \frac{1}{\lambda_i}.
\end{align*}

Now, we would like to derive an upper bound of $\lambda_1$,
\begin{align*}\textstyle 
    \sum_{i=1}^d \frac{1}{\lambda_i} \geq \sum_{i=2}^d \frac{1}{\lambda_i} \geq (d-1) \sqrt[d-1]{\frac{1}{\lambda_2 \dotso \lambda_d }}=(d-1) \sqrt[d-1]{\frac{\lambda_1}{\det \nabla^2 g }} =(d-1) \sqrt[d-1]{\frac{\lambda_1}{2^d e^{g} }}.
\end{align*}
Therefore it can be easily seen that 
\begin{align*}\textstyle 
   \lambda_1 \leq e^g K \left(\frac{1}{C-g} +\frac{ g_1^2 }{(C-g)^2} \right)^{d-1},
\end{align*}
and hence together with Lemma~\ref{lem:e1}
\begin{align*}
   \left(C-g(x_0) \right)^{2d} \, \lambda_1 \, e^{\frac{|x_0|^2}{4}} \leq K(C) \left(1+ g_1^{2d-2}\right) \leq K(N).
\end{align*}
As a result,  $\max_{x \in \bar \Omega_N, \, \xi \in \mathcal{S}^{d-1}} (C-g(x))^{2d} \,  g_{\xi\xi}(x) \, e^{\frac{|x|^2}{4}} \leq  K(N)$. Since $C=N+10$, $g(x) \leq N$ for any $x \in \Omega_N$, and $e^{\frac{|x|^2}{4}} \geq 1$, we conclude that $g_{\xi\xi}(x) \leq K(N)$ for every $x \in \bar\Omega_N$. 

\end{proof}

\begin{proof}[Proof of Proposition~\ref{prop:existence}]
Fixing $\alpha \in (0,1)$, according to the Evans–Krylov theorem, see  \cite{CaCa95} and \cite[Theorem A.42]{Fi17}, Lemma~\ref{eq:2orderMA'} yields that, for each $N \in \mathbb N$,
\begin{align*}
    \lVert g^n \rVert_{C^{2,\alpha}(\bar \Omega_N)} \leq K(N) \quad \text{for any $n \geq N+10$}.
\end{align*}
Since $C^{2,\alpha}(\bar \Omega_N)$ is compactly embedded in $C^{2}(\bar \Omega_N)$, there exists a subsequence of $(g^{n_k})_{k \in \mathbb N}$ and a $C^2$ function $g:\Delta \to \mathbb R$  such that $\lim\limits_{k \to \infty}\lVert g^{n_k} -g \lVert_{C^2(\bar \Omega_N)} \to 0$ for each $N \in \mathbb N$. Then it can be easily seen that $g$ is a solution to \eqref{eq:Monge}.
\end{proof}

Denoting $f(t)=d(1-t)\log(1-t)$,  Proposition~\ref{prop:uniqueness} and Proposition~\ref{prop:existence} yield that 

\begin{corollary}\label{cor:val_function_fg}
$v(t,x)= (1-t)g(x)+f(t)$, $\forall \, (t,x) \in [0,1) \times \Delta$. Moreover, $g$ is the unique solution to \eqref{eq:Monge}, and we have the inequality $w(x) -\log(d+1) \leq g(x) \leq w(x)$, $\forall \, x \in \Delta$.
\end{corollary}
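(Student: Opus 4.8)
The plan is to combine the viscosity uniqueness result (Proposition~\ref{prop:uniqueness}), the existence of a smooth solution $g$ to \eqref{eq:Monge} (Proposition~\ref{prop:existence}), the scaling relation of Lemma~\ref{lem:boundary-terminal}, and the two-sided bound $w-\log(d+1)\le g^n\le w$ on the exhausting domains $\Omega_n$. First I would upgrade the smooth solution $g$ of \eqref{eq:Monge} into a viscosity solution of the parabolic equation \eqref{eq:HJB}: define $V(t,x):=(1-t)g(x)+f(t)$ with $f(t)=d(1-t)\log(1-t)$, and check by direct differentiation that $-\partial_t V(t,x)=g(x)-d\log(1-t)-d=\log\det(\tfrac12\nabla^2 V(t,x))+d$, using $\nabla^2 V(t,x)=(1-t)\nabla^2 g(x)$ and the equation $g=\log\det(\tfrac12\nabla^2 g)$. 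Since $g$ is smooth and $\nabla^2 g>0$ (the Monge--Amp\`ere equation forces positive definiteness), $V$ is convex in space and $C^2$ in the interior, so it is a classical, hence viscosity, solution; and from $g\to+\infty$ on $\partial\Delta$ and $f(1)=0$ one reads off the boundary and terminal conditions required of a viscosity solution to \eqref{eq:HJB}. By Proposition~\ref{prop:uniqueness}, $v$ is the unique viscosity solution of \eqref{eq:HJB}, whence $v(t,x)=V(t,x)=(1-t)g(x)+f(t)$ for all $(t,x)\in[0,1)\times\Delta$. In particular $v(0,x)=g(x)$, and the decomposition is exactly that of Lemma~\ref{lem:boundary-terminal}, so everything is consistent.

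For the uniqueness of solutions to \eqref{eq:Monge}: if $\tilde g$ is another (say smooth, or merely such that the associated $(1-t)\tilde g(x)+f(t)$ is an admissible viscosity solution) solution of \eqref{eq:Monge}, then by the same construction $\tilde V(t,x):=(1-t)\tilde g(x)+f(t)$ is a viscosity solution of \eqref{eq:HJB}, and Proposition~\ref{prop:uniqueness} forces $\tilde V=v=V$, hence $\tilde g=g$. Alternatively one can argue uniqueness directly for \eqref{eq:Monge} by the scaled-barrier/comparison trick already used in Proposition~\ref{prop:uniqueness} (scaling $\tilde g$ to $\tilde g^\lambda(x):=\tilde g(x/\lambda)+2d\log\lambda$ on the enlarged simplex $\Delta_\lambda$ and comparing with $g$ on $\bar\Delta$, then sending $\lambda\to1$); but routing it through the parabolic uniqueness already proved is cleaner and I would present that.

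Finally, the two-sided estimate $w(x)-\log(d+1)\le g(x)\le w(x)$ follows by passing to the limit in the inequalities $w-\log(d+1)\le g^n\le w$ valid on $\Omega_n$: by Proposition~\ref{prop:existence}, along the convergent subsequence $g^{n_k}\to g$ locally uniformly on $\Delta$ (indeed in $C^2_{\mathrm{loc}}$), and for any fixed $x\in\Delta$ we have $x\in\Omega_{n_k}$ for all large $k$, so the chain of inequalities is preserved in the limit; since $x$ was arbitrary this gives the bound on all of $\Delta$.

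I do not expect a genuine obstacle here: the only point requiring a moment's care is verifying that $V(t,x)=(1-t)g(x)+f(t)$ genuinely satisfies the definition of viscosity solution to \eqref{eq:HJB} including the boundary behaviour at $[0,1)\times\partial\Delta$ and at $\{1\}\times\Delta$ (which is immediate from $g\to+\infty$ on $\partial\Delta$, $(1-t)\to0$ and $f(t)\to0$ as $t\to1$, together with the lower bound $g\ge w-\log(d+1)>0$ from Lemma~\ref{lem:maincomponent} guaranteeing that the limsup/liminf conditions hold the right way), and that smoothness plus the equation indeed yields a classical (hence viscosity) solution. Everything else is bookkeeping on top of results already established.
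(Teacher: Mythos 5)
Your proposal follows essentially the same route as the paper's proof: check that $V(t,x)=(1-t)g(x)+f(t)$ is a classical (hence viscosity) solution of \eqref{eq:HJB}, invoke Proposition~\ref{prop:uniqueness} to identify $V$ with $v$ (which also gives uniqueness of $g$), and obtain the two-sided bound by passing to the limit in $w-\log(d+1)\le g^n\le w$. One small sign slip in the plan: the intermediate identity should read $-\partial_t V(t,x)=g(x)+d\log(1-t)+d$, which equals $\log\det\bigl(\tfrac12\nabla^2V(t,x)\bigr)+d$; this is cosmetic and does not affect the argument.
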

\begin{proof}
 By Proposition \ref{prop:uniqueness}, we know that $v$ defined via \eqref{eq:optimal} is the unique viscosity solution to \eqref{eq:HJB}, while on the other hand $(1-t)g(x)+f(t)$ is clearly a smooth solution to the same equation. It also implies the uniqueness of $g$.  The inequality directly follows from $w-\log(d+1) \leq g^n \leq w$ for any $n$ and the fact that $g^n \to g$.
\end{proof}

We also have some soft properties of the solution $g$ to \eqref{eq:Monge}, for instance

\begin{lemma}\label{lem:diffeomorphism}
    $\nabla g: \Delta  \to \mathbb R^d $ is a diffeomorphism.
\end{lemma}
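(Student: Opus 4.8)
The plan is to prove that $\nabla g:\Delta\to\mathbb R^d$ is a diffeomorphism by combining local invertibility (from strict convexity) with a properness / boundary-behavior argument. First I would observe that $g$ is smooth and strictly convex on $\Delta$: by the Monge-Amp\`ere equation \eqref{eq:Monge} we have $\det(\tfrac12\nabla^2 g(x))=e^{g(x)}>0$ on all of $\Delta$, and since $\nabla^2 g$ is positive semi-definite (as $g$ is convex, which we know e.g.\ from Corollary~\ref{cor:val_function_fg} and Proposition~\ref{prop:convex}), a non-vanishing determinant forces $\nabla^2 g(x)>0$ everywhere. Hence by the inverse function theorem $\nabla g$ is a local $C^\infty$-diffeomorphism, and strict convexity of $g$ on the convex domain $\Delta$ gives that $\nabla g$ is injective (if $\nabla g(x)=\nabla g(y)$ with $x\neq y$, then $t\mapsto g(x+t(y-x))$ would have vanishing derivative at both endpoints, contradicting strict convexity). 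So it remains only to show $\nabla g$ is \emph{surjective} onto $\mathbb R^d$; together with injectivity and local diffeomorphism this yields that $\nabla g$ is a global diffeomorphism onto its (open) image, which is then all of $\mathbb R^d$.

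For surjectivity I would use that the image $\nabla g(\Delta)$ is open (local diffeomorphism) and argue it is also closed in $\mathbb R^d$, so by connectedness it is everything. The key is to show that $|\nabla g(x)|\to\infty$ as $x\to\partial\Delta$, i.e.\ $\nabla g$ is proper. This follows from the blow-up of $g$ at the boundary: by Corollary~\ref{cor:val_function_fg}, $g(x)\geq w(x)-\log(d+1)$ where $w(x)=-2\sum_i\log x_i-2\log(1-\sum_i x_i)\to+\infty$ on $\partial\Delta$. Since $g$ is convex and finite on $\Delta$ but tends to $+\infty$ at the boundary, standard convex-analysis facts give that $\nabla g(x)$ must escape to infinity as $x$ approaches $\partial\Delta$: concretely, for $x$ near a boundary point $x^0$, picking the inward direction $e=\tfrac{x_0-x}{|x_0-x|}$ for a fixed interior point $x_0$, convexity gives $g(x_0)\geq g(x)+\nabla g(x)^\top(x_0-x)$, so $\nabla g(x)^\top e\leq \frac{g(x_0)-g(x)}{|x_0-x|}\to-\infty$, forcing $|\nabla g(x)|\to\infty$. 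Therefore if $y_n=\nabla g(x_n)$ converges in $\mathbb R^d$, the $x_n$ cannot approach $\partial\Delta$, so (up to a subsequence) $x_n\to x^*\in\Delta$ and $y_n\to\nabla g(x^*)\in\nabla g(\Delta)$; hence $\nabla g(\Delta)$ is closed, and being also open and nonempty it equals $\mathbb R^d$.

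Finally, to assemble: $\nabla g:\Delta\to\mathbb R^d$ is injective, a local diffeomorphism, and surjective, hence a bijective local diffeomorphism, hence a global $C^\infty$-diffeomorphism (the inverse is automatically smooth because $\nabla^2 g>0$ everywhere). I expect the main obstacle to be making the properness argument fully rigorous in a way that is uniform near all of $\partial\Delta$ — one must be slightly careful that the bound $g(x)\to\infty$ near the boundary translates into $|\nabla g(x)|\to\infty$ without assuming any a priori control that is only established later (such as Theorem~\ref{thm:quantitive}). The convex-analysis estimate above sidesteps this, using only finiteness of $g$ on the interior, convexity, and blow-up at the boundary, all of which are already available from Corollary~\ref{cor:val_function_fg}. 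An alternative, if one prefers a more quantitative route, is to use the explicit lower barrier $w-\log(d+1)$ directly together with convexity to get an explicit rate, but the soft argument suffices for a diffeomorphism statement.
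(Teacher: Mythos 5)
Your proposal is correct and follows essentially the same route as the paper: positive-definiteness of $\nabla^2 g$ (from the Monge-Amp\`{e}re equation plus convexity) gives a local diffeomorphism, hence an open image; closedness of the image is obtained from the convexity inequality $g(z)\geq g(x)+\nabla g(x)^\top(z-x)$ combined with the blow-up $g\geq w-\log(d+1)\to+\infty$ at $\partial\Delta$; and injectivity follows from strict convexity. The paper phrases the closedness step as a direct contradiction (bounded $\nabla g(x_n)$ plus $g(x_n)\to\infty$ forces $g\equiv\infty$), while you phrase it as properness of $\nabla g$, but these are the same argument.

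One small slip worth correcting: in your injectivity argument, if $\nabla g(x)=\nabla g(y)$ with $x\neq y$, the function $h(t):=g(x+t(y-x))$ does not have vanishing derivative at the endpoints; rather $h'(0)=\nabla g(x)^\top(y-x)=\nabla g(y)^\top(y-x)=h'(1)$, i.e., the endpoint derivatives are \emph{equal}, which contradicts strict monotonicity of $h'$ coming from strict convexity. (Equivalently, the paper's version: $(\nabla g(x)-\nabla g(y))^\top(x-y)>0$ for $x\neq y$.) The conclusion stands once this is stated correctly.
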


\begin{proof}
    As $g$ is smooth and $\nabla^2 g$ is invertible, $\nabla g: \Delta  \to \mathbb R^d $ is a local diffeomorphism. Hence $\nabla g(\Delta)$ is open. We now show that $\nabla g(\Delta)$ is closed, so  $\nabla g(\Delta)=\mathbb R^d$. It suffices to show that if $\nabla g(x_n)\to y$, then $\nabla g(\bar x)=y$ for some $\bar x\in \Delta$. By compactness we assume that $x_n\to x\in\overline \Delta $, so we only need to exclude the possibility that $x\in\partial \Delta$. By convexity we have
    \[g(z)\geq g(x_n)+ \nabla g(x_n)^{\top}(z-x_n), \quad  \forall \, z \in \Delta.\]
Since $\nabla g(x_n)^{\top}(z-x_n)$ is bounded, we see that if $x\in\partial \Delta$ then $g(z)=+\infty$ for all $z$, which is clearly a contradiction. To wit, $g(x_n)\to \infty$ as $n\to\infty$, as follows from Lemma~\ref{lem:maincomponent} and the boundary behaviour of the function $w$ therein.

Note that $g$ is strictly convex on $\Delta$ as $\det \nabla^2 g(x) >0$ for all $x \in \Delta$. Thus for any $x,y \in \Delta$
\begin{align*}
   g(x) > & g(y) + \nabla g(y)^{\top}(x-y), \\
   g(y) >&  g(x)+\nabla g(x)^{\top}(y-x), 
\end{align*}
    which implies that $ (\nabla g(x) -\nabla g(y))^{\top}(x-y)>0$ and hence $\nabla g(x) \not = \nabla g(y)$ if $x\neq y$. Thus $\nabla g$ is a global diffeomorphism from $\Delta$ to $\mathbb R^d $.
\end{proof}

\subsection{Boundary behavior}

In this subsection, we show the asymptotic behavior of $\nabla^2g$ near the boundary. It is inspired by the proof of Caffarelli and Li \cite{MR1953651}, and based on our observation of an invariant property of the equation \eqref{eq:Monge}: scaled smooth solutions remain solutions to essentially the same equation. Therefore we establish local $C^1,C^2$ estimates using scaled barrier functions to obtain the asymptotic behavior of $\nabla g$ and $\nabla^2 g$.

We make some simplification before proving Theorem~\ref{thm:quantitive}. Suppose $x^0=(x^0_1,\dotso,x^0_d)$ is a point in a $(d-k)$-dimensional face but not in a $(d-k-1)$-dimensional face with $1 \leq k \leq d$. W.l.o.g., we can assume the origin $\mathbf{0}$ is one vertex of this face. Otherwise, we do linear transformation and obtain the same equation. Indeed, if the origin is not one vertex of the face, we must have that $\sum_{i=1}^d x_i^0=1$. As $x^0$ is in the interior of a $(d-k)$-dimensional face, w.l.o.g., we assume that $\sum_{i=1}^d x^0_i=1$, $x^0_1=\dotso=x^0_{k-1}=0$, $x^0_{k},\dotso, x^0_d>0$. Let us define a linear transformation $L$ from $\Delta$ to $\Delta$ via 
\begin{align*}
y_i=&L_i(x)=x_i, \hspace{0.2cm} i=1,\dotso, k-1, k+1,\dotso, d, \\
y_k=&L_k(x)=1-x_1-x_2-\dotso-x_{d},  
\end{align*}
where $L_i(x)$ denotes the $i$-th component of $L(x)$, $i=1,\dotso, d$. It can be easily verified that $L$ is a one-to-one linear transformation from $\Delta$ to $\Delta$ such that $L=L^{-1}$ and $L^{-1}(x^0)=(0,\dotso,0,x^{0}_{k+1},\dotso, x^0_d)$ is in the interior of a $(d-k)$-dimensional with $\mathbf{0}$ as one of its vertices. Taking $\bar g(x):= g(L(x))$, since the determinant of the Jacobian of $L$ is $-1$, one can in fact show that $\bar g$ is a smooth solution to \eqref{eq:Monge} as well.

Therefore w.l.o.g., we assume  $x^0_1=\dotso=x^0_k=0$, $x^0_{k+1},\dotso,x^0_d > 0$, and $\sum_{j=k+1}^d x_j^0<1$. Theorem~\ref{thm:quantitive} \emph{(i)} will be a direct corollary of the following proposition. 

\begin{proposition}\label{prop:secondorder}
    Fix $x^0 \in \partial \Delta$ with $x^0_1=\dotso=x^0_k=0$,  $x^0_{k+1},\dotso,x^0_d > 0$, and $\sum_{j=k+1}^d x_j^0<1$. Taking $r= (1-\sum_{i=k+1}^d x^0_i)/{(2d)}$,  there exist two positive constants $c_1(x^0)$, $c_2(x^0)$ such that for any $x_1,\dotso, x_k \in (0,r)$,
    \begin{align*}
    c_1(x^0) Diag(1/x^2_1,\dotso,1/x^2_k, 1,\dotso, 1) 
  &\leq \nabla^2 g(x_1,\dotso, x_k, x^0_{k+1},\dotso,x^0_d) \\
  &\leq  c_2(x^0)Diag(1/x^2_1,\dotso,1/x^2_k, 1,\dotso ,1),
  \end{align*}
    where $Diag(1/x^2_1,\dotso,1/x^2_k, 1,\dotso, 1)$ denotes a diagonal matrix with the $(i,i)$-th entry being $1/x^2_i$ for $1\leq i \leq k$ and $1$ for $k+1 \leq i \leq d$. 
\end{proposition}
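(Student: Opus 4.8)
The plan is to exploit the invariance of \eqref{eq:Monge} under the scaling $x\mapsto x/\lambda$ (as already used in the uniqueness proof and announced at the start of this subsection): if $g$ solves \eqref{eq:Monge} on $\Delta$, then $g_\lambda(x):=g(x/\lambda)-2d\log\lambda$ solves the \emph{same} Monge-Amp\`ere equation on $\Delta_\lambda=\lambda\Delta$, because $\det(\tfrac12\nabla^2 g_\lambda(x))=\lambda^{-2d}\det(\tfrac12\nabla^2 g(x/\lambda))=e^{g(x/\lambda)-2d\log\lambda}=e^{g_\lambda(x)}$. The idea of Caffarelli-Li \cite{MR1953651} is then to perform a \emph{one-sided} scaling adapted to the vanishing coordinates: near $x^0$ one blows up the first $k$ directions (the ones hitting the face $x_1=\dots=x_k=0$) while leaving the remaining ones essentially fixed, and one shows the rescaled solutions converge, on compact subsets of a limiting ``wedge'' domain, to the solution of \eqref{eq:Monge} on the product of a lower-dimensional simplex (in $x_1,\dots,x_k$) with the fixed slice, whose Hessian is explicitly comparable to $Diag(1/x_1^2,\dots,1/x_k^2,1,\dots,1)$ by the barrier $w$.

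Concretely I would proceed as follows. \textbf{Step 1 (barriers on a box).} Fix $x^0$ as in the statement and set $r=(1-\sum_{i>k}x^0_i)/(2d)$. Restrict attention to the ``tube'' $T:=\{x:\ 0<x_i<r,\ i\le k,\ |x_j-x^0_j|<r,\ j>k\}\subset\Delta$, which stays a definite distance from all faces of $\Delta$ other than $\{x_1=\dots=x_k=0\}$. On $T$ one builds an upper barrier of the form $\bar w(x)=-2\sum_{i\le k}\log x_i + Q(x)$ with $Q$ a suitable smooth convex quadratic, chosen so that $\bar w$ is a supersolution of \eqref{eq:Monge} on $T$, $\bar w\ge g$ on $\partial T$, and $\nabla^2\bar w\le c_2\,Diag(1/x_1^2,\dots,1/x_k^2,1,\dots,1)$; the computation mirrors Lemma~\ref{lem:maincomponent}. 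Similarly a lower barrier $\underline w$ of the same shape (with a different constant and possibly a $-\log$-correction) gives $\underline w\le g$ on $T$ and the matching lower Hessian bound. The comparison principle then pins $g$ between these on $T$, which already yields the two-sided \emph{pointwise} bound $c_1\le g - (-2\sum_{i\le k}\log x_i)\le c_2$ but not yet the Hessian bound.

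\textbf{Step 2 (rescaling and interior estimates).} For a point $x=(x_1,\dots,x_k,x^0_{k+1},\dots,x^0_d)$ with all $x_i$ small, let $\lambda=\max_{i\le k}x_i$ (say), and define the rescaled function $\tilde g(z):=g(\lambda z_1,\dots,\lambda z_k, x^0_{k+1}+\lambda z_{k+1},\dots)+(\text{affine correction in }z)$, which by the invariance above solves a Monge-Amp\`ere equation of the same type on the rescaled domain, and on which $z$ now sits at a point of order $1$ away from the ``blown-up'' face. Applying the \emph{interior} $C^1,C^2$ estimates already proved in Lemma~\ref{lem:e1} and Lemma~\ref{lem:e2} (which only needed upper and lower bounds for the solution, supplied by Step~1) on a fixed ball around $z$, and then undoing the scaling, converts those $z$-Hessian bounds into the claimed $x$-Hessian bounds $c_1\,Diag(1/x_1^2,\dots,1/x_k^2,1,\dots,1)\le\nabla^2 g(x)\le c_2\,Diag(\cdots)$, because each unit of $z$-second-derivative in the $i$-th blown-up direction corresponds to a factor $1/\lambda^2\sim 1/x_i^2$, and the bounded directions are unaffected. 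The constants $c_1(x^0),c_2(x^0)$ depend only on the barrier constants from Step~1 and on the distance $r$ from the other faces, hence only on $x^0$.

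\textbf{Main obstacle.} The delicate point is the \emph{anisotropic} nature of the scaling: when the $x_i$ are not comparable to each other, a single dilation factor $\lambda$ does not simultaneously normalize all of them, so one either iterates the argument coordinate-by-coordinate (blowing up $x_1$, then $x_2$, etc., checking the invariance survives each step) or, more robustly, introduces a diagonal linear change of variables $x_i\mapsto x_i/\lambda_i$ and verifies that \eqref{eq:Monge} transforms into $g=\log\det(\tfrac12\nabla^2 g)+\text{const}$ with the constant absorbable into an additive shift — this works precisely because the right-hand side is $\log\det$, so a product of scalings only shifts $g$ by $\sum_i\log\lambda_i$. Getting the barrier in Step~1 to be simultaneously a super/sub-solution \emph{and} have the exact diagonal Hessian control, uniformly as the $x_i\to0$ at possibly different rates, is where the real work lies; the convergence/compactness bookkeeping in Step~2 is then routine given Lemmas~\ref{lem:e1}--\ref{lem:e2}.
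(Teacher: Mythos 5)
Your proposal follows essentially the same strategy as the paper's proof: an anisotropic affine change of variables that rescales only the $k$ vanishing coordinates and leaves the others fixed, exploiting the invariance of \eqref{eq:Monge} under such maps (the $\log\det$ structure absorbs a product of scalings into an additive shift of the solution), combined with Pogorelov-type interior $C^1,C^2$ estimates. The paper's transformation $T^{x_{1:k}}(y)=(x_1(1+y_1),\dots,x_k(1+y_k),y_{k+1},\dots,y_d)$ is exactly the ``diagonal change of variables'' you identify at the end as the robust fix; your first attempt with a single dilation factor $\lambda=\max_{i\le k}x_i$ fails for the reason you yourself note, so the paper goes directly to the anisotropic map. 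Your Step~1 (local barriers on a tube) is a substitute for the paper's use of the global barrier $w$ together with the two-sided bound $w-\log(d+1)\le g\le w$ from Corollary~\ref{cor:val_function_fg}; it is workable but largely redundant given what is already established.

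The genuine gap is in your Step~2. After the anisotropic rescaling, the transformed domain $\Omega^{x_{1:k}}=(T^{x_{1:k}})^{-1}(\Delta)$ is unbounded in the blown-up directions (the coordinate $y_i$ can be as large as roughly $1/x_i-1$ for $i\le k$), and crucially the rescaled solution $\tilde\eta^{x_{1:k}}$ is \emph{not bounded below} on $\Omega^{x_{1:k}}$: as $y_i\to\infty$ the term $-2\log(1+y_i)$ in the rescaled barrier drives $\tilde w$ (hence $\tilde\eta$) to $-\infty$. Thus the sublevel sets of $\tilde\eta$ are not compact, and the maximizations underlying Lemmas~\ref{lem:e1}--\ref{lem:e2} do not transfer verbatim — the Pogorelov weight $(C-\tilde\eta)^{2d}\tilde\eta_{\xi\xi}e^{\iota|y|^2/2}$ may have no interior maximizer. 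The paper's device is to add the linear functional $h(y)=2\sum_{i\le k}y_i$: since $r-\log(1+r)\ge 0$ with compact sublevel sets, $\tilde w+h$ (hence $\tilde\eta+h$) is bounded below and has compact sublevel sets $\tilde\Omega_C$ (Lemma~\ref{lem:ybound}), which restores compactness. The first- and second-order estimates must then be re-derived on these shifted sublevel sets (Lemmas~\ref{lem:firstorder} and~\ref{lem:secondorder}), with an additional small parameter $\iota$ in the Gaussian weight tuned to absorb the extra terms contributed by $h$. You describe this as ``routine bookkeeping,'' but this shift by $h$ is the concrete structural idea that makes the Pogorelov argument survive the unbounded rescaled geometry; it is not a direct application of Lemmas~\ref{lem:e1}--\ref{lem:e2} as stated.
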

To prove this proposition, let us define some auxiliary functions. Denote $x^0_{k+1:d}:=(x^0_{k+1},\dotso, x^0_d)$. For $x_1,\dotso,x_k$ with $x_1,\dotso, x_k>0$ and $\sum_{i=1}^k x_i <1-\sum_{j=k+1}^d x^0_j$, similarly denote $x_{1:k}=(x_1,\dotso,x_k)$. We will analyze the Hessian of $g$ at $(x_{1:k},x_{k+1:d}^0)$ close to $x^0$. To this end, we define the linear transformation $T^{x_{1:k}}:\R^d \to \R^d$ 
\begin{align*}
    T^{x_{1:k}}: (y_1,\dotso, y_d) \mapsto (x_1(1+y_1), \dotso, x_k(1+y_k), y_{k+1}, \dotso, y_d), 
\end{align*}
in the domain $\Omega^{x_{1:k}}= (T^{x_{1:k}})^{-1}(\Delta)$, and the auxiliary function
\begin{align*}
    \eta^{x_{1:k}}: \Omega^{x_{1:k}} \to \mathbb R; \, \, y \mapsto g(T^{x_{1:k}}(y)).
\end{align*}
Then it can be easily checked that $\eta^{x_{1:k}}=\infty$ on $\pa \Omega^{x_{1:k}}$, and also
\begin{align}\label{eq:htransfer}
    \nabla^2_{ij} \eta^{x_{1:k}} (y)= 
    \begin{cases}
        x_i x_j \nabla^2_{ij} g(T^{x_{1:k}}(y)), \quad 1 \leq i,j \leq k, \\
        x_i \nabla^2_{ij} g(T^{x_{1:k}}(y)), \quad \ \ \  1 \leq i \leq k < j, \\
        \nabla^2_{ij}g(T^{x_{1:k}}(y)), \quad \quad \ \ \,  k < i,j.
    \end{cases}
\end{align}
Therefore we have the equality 
\begin{align*}
   \textstyle  \det \left(\frac{1}{2} \nabla^2 \eta^{x_{1:k}}(y) \right)&\textstyle =x_1^2\dotso x_k^2 \det \left(\frac{1}{2} \nabla^2 g(T^{x_{1:k}}(y)) \right)\\
    &\textstyle =x_1^2\dotso x_k^2 \, \exp({\eta^{x_{1:k}}(y)})= \exp\left({\eta^{x_{1:k}}(y)+2 \sum_{i=1}^k \log(x_i)}\right), 
\end{align*}
and the function 
\begin{equation}\label{eq:tileta}
\textstyle \tilde \eta^{x_{1:k}}(y):=\eta^{x_{1:k}}(y) + 2 \sum_{i=1}^k \log(x_i), \quad y \in \Omega^{x_{1:k}},
\end{equation}
satisfies the Monge-Amp\`{e}re 
\begin{align*} 
\begin{cases}
\tilde \eta^{x_{1:k}}=\log \det\left(\frac{1}{2}\nabla^2 \tilde \eta^{x_{1:k}}(x)\right), \quad y \in \Omega^{x_{1:k}}, \\
\tilde \eta^{x_{1:k}} =+\infty, \quad \quad \quad \ \ \quad \quad  \quad \quad y \in \partial \Omega^{x_{1:k}}. \notag
\end{cases}
\end{align*}
We will provide estimates of $\nabla^2 \tilde\eta^{x_{1:k}}(0,x^0_{k+1:d})$ uniformly for $x_{1:k}$. Then since $T^{x_{1:k}}(0,x^0_{k+1:d})= (x_{1:k}, x^0_{k+1:d})$, Proposition~\ref{prop:secondorder} will follow from the equality \eqref{eq:htransfer}.

Recalling the definition of $w$ in \eqref{eq:w}, we define 
\begin{align*}
   \textstyle  \tilde w^{x_{1:k}}(y):=w(T^{x_{1:k}}(y)) +  2 \sum_{i=1}^k \log(x_i), \, \quad y \in \Omega^{x_{1:k}}. 
\end{align*}
Thanks to Corollary~\ref{cor:val_function_fg}, $|\tilde w^{x_{1:k}}(y) - \tilde \eta^{x_{1:k}} (y)| \leq \log(d+1)$ for any $ y \in \Omega^{x_{1:k}}$. To restrict our argument on compact subsets of $y$, we define $h: \mathbb R^d \to \R; y \mapsto \sum_{i=1}^k 2y_i$ and consider, for any given $C>0$, the sublevel sets \[\tilde \Omega_C^{x_{1:k}}:=\left\{y \in \Omega^{x_{1:k}}: \, \tilde \eta^{x_{1:k}}(y)+h(y) < C\right\}.\]   The next lemma provides bounds for $y \in \tilde \Omega_C^{x_{1:k}}$.

Keep in mind that the definition of all the $T,\eta,\tilde\eta,\tilde w, \Omega, \tilde \Omega_C$ are dependent on $x_{1:k}$.  In Lemma~\ref{lem:ybound}, Lemma~\ref{lem:firstorder}, and Lemma~\ref{lem:secondorder}, we skip the superscript $x_{1:k}$.

\begin{lemma}\label{lem:ybound}

Denoting $\tilde C=C+\log(d+1)+10$, for any $y \in \tilde \Omega_C$ we have that $\tilde w(y) +h(y) \geq 0$, and 
\begin{align}\label{eq:ybound}
&\textstyle e^{-\tilde C}-1 < y_i <e^{\tilde C/2},\, \quad \quad \quad i=1,\dotso, k, \\
&\textstyle e^{-\tilde C/2 } < y_j < 1, \, \quad  \quad \quad \quad \quad  j=k+1, \dotso, d, \notag \\
&\textstyle e^{-\tilde C/2 } < 1-\sum_{l=1}^dT_l(y)<1. \notag
\end{align}
\end{lemma}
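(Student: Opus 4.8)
The plan is to unwind the definitions of $\tilde w$, $\tilde\eta$, $h$, and the sublevel set $\tilde\Omega_C$, and then to exploit the uniform two-sided bound $|\tilde w(y)-\tilde\eta(y)|\le\log(d+1)$ coming from Corollary~\ref{cor:val_function_fg}. First I would write out $\tilde w(y)+h(y)$ explicitly. Since $w(x)=-2\sum_{i=1}^d\log(x_i)-2\log(1-\sum_i x_i)$ and $T_i(y)=x_i(1+y_i)$ for $i\le k$, $T_j(y)=y_j$ for $j>k$, we get
\begin{align*}
\tilde w(y)+h(y)&=w(T(y))+2\sum_{i=1}^k\log(x_i)+\sum_{i=1}^k 2y_i\\
&=-2\sum_{i=1}^k\log(1+y_i)+2\sum_{i=1}^k y_i-2\sum_{j=k+1}^d\log(y_j)-2\log\!\Big(1-\sum_{l=1}^d T_l(y)\Big).
\end{align*}
The first sum is nonnegative termwise because $z-\log(1+z)\ge 0$ for $z>-1$; the remaining two terms are each manifestly nonnegative since $y_j\in(0,1)$ and $1-\sum_l T_l(y)\in(0,1)$ on $\Omega$ (the domain is contained in the simplex). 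Hence $\tilde w(y)+h(y)\ge 0$ on all of $\Omega$, which is the first assertion.

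Next, for $y\in\tilde\Omega_C$ we have $\tilde\eta(y)+h(y)<C$, so by the comparison bound $\tilde w(y)+h(y)<C+\log(d+1)<\tilde C$. I would then read off each coordinate bound by isolating one nonnegative term at a time in the displayed expansion. Dropping all but the $j$-th term ($j>k$) gives $-2\log(y_j)<\tilde C$, i.e.\ $y_j>e^{-\tilde C/2}$; dropping all but the last term gives $1-\sum_l T_l(y)>e^{-\tilde C/2}$; and for $i\le k$, dropping everything but $2(y_i-\log(1+y_i))$ gives $y_i-\log(1+y_i)<\tilde C/2$, from which $y_i<e^{\tilde C/2}$ follows (using the elementary inequality $z-\log(1+z)\ge z - \log z \ge z/2$ for $z$ large, or more simply $\log(1+z)\le z/2 + \tilde C/2 \Rightarrow z \le e^{\tilde C/2}$ after crude estimation — I would state the clean sufficient inequality and cite it). The lower bound $y_i>e^{-\tilde C}-1$ for $i\le k$ is just the domain constraint $T_i(y)=x_i(1+y_i)>0$ combined with $x_i(1+y_i) = T_i(y) \ge e^{-\tilde C/2}\cdot$(something) — actually more directly, since $y\in\Omega$ forces $1+y_i>0$, i.e.\ $y_i>-1$, and the bound $e^{-\tilde C}-1$ is a (weaker, convenient) explicit lower bound that I would obtain from the fact that $-2\log(1+y_i)$ also contributes nonnegatively only when $y_i>0$; when $-1<y_i\le 0$ one uses $-2\log(1+y_i)<\tilde C$ directly to get $y_i>e^{-\tilde C/2}-1>e^{-\tilde C}-1$. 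The upper bounds $y_j<1$ and $1-\sum_l T_l(y)<1$ are immediate from $y\in\Omega\subset$ simplex.

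The only mildly delicate point — the ``main obstacle,'' such as it is — is making the elementary scalar inequalities in the $i\le k$ case airtight with the specific constants $e^{-\tilde C}-1$ and $e^{\tilde C/2}$, and being careful that the $+10$ slack absorbed into $\tilde C$ covers the additive losses from these crude bounds. I would handle this by first recording a one-line scalar lemma: for $z>-1$, $\ 0\le z-\log(1+z)$, and if $z-\log(1+z)<a$ with $a\ge 1$ then $-1<z<e^{a}$ (and if moreover $z\le 0$ then $z>e^{-a}-1$); these are checked by monotonicity. Everything else is bookkeeping with the nonnegativity of the four groups of terms, so no genuine difficulty remains once the expansion of $\tilde w+h$ is written down.
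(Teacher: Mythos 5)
Your overall strategy matches the paper's: expand $\tilde w+h$ into the three groups of nonnegative terms, use $|\tilde w-\tilde\eta|\le\log(d+1)$ to pass from $\tilde\eta+h<C$ to $\tilde w+h<\tilde C$, then isolate each nonnegative term and invert the scalar function $z\mapsto z-\log(1+z)$ on $i\le k$ and the logarithm on the remaining terms. That is exactly the paper's proof.

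However, your proposed scalar lemma is wrong, and this propagates into the lower bound for $y_i$, $i\le k$. You claim that $z-\log(1+z)<a$ with $z\le 0$ implies $z>e^{-a}-1$; this is false. Take $a=1$, $z=-0.8$: then $z-\log(1+z)=-0.8-\log(0.2)\approx 0.81<1$, yet $e^{-1}-1\approx -0.63>-0.8$. The correct inversion (and what the paper records as its inequality \eqref{eq:logbound}) is
\[
M>5,\quad r-\log(1+r)<M \ \Longrightarrow\ e^{-2M}-1<r<e^{M},
\]
i.e.\ one loses a factor of $2$ in the exponent on the left, and one also needs a largeness hypothesis on $M$ (the paper uses $M>5$, which is why the $+10$ slack in $\tilde C$ matters; your ``$a\ge 1$'' is not enough for the claimed upper bound $z<e^a$ either, though it happens to suffice once $a>5$). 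Applying the correct lemma with $M=\tilde C/2>5$ gives precisely $y_i>e^{-\tilde C}-1$. Your alternative suggestion, ``when $-1<y_i\le 0$ one uses $-2\log(1+y_i)<\tilde C$ directly,'' also does not follow: from $2\bigl(y_i-\log(1+y_i)\bigr)<\tilde C$ with $y_i<0$ one only gets $-2\log(1+y_i)<\tilde C-2y_i$, and $-2y_i>0$ pushes the bound the wrong way; you must then use $-2y_i<2$ (from $y_i>-1$) and absorb the loss, or just use the scalar inequality above. (There is also a small sign slip: $-2\log(1+y_i)\ge 0$ holds when $y_i\le 0$, not $y_i>0$.) These are fixable bookkeeping errors, but as stated the argument does not establish the claimed constants.
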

\begin{proof}
Take $\tilde D=\{y \in \Omega: \, \tilde w(y)+h(y) < \tilde C\}$. It clear that $\tilde \Omega_C \subset \tilde D$, as $\tilde w -\log(d+1) \leq \tilde \eta \leq \tilde w$.  By the convexity of $r \mapsto r- \log(1+r)$, it is clear that $r -\log(1+r) \geq 0$ for all $r \geq -1$, and $\{r: r -\log(1+r) <M \}$ is an open interval in $\mathbb R$ for any $M >0$.  For any $M>5 $, it can be easily checked that $r-\log(1+r) >M$ for both $r=e^{-2M}-1$ and $r=e^M$, and hence
\begin{align}\label{eq:logbound}
\forall \, M>5,       \quad  r -\log(1+r) < M \implies e^{-2M}-1< r <e^M.
\end{align}
Denote $T_i(y)$ the $i$-th component of the $T(y)$ for $i=1,\dotso, d$. Then for any $y \in \Omega$, it is straightforward that for $i=1,\dotso,k, \   j=k+1,\dotso,d$
\begin{align*}
    \textstyle y_i -\log(1+y_i) \geq 0, \, -\log(y_j) \geq 0 , \,-\log\left(1-\sum_{i=1}^dT_i(y)\right) \geq 0.
\end{align*}
Then according to the definition of $\tilde w$,  for any $y \in \Omega$
\begin{align*}
\textstyle \tilde w(y)+h(y)=  \sum_{i=1}^k 2 (y_i- \log(1+y_i)) -\sum_{i=k+1}^d 2 \log(y_{i})- 2\log\left(1-\sum_{i=1}^dT_i(y)\right)  \geq 0.
\end{align*}
For any $y \in \tilde \Omega_C$, as $\tilde w(y) +h(y) \leq \tilde C$, we get that, for $i=1,\dotso, k,\, j=k+1,\dotso,d$
\begin{align*}
   \textstyle  y_i-\log(1+y_i) \leq \tilde C/2, \, -\log(y_j) \leq \tilde C/2, \, -\log\left(1-\sum_{l=1}^dT_l(y)\right) \leq \tilde C/2,
\end{align*}
and hence from \eqref{eq:logbound} we conclude \eqref{eq:ybound}. 
\end{proof}

In the next lemma, we provide estimates of the first-order and second-order derivatives of $\tilde \eta$. The proofs are of the same flavor as  Lemma~\ref{lem:e1} and Lemma~\ref{lem:e2}, with technical modifications. We provide the complete argument for the convenience of the readers. 

\begin{lemma}\label{lem:firstorder}
Denoting $\tilde C=C+\log(d+1)+10$, it holds that 
\begin{align*}
|\nabla \tilde \eta(y)| \leq 8d^2 \exp \left(5d e^{\tilde C} \right), \quad \forall\, y \in \tilde \Omega_C.
\end{align*}
\end{lemma}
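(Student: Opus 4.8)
\textbf{Proof proposal for Lemma~\ref{lem:firstorder}.}

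The plan is to mimic the $C^1$-estimate argument of Lemma~\ref{lem:e1}, but now carried out for $\tilde\eta$ on the sublevel set $\tilde\Omega_C$ in place of $\Omega_N$, and with $\tilde\eta+h$ playing the role of $w$ as the barrier that vanishes on (a portion of) the boundary. First, fix a unit direction $\xi\in\mathcal S^{d-1}$ and consider the quantity $(\tilde C-\tilde\eta(y)-h(y))\,\tilde\eta_\xi(y)\,e^{\tilde\eta(y)}$ over the closure of $\{y\in\Omega:\tilde\eta(y)+h(y)<\tilde C\}$, where $\tilde C=C+\log(d+1)+10$; note that by Lemma~\ref{lem:ybound} this set contains $\tilde\Omega_C$ and that $\tilde C-\tilde\eta-h$ vanishes on its boundary while $\tilde\eta\to+\infty$ on $\partial\Omega$, so that the maximum of this product (if $\tilde\eta_\xi>0$ somewhere) is attained at an interior point $y_0$. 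Taking $\rho=\log(\tilde C-\tilde\eta-h)+\log\tilde\eta_\xi+\tilde\eta$ and writing the first-order condition in direction $\xi$ at $y_0$ exactly as in Lemma~\ref{lem:e1} gives
\[
0=\frac{-\tilde\eta_\xi(y_0)-h_\xi(y_0)}{\tilde C-\tilde\eta(y_0)-h(y_0)}+\frac{\tilde\eta_{\xi\xi}(y_0)}{\tilde\eta_\xi(y_0)}+\tilde\eta_\xi(y_0),
\]
and since $\tilde\eta_{\xi\xi}(y_0)>0$ by convexity, this forces $\tilde\eta_\xi(y_0)\le\dfrac{\tilde\eta_\xi(y_0)+h_\xi(y_0)}{\tilde C-\tilde\eta(y_0)-h(y_0)}$, hence (moving the $\tilde\eta_\xi(y_0)$ term across and using $\tilde C-\tilde\eta(y_0)-h(y_0)\ge 0$ together with $|h_\xi|\le 2\sqrt k$) a quantitative bound $\tilde\eta_\xi(y_0)\le K$ at the maximizer.

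Next, transfer this interior bound to a bound at the point of interest. Exactly as in Lemma~\ref{lem:e1}, for any $y\in\tilde\Omega_C$ one has $\tilde C-\tilde\eta(y)-h(y)\ge 10$, and $\tilde\eta(y)\ge\tilde w(y)-\log(d+1)\ge\inf w-\log(d+1)>0$ by Corollary~\ref{cor:val_function_fg} and Lemma~\ref{lem:maincomponent}, while $\tilde\eta(y_0)\le\tilde C$; plugging these into $(\tilde C-\tilde\eta(y)-h(y))\tilde\eta_\xi(y)e^{\tilde\eta(y)}\le(\tilde C-\tilde\eta(y_0)-h(y_0))\tilde\eta_\xi(y_0)e^{\tilde\eta(y_0)}$ yields
\[
\sup_{y\in\tilde\Omega_C}\tilde\eta_\xi(y)\ \le\ K\,\sup\{\,|\partial_i w(T(z))|+|h_i|:\ z\in\tilde\Omega_C,\ i\le d\,\}.
\]
The final step is to bound $|\partial_i w(T(y))|$ pointwise on $\tilde\Omega_C$ using the coordinate bounds from Lemma~\ref{lem:ybound}: there $x_i(1+y_i)=T_i(y)\ge x_ie^{-\tilde C}$ ... more precisely $1+y_i>e^{-\tilde C}$ for $i\le k$, $y_j>e^{-\tilde C/2}$ for $j>k$, and $1-\sum_l T_l(y)>e^{-\tilde C/2}$, so that each entry $\partial_i w(T(y))=\pm 2/T_i(y)\pm 2/(1-\sum_l T_l(y))$ contributes, after the chain rule through $T$ (which multiplies the $i$-th component by $x_i\le r<1$ for $i\le k$), a factor controlled by $e^{\tilde C}$. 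Carefully chasing the constants — the power of $d$ from summing $d$ coordinates, and the exponential $\exp(5de^{\tilde C})$ absorbing all the $e^{\tilde C}$ and $e^{\tilde C/2}$ factors — gives the stated bound $|\nabla\tilde\eta(y)|\le 8d^2\exp(5de^{\tilde C})$.

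The main obstacle is bookkeeping rather than conceptual: one must verify that the maximizer of the barrier product does land in the interior (this uses that $h$ is linear, hence $\tilde\eta+h$ still blows up on $\partial\Omega$ and the level set is relatively compact in $\Omega$ by Lemma~\ref{lem:ybound}), and then track the explicit dependence of all constants on $\tilde C$ and $d$ through the transformation $T^{x_{1:k}}$ so that the final bound is genuinely \emph{uniform in $x_{1:k}$} — which is the whole point, since Proposition~\ref{prop:secondorder} needs estimates that do not degenerate as $x_{1:k}\to 0$. The uniformity works precisely because, after passing to $\tilde\eta$, the domain $\Omega^{x_{1:k}}$ and the barrier $\tilde w^{x_{1:k}}+h$ are controlled by $\tilde C$ alone (via Lemma~\ref{lem:ybound}), with no residual $x_{1:k}$-dependence in the bounds.
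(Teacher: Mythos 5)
Your overall scheme — a barrier-weighted maximization in the spirit of Lemma~\ref{lem:e1}, followed by the first-order condition and Lemma~\ref{lem:ybound} — is the right one and matches the paper's. However, there are two genuine errors in the execution, and the first is fatal.

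\textbf{The weight must be the explicit supersolution $\tilde w$, not $\tilde\eta$.} You maximize $(\tilde C-\tilde\eta-h)\,\tilde\eta_\xi\,e^{\tilde\eta}$ over $\{\tilde\eta+h<\tilde C\}$. The resulting first-order condition at $y_0$ yields
\[
\tilde\eta_\xi(y_0)\bigl(\tilde C-\tilde\eta(y_0)-h(y_0)-1\bigr)\;<\;h_\xi(y_0),
\]
in which $\tilde\eta_\xi(y_0)$ appears on \emph{both} sides. You assert this can be resolved using only $\tilde C-\tilde\eta(y_0)-h(y_0)\ge 0$; it cannot. If $\tilde C-\tilde\eta(y_0)-h(y_0)\le 1$ (which happens when the maximizer $y_0$ sits near the sublevel boundary, exactly where the weight factor is small), the inequality flips or collapses and gives no upper bound; if $\tilde C-\tilde\eta(y_0)-h(y_0)>1$ but close to $1$, the resulting bound $\tilde\eta_\xi(y_0)\le h_\xi(y_0)/(\tilde C-\tilde\eta(y_0)-h(y_0)-1)$ blows up. So the "quantitative bound at the maximizer" is not obtained. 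The paper sidesteps this circularity by taking the weight $\tilde C-\tilde w-h$ (with the explicit barrier $\tilde w$, not the unknown $\tilde\eta$): then the first-order condition reads $\tilde\eta_\xi(y_0)<(\tilde w_\xi(y_0)+h_\xi(y_0))/(\tilde C-\tilde w(y_0)-h(y_0))$, and multiplying back by the weight cleanly kills the denominator, giving $\rho(y_0)<(\tilde w_\xi(y_0)+h_\xi(y_0))e^{\tilde\eta(y_0)}$. This is the crucial design choice; your variant does not deliver Step~I.

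\textbf{The lower bound $\tilde\eta(y)>0$ is false.} You write $\tilde\eta(y)\ge\tilde w(y)-\log(d+1)\ge\inf w-\log(d+1)>0$, but $\tilde w^{x_{1:k}}(y)=w(T^{x_{1:k}}(y))+2\sum_{i\le k}\log x_i$ carries the (large negative) term $2\sum_{i\le k}\log x_i$, so $\tilde w$ and $\tilde\eta$ are \emph{not} bounded below by a positive constant. The correct lower bound, used in the paper, is $\tilde\eta(y)\ge \tilde w(y)-\log(d+1)\ge -h(y)-\log(d+1)$ via Lemma~\ref{lem:ybound}. This leaves an $e^{h(y)}$ factor that has to be carried through and bounded at the end using the coordinate estimates $y_i<e^{\tilde C/2}$; dropping it (as you do) loses exactly the exponential-in-$\tilde C$ dependence that the final constant $8d^2\exp(5de^{\tilde C})$ records. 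As a minor additional point, Lemma~\ref{lem:ybound}'s bounds are proved on the sublevel set of $\tilde w+h$, which \emph{contains} $\tilde\Omega_C$; your set $\{\tilde\eta+h<\tilde C\}$ contains the paper's $\tilde D$ rather than being contained in it, so you would need the slightly larger constant $\tilde C+\log(d+1)$ in Lemma~\ref{lem:ybound} to bound it — harmless in itself, but worth flagging given the other issues.
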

\begin{proof}
The proof will be divided into two steps. Take $\tilde D=\{y \in \Omega: \, \tilde w(y)+h(y) < \tilde C\}$. It clear that $\tilde \Omega_C \subset \tilde D$, as $\tilde w -\log(d+1) \leq \tilde \eta \leq \tilde w$. 
\medskip

\noindent \emph{Step I:} Let us prove the inequality
\begin{align}\label{eq:firstorder}
   \textstyle  |\nabla \tilde \eta (y)| \leq e^{\log(d+1)+h(y)} \sup_{z \in \tilde D} (2+|\nabla \tilde w (z)|) e^{\tilde w(z)}, \quad \forall \, y \in \tilde \Omega_C. 
\end{align}
It is enough to show that for any $\xi \in \R^d$ with $|\xi|=1$, 
\begin{align*}
   \textstyle   \nabla_{\xi} \tilde \eta  \leq e^{\log(d+1)+h(y)} \sup_{z \in \tilde D} (2+|\nabla \tilde w (z)|) e^{\tilde w(z)}, \quad \forall \, y \in \tilde \Omega_C,   
\end{align*}
as $\nabla_{-\xi} \tilde\eta =- \nabla_{\xi} \tilde \eta$. We consider the maximization problem $\max_{y \in \tilde D} \rho(y)$ with 
\begin{align*}
   \rho(y):=(\tilde C-\tilde w(y)-h(y)) \nabla_{\xi} \tilde \eta(y) e^{\tilde \eta (y)}, 
\end{align*}
Since $\rho(y)=0$ on the boundary $\pa \tilde D$, the maximizer must be obtained at the interior of $\tilde D$, denoted by $y_0$, and clearly $\rho(y_0) >0$, $\nabla_{\xi} \tilde \eta (y_0) >0$. The first order condition implies that $0=\nabla_{\xi} \log \rho(y_0)$, i.e., 
\begin{align*}
   \textstyle  0= -\frac{\nabla_{\xi} \tilde w(y_0)+\nabla_{\xi} h(y_0)}{\tilde C-\tilde w(y_0)-h(y_0)}+\frac{\nabla^2_{\xi \xi} \tilde \eta (y_0)}{\nabla_{\xi} \tilde \eta(y_0)}+\nabla_{\xi} \tilde \eta(y_0).
\end{align*}
Thanks to the convexity of $y \mapsto \tilde\eta(y) $, $\nabla^2_{\xi \xi } \tilde \eta(y_0) >0$, and hence 
\begin{align*}
   \textstyle  \nabla_{\xi}\tilde \eta(y_0) < \frac{\nabla_{\xi} \tilde w(y_0)+\nabla_{\xi} h(y_0)}{\tilde C-\tilde w(y_0)-h(y_0)}.
\end{align*}

Therefore we get that 
\begin{align*}
    \rho(y) \leq \rho(y_0) \leq (2+|\nabla \tilde w(y_0)|) e^{\tilde \eta(y_0)}, \quad \forall \, y \in \tilde D. 
\end{align*}
Restricting $y$ in  $\tilde \Omega_C \subset \tilde D$, 
\begin{align*}
 \tilde C-\tilde w(y)-h(y) \geq  C+\log(d+1)+10-\tilde \eta(y)-\log(d+1) -h(y) \geq 10, 
\end{align*}
and also 
\begin{align*}
    \tilde \eta(y) \geq \tilde w(y)-\log(d+1) \geq -h(y) -\log(d+1),
\end{align*}
where in the last inequality we use $\tilde w(y) \geq -h(y)$ from Lemma~\ref{lem:ybound}. 
By the definition of $\rho$, we conclude that
\begin{align*}
    10 \nabla_{\xi} \tilde\eta (y) e^{-h(y)-\log(d+1)} \leq (2+|\nabla \tilde w(y_0)|) e^{\tilde \eta(y_0)}, \quad \forall \, y \in \tilde \Omega_C,
\end{align*}
which implies \eqref{eq:firstorder}.

\medskip
\noindent \emph{Step II:} Let us estimate $e^{\log(d+1)+h(y)} \sup_{z \in \tilde D} (2+|\nabla \tilde w (z)|) e^{\tilde w(z)}$ for $y \in \tilde \Omega_C$.

Due to Lemma~\ref{lem:ybound}, it can easily verified that 
\begin{align*}
e^{\log(d+1)+h(y)} \leq (d+1) \exp\left(d e^{\tilde C} \right), \quad \forall \, y \in \tilde \Omega_C.    
\end{align*}
Next by direct computation 
\begin{align*}
    \partial_{z_i} \tilde w(z)&\textstyle = -\frac{2}{1+z_i}+\frac{2x_i}{1-\sum_{l=1}^d T_l(z)}, \quad i=1,\dotso,k,\\
    \partial_{z_j} \tilde w(z)&\textstyle =-\frac{2}{z_j}+\frac{2}{1-\sum_{l=1}^d T_l(z)}, \quad j=k+1,\dotso, d. 
\end{align*}
Therefore according to Lemma~\ref{lem:ybound}, $|\nabla_z \tilde w(z)| \leq 2d e^{\tilde C}$. Moreover, we have $\tilde w(z) \leq \tilde C-h(z) \leq \tilde C+2d$, and therefore 
\begin{align*}
    e^{\log(d+1)+h(y)} \sup_{z \in \tilde D} (2+|\nabla \tilde w (z)|) e^{\tilde w(z)} \leq (d+1)\exp\left(d e^{\tilde C}\right)\left( 2+2d e^{\tilde C} \right) e^{\tilde C+2d} \leq 8d^2 \exp \left(5d e^{\tilde C} \right).
\end{align*}
\end{proof}

Let us proceed to the second order estimate. By a Pogorelov type interior estimate, we have:  
\begin{lemma}\label{lem:secondorder}
Suppose $C>10$ and denote $\iota=\frac{1}{4e^{C+10}}$. Then we have 
\begin{align*}
    \textstyle \sup_{y \in \tilde \Omega_C, \, \xi \in \mathcal{S}^{d-1}}\left(C-\tilde \eta (y)-h(y)\right)^{2d} \, \nabla^2_{\xi\xi} \tilde\eta \, e^{\frac{ \iota|y|^2}{2}}\leq K(C),
\end{align*}
where $K(C)$ is a positive constant that only depends on $C$. 
\end{lemma}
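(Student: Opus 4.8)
The plan is to establish Lemma~\ref{lem:secondorder} by a Pogorelov-type interior argument, essentially transcribing the proof of Lemma~\ref{lem:e2} into the setting of $\tilde\eta=\tilde\eta^{x_{1:k}}$ while being careful to use only bounds that are uniform in $x_{1:k}$. First I would fix $C>10$ and consider the auxiliary quantity
\[
\textstyle\sup_{y\in\bar{\tilde\Omega}_C,\ \xi\in\mathcal S^{d-1}}(C-\tilde\eta(y)-h(y))^{2d}\,\nabla^2_{\xi\xi}\tilde\eta(y)\,e^{\iota|y|^2/2}.
\]
Since $\tilde\eta+h\to+\infty$ on $\partial\tilde\Omega_C$ (as $\tilde\eta\ge\tilde w-\log(d+1)$ and $\tilde w+h\to\infty$ on the boundary by Lemma~\ref{lem:ybound}), the prefactor vanishes there, so the supremum is attained at an interior point $y_0$ and in some direction that — after an orthonormal change of coordinates — we may take to be $e_1$, with $\nabla^2\tilde\eta(y_0)$ diagonal; write $\lambda_i:=\tilde\eta_{ii}(y_0)$. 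Then I would pass to the logarithmic test function $\rho(y):=2d\log(C-\tilde\eta(y)-h(y))+\log\tilde\eta_{11}(y)+\tfrac{\iota}{2}|y|^2$, which also attains its maximum at $y_0$.

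Next I would differentiate the Monge–Amp\`ere equation $\tilde\eta=\log\det(\tfrac12\nabla^2\tilde\eta)$ once and twice, exactly as in \eqref{eq:2orderMA'}, to get $\tilde\eta_i=\tilde\eta^{lk}\tilde\eta_{lki}$ and $\tilde\eta_{ij}=\tilde\eta^{lk}\tilde\eta_{lkij}-\tilde\eta^{lm}\tilde\eta^{kn}\tilde\eta_{mnj}\tilde\eta_{lki}$. Writing out $\rho_i=0$ (first-order condition) and $g^{ij}\rho_{ij}\le 0$ (negative semidefiniteness of $\nabla^2\rho$ at the max), substituting $i=j=1$ in the second-order MA identity to produce the good third-derivative term $\sum_{l,k}\tilde\eta_{lk1}^2/(\lambda_1\lambda_l\lambda_k)$, and using the first-order relation to absorb the cross terms $\tilde\eta_{11k}^2/\lambda_1^2$ (at the cost of a controlled term $\iota^2 y_k^2/\lambda_k$ coming from the $|y|^2$ factor rather than the $x_k^2/4$ of Lemma~\ref{lem:e2}), I expect to arrive at an inequality of the shape
\[
\textstyle\frac{2d^2}{C-\tilde\eta-h}+\frac{2d\,\tilde\eta_1^2}{\lambda_1(C-\tilde\eta-h)^2}\ \ge\ c\sum_{i=1}^d\frac{1}{\lambda_i},
\]
for an absolute $c>0$. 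Here the lower bound $\lambda_1\ge c'>0$ follows as before from $\lambda_1^d\ge\det\nabla^2\tilde\eta(y_0)=2^d e^{\tilde\eta(y_0)}\ge 2^d e^{-h(y_0)-\log(d+1)}$ together with the lower bound on $h$ (Lemma~\ref{lem:ybound}); and the $|\nabla\tilde\eta(y_0)|$ appearing on the left is controlled uniformly by Lemma~\ref{lem:firstorder}, since $y_0\in\tilde\Omega_C$ forces $\tilde\eta(y_0)+h(y_0)<C$ and hence $\tilde\eta(y_0)<C$ (as $h\ge -\,\text{const}$), so the bound of Lemma~\ref{lem:firstorder} applies with the constant $\tilde C$ attached to $C$. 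Dividing through by $\sum 1/\lambda_i\ge\sum_{i\ge 2}1/\lambda_i\ge (d-1)(\lambda_1/(2^d e^{\tilde\eta}))^{1/(d-1)}$ yields an upper bound $\lambda_1\le K(C)\big(1+\tilde\eta_1^{2d-2}\big)\le K(C)$, and multiplying back by $(C-\tilde\eta(y_0)-h(y_0))^{2d}e^{\iota|y_0|^2/2}\le K(C)$ (the exponential is bounded on $\tilde\Omega_C$ by Lemma~\ref{lem:ybound}) closes the estimate at $y_0$, hence everywhere.

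The main obstacle I anticipate is bookkeeping rather than conceptual: one must verify that every constant produced along the way depends only on $C$ (equivalently $\tilde C$) and not on the suppressed parameter $x_{1:k}$, which is exactly why the barrier $\tilde w$ was renormalized so that $|\tilde w-\tilde\eta|\le\log(d+1)$ uniformly, why $h$ was introduced to make $\tilde\eta+h$ blow up on $\partial\tilde\Omega_C$, and why Lemma~\ref{lem:ybound} and Lemma~\ref{lem:firstorder} were proved with $x_{1:k}$-free constants. A secondary subtlety is that the weight $e^{\iota|y|^2/2}$ (with the specific choice $\iota=\tfrac1{4e^{C+10}}$) replaces the $e^{|x|^2/4}$ of Lemma~\ref{lem:e2}; I would check that $\iota$ is small enough that the term $\tfrac{\iota}{2}\sum_i\tilde\eta^{ii}=\tfrac{\iota}{2}\sum 1/\lambda_i$ arising from $g^{ij}\rho_{ij}$, and the correction $\iota^2 y_k^2/\lambda_k$ with $|y_k|\le e^{\tilde C/2}$ from Lemma~\ref{lem:ybound}, can both be absorbed into the positive $\tfrac12\sum 1/(\text{const}\cdot\lambda_i)$-type terms, which is precisely what forces $\iota$ to be exponentially small in $C$. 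Modulo these uniformity checks, the argument is the same Pogorelov computation already carried out in Lemma~\ref{lem:e2}.
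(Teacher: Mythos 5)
Your plan is essentially the paper's proof: Pogorelov interior estimate with the test function $\rho(y)=2d\log(C-\tilde\eta(y)-h(y))+\log\tilde\eta_{11}(y)+\tfrac{\iota}{2}|y|^2$, diagonalizing $\nabla^2\tilde\eta(y_0)$, differentiating the Monge--Amp\`{e}re equation twice, using $\rho_i=0$ and $\tilde\eta^{ij}\rho_{ij}\le 0$, choosing $\iota$ small so the $|y|^2$ term is absorbable, and invoking Lemma~\ref{lem:ybound} and Lemma~\ref{lem:firstorder} for $x_{1:k}$-uniform control of $|y_0|$, $\tilde\eta(y_0)$ and $|\nabla\tilde\eta(y_0)|$. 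One step is stated out of order: the inequality $\textstyle\frac{2d^2}{C-\tilde\eta-h}+\frac{2d(\tilde\eta_1+2)^2}{\lambda_1(C-\tilde\eta-h)^2}\ge c\sum_i\lambda_i^{-1}$ combined with the AM--GM lower bound on $\sum_{i\ge 2}\lambda_i^{-1}$ does \emph{not} yield $\lambda_1\le K(C)(1+\tilde\eta_1^{2d-2})\le K(C)$; it yields $\lambda_1\le K(C)\bigl(\tfrac{1}{C-\tilde\eta-h}+\tfrac{(\tilde\eta_1+2)^2}{(C-\tilde\eta-h)^2}\bigr)^{d-1}$, whose right-hand side blows up as $y_0$ approaches $\partial\tilde\Omega_C$ (and indeed $\lambda_1$ itself is \emph{not} uniformly bounded on $\tilde\Omega_C$ — that is why the estimate is stated with the cutoff prefactor). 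It is only upon multiplying by $(C-\tilde\eta(y_0)-h(y_0))^{2d}$, with the exponent $2d$ chosen precisely to absorb the worst power $(C-\tilde\eta-h)^{-2(d-1)}$, that the singular factors cancel and one lands at $(C-\tilde\eta-h)^{2d}\lambda_1 e^{\iota|y_0|^2/2}\le K(C)(1+(\tilde\eta_1+2)^{2d-2})\le K(C)$. This is a bookkeeping slip rather than a conceptual gap — you already carry the prefactor from the start — but as written the chain of inequalities does not go through.
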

\begin{proof}
Without loss of generality, we assume the maximizer on the left hand is obtained at $y_0 \in \tilde \Omega_C$, and $\xi=e_1$. Let us denote 
\begin{align*}
  \textstyle   \rho(y)=2d \log(C-\tilde \eta(y)-h(y))+ \log(\tilde \eta_{11}(y))+ \frac{\iota|y|^2}{2},
\end{align*}
and it is straightforward that $y_0 \in \argmax_{y \in \tilde \Omega_C} \rho(y)$. By choosing another orthonormal coordinates, without loss of generality, we assume that $\nabla^2 \tilde\eta(y_0)$ is diagonal, and denote $\lambda_i= \tilde\eta_{ii}(y_0)$ for each $i=1,\dotso, d$. Let us also denote by $(\tilde \eta^{ij})_{i,j=1}^d$ the inverse matrix of $\nabla^2 \tilde\eta(y_0)$. From now on, all the derivatives are taken at $y_0$, and we adopt the Einstein summation notation. The proof will be divided into several steps.

\medskip

\noindent \emph{Step I:} Taking derivatives on both sides of the Monge-Amp\`{e}re $\tilde \eta= \log \det \left(\frac{1}{2} \nabla^2 \tilde \eta \right)$, we have 
\begin{align}\label{eq:2orderMA}
   \teta_i&= \teta^{lk}\teta_{lki}, \quad  \quad\quad\quad\quad\quad\quad \ \ \ i=1,\dotso, d,  \notag \\
   \teta_{ij}&=\teta^{lk}\teta_{lkij}-\teta^{lm}\teta^{kn}\teta_{mnj}\teta_{lki}, \quad i,j=1,\dotso, d.
\end{align}
As $\rho(y_0)=\sup_{y \in \tilde \Omega_C} \rho(y)$, the first order condition implies that 
\begin{align}\label{eq:1order}
   \textstyle  0=\rho_i=-\frac{2d(\teta_i+2)}{C-\teta-h}+\frac{\teta_{11i}}{\teta_{11}}+\iota y_i,  \quad  \quad i=1,\dotso,d.
\end{align}
Taking derivative again, 
\begin{align*}
  \textstyle  \rho_{ij}=-\frac{2d \teta_{ij}}{C-\teta-h}-\frac{2d (\teta_i+2)(\teta_j+2)}{(C-\teta-h)^2}+\frac{\teta_{11ij}}{\lambda_1}-\frac{\teta_{11i}\teta_{11j}}{\lambda_1^2} +\iota \delta_{ij}, \quad \quad i,j=1,\dotso, d . 
\end{align*}
The maximality condition implies that as a matrix $\nabla^2 \rho$ is negative semidefinite, and hence due to the convexity of $\teta$
\begin{align*}
   \textstyle  0 \geq \teta^{ij} \rho_{ij}=- \frac{2d^2}{C-\teta-h}-\sum_{i=1}^d \frac{2d (\teta_i+2)^2 }{\lambda_i (C-\teta-h)^2}+\frac{\teta^{ii }\teta_{11ii}}{\lambda_1}-\sum_{i=1}^d \frac{\teta_{11i}^2}{\lambda_1^2\lambda_i}+\iota\sum_{i=1}^d \frac{1}{\lambda_i }.
\end{align*}

Setting  $i=j=1$ in \eqref{eq:2orderMA}, we get that $\teta^{ii}\teta_{11ii}=\teta_{11}+\teta^{ll}\teta^{kk}\teta_{lk1}^2$, and hence 
\begin{align*}
    0 \geq &\textstyle  - \frac{2d^2}{C-\teta-h}-\sum_{i=1}^d \frac{2d (\teta_i+2)^2 }{\lambda_i (C-\teta-h)^2}+1+\sum_{l,k=1}^d \frac{\teta_{lk1}^2}{\lambda_1\lambda_l \lambda_k}-\sum_{i=1}^d \frac{\teta_{11i}^2}{\lambda_1^2\lambda_i}+\iota\sum_{i=1}^d \frac{1}{\lambda_i } \\
    \geq & \textstyle - \frac{2d^2}{C-\teta-h}-\sum_{i=1}^d \frac{2d (\teta_i+2)^2 }{\lambda_i (C-\teta-h)^2}+1+\sum_{k=2}^d \frac{\teta_{11k}^2}{\lambda_1^2 \lambda_k}+\iota\sum_{i=1}^d \frac{1}{\lambda_i }.
\end{align*}
According to \eqref{eq:1order}, it can be easily seen that for $k=2,\dotso, d$, 
\begin{align*}
    \textstyle -\frac{2d (\teta_k+2)^2 }{\lambda_k (C-\teta-h)^2}=-\frac{1}{2d \lambda_k}\left(\frac{\teta_{11k}}{\lambda_1}+\iota y_k \right)^2 \geq -\frac{1}{d \lambda_k}\left(\frac{\teta_{11k}^2}{\lambda_1^2}+\iota^2 y_k^2 \right).
\end{align*}
Therefore, by direct computation we get that 
\begin{align*}
   \textstyle  \frac{2d^2}{C-\teta-h} +\frac{2d (\teta_1+2)^2 }{\lambda_1 (C-\teta-h)^2} \geq 1+\sum_{k=2}^d \left( \frac{\teta_{11k}^2}{\lambda_1^2 \lambda_k}-\frac{1}{d}\frac{\teta_{11k}^2}{\lambda_1^2 \lambda_k}\right) +\sum_{i=1}^d \frac{\iota-\iota^2y_i^2/d}{\lambda_i}. 
\end{align*}
According to the choice of $\iota= \frac{1}{4e^{C+10}}$ and Lemma~\ref{lem:ybound}, we have 
\begin{align*}
    \textstyle \iota^2 y_i^2 /d \leq  \frac{\iota  (d+1)e^{C+10}}{4de^{C+10}}  \leq \frac{\iota}{2},
\end{align*}
and therefore 
\begin{align}\label{eq:step1}\textstyle 
\frac{2d^2}{C-\teta-h} +\frac{2d (\teta_1+2)^2 }{\lambda_1 (C-\teta-h)^2} \geq \sum_{i=1}^d \frac{\iota}{2\lambda_i}.
\end{align}

\medskip

\noindent \emph{Step II:} Let us prove a lower bound for $\lambda_1$ using the Monge-Amp\`{e}re equation. Note that $$\textstyle \lambda_1^d \geq \lambda_1 \dotso \lambda_d=\det \nabla^2\tilde \eta(y_0)=2^d e^{\tilde \eta}\geq 2^d e^{\tilde w - \log(d+1)}=\frac{2^d}{d+1} e^{\tilde w}.$$
According to the definition, it can be easily seen that 
\begin{align*}
  \textstyle   \tilde w(y)= -2 \sum_{i=1}^k \log(1+y_i) -2 \sum_{i=k+1}^d \log(y_i)-2 \log \left(1 -\sum_{i=1}^d T_i(y) \right), 
\end{align*}
and hence according to Lemma~\ref{lem:ybound}, $\tilde w$ has a lower bound only depends on $C$. From now on, we denote by $K(C)$ a constant that only depends on $C$ and it is allowed to change from line to line. Hence we conclude that $\lambda_1 \geq K(C)$, and together with \eqref{eq:step1},
\begin{align*}\textstyle 
\frac{1}{C-\teta-h} +\frac{ (\teta_1+2)^2 }{(C-\teta-h)^2} \geq K(C) \sum_{i=1}^d \frac{1}{\lambda_i}.
\end{align*}

Now, we would like to derive an upper bound of $\lambda_1$,
\begin{align*}
  \textstyle   \sum_{i=1}^d \frac{1}{\lambda_i} \geq \sum_{i=2}^d \frac{1}{\lambda_i} \geq (d-1) \sqrt[d-1]{\frac{1}{\lambda_2 \dotso \lambda_d }}=(d-1) \sqrt[d-1]{\frac{\lambda_1}{\det \nabla^2 \tilde \eta }} =(d-1) \sqrt[d-1]{\frac{\lambda_1}{2^d e^{\tilde \eta} }}.
\end{align*}
Since $y \in \tilde \Omega_C$, we have $e^{\tilde \eta(y)} \leq  e^{C-h(y)} \leq e^{C+2d}$ according to Lemma~\ref{lem:ybound}. Therefore it can be easily seen that 
\begin{align*}
   \textstyle \lambda_1 \leq K(C) \left( \sum_{i=1}^d \frac{1}{\lambda_i} \right)^{d-1}\leq K(C) \left(\frac{1}{C-\teta-h} +\frac{ (\teta_1+2)^2 }{(C-\teta-h)^2} \right)^{d-1},
\end{align*}
and hence together with Lemma~\ref{lem:ybound} and Lemma~\ref{lem:firstorder}
\begin{align*}
    \left(C-\tilde \eta (y_0)-h(y_0)\right)^{2d} \, \lambda_1 \, e^{\frac{ \iota|y_0|^2}{2}} \leq K(C) \left(1+ (\tilde \eta_1+2)^{2d-2}\right) \leq K(C).
\end{align*}

\end{proof}

\begin{lemma}\label{lem:fsecondorder}
     Suppose $x^0 \in \partial \Delta$ with $x^0_1=\dotso=x^0_k=0$, $x^0_{k+1},\dotso,x^0_d >0$, and $\sum_{j=k+1}^d x_j^0<1$. Taking $r= (1-\sum_{i=k+1}^d x^0_i)/{(2d)}$,  there exist two positive constants $c_1(x^0)$, $c_2(x^0)$ such that for any $x_1,\dotso, x_k \in (0,r)$,
    \begin{align*}
   c_1(x^0) I_d \leq \nabla^2 \tilde \eta^{x_{1:k}}(x^0) \leq c_2(x^0) I_d,
  \end{align*}
  where $\tilde\eta^{x_{1:k}}$ is defined in \eqref{eq:tileta}.
\end{lemma}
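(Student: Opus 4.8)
The plan is to deduce the asserted two-sided bound on $\nabla^2\tilde\eta^{x_{1:k}}(x^0)$ as a short corollary of three facts already in hand: the interior second-order (Pogorelov-type) estimate of Lemma~\ref{lem:secondorder}, the comparison $\tilde w^{x_{1:k}}-\log(d+1)\le\tilde\eta^{x_{1:k}}\le\tilde w^{x_{1:k}}$ coming from Corollary~\ref{cor:val_function_fg}, and the Monge-Amp\`{e}re equation itself, which fixes $\det\nabla^2\tilde\eta^{x_{1:k}}$. No new analysis is needed; the only point requiring care is that all constants remain uniform in $x_{1:k}\in(0,r)^k$.

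First I would locate $x^0=(0,\dotso,0,x^0_{k+1},\dotso,x^0_d)$ relative to the sublevel sets $\tilde\Omega_C^{x_{1:k}}$. Since $T^{x_{1:k}}(x^0)=(x_{1:k},x^0_{k+1:d})$ and $\sum_{i=1}^k x_i<kr\le dr=\frac12\big(1-\sum_{j=k+1}^d x^0_j\big)$, the point $x^0$ is interior to $\Omega^{x_{1:k}}$ (so $\tilde\eta^{x_{1:k}}$ is smooth near it); moreover $h(x^0)=0$ (the first $k$ coordinates of $x^0$ vanish), while from the definition of $\tilde w^{x_{1:k}}$,
\begin{align*}
\tilde w^{x_{1:k}}(x^0)=-2\sum_{i=k+1}^d\log(x^0_i)-2\log\Big(1-\sum_{i=1}^k x_i-\sum_{j=k+1}^d x^0_j\Big).
\end{align*}
The argument of the last logarithm lies in $\big(\frac12(1-\sum_{j=k+1}^d x^0_j),\,1-\sum_{j=k+1}^d x^0_j\big)\subset(0,1)$, so $\tilde w^{x_{1:k}}(x^0)$ stays between two positive constants $m_0(x^0)\le M_0(x^0)$ depending only on $x^0$ (and on $k,d$), uniformly in $x_{1:k}$; combined with the comparison inequality this gives $m_0(x^0)-\log(d+1)\le\tilde\eta^{x_{1:k}}(x^0)\le M_0(x^0)$.

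Next I would fix $C:=\max\{M_0(x^0)+1,\,11\}$, so that $C>10$ and $x^0\in\tilde\Omega_C^{x_{1:k}}$ with the uniform margin $C-\tilde\eta^{x_{1:k}}(x^0)-h(x^0)\ge C-M_0(x^0)\ge 1$ for every admissible $x_{1:k}$. Evaluating Lemma~\ref{lem:secondorder} at $y=x^0$ and using $e^{\iota|x^0|^2/2}\ge 1$ gives, for all $\xi\in\mathcal{S}^{d-1}$,
\begin{align*}
\nabla^2_{\xi\xi}\tilde\eta^{x_{1:k}}(x^0)\le\big(C-\tilde\eta^{x_{1:k}}(x^0)-h(x^0)\big)^{-2d}K(C)\le K(C)=:c_2(x^0),
\end{align*}
which is the upper bound $\nabla^2\tilde\eta^{x_{1:k}}(x^0)\le c_2(x^0)I_d$. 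For the matching lower bound, $\nabla^2\tilde\eta^{x_{1:k}}(x^0)$ is positive definite by strict convexity of $\tilde\eta^{x_{1:k}}$, so all its eigenvalues lie in $(0,c_2(x^0)]$, while the equation forces $\det\nabla^2\tilde\eta^{x_{1:k}}(x^0)=2^d e^{\tilde\eta^{x_{1:k}}(x^0)}\ge\frac{2^d}{d+1}e^{m_0(x^0)}=:m(x^0)>0$; hence $\lambda_{min}(\nabla^2\tilde\eta^{x_{1:k}}(x^0))\ge m(x^0)/c_2(x^0)^{d-1}=:c_1(x^0)>0$, i.e.\ $\nabla^2\tilde\eta^{x_{1:k}}(x^0)\ge c_1(x^0)I_d$.

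I do not expect a serious obstacle: the analytic work is already packaged in Lemmas~\ref{lem:ybound}--\ref{lem:secondorder}, and the one delicate point---uniformity in $x_{1:k}$---is resolved by the explicit formula for $\tilde w^{x_{1:k}}(x^0)$ together with the elementary bound $\sum_{i=1}^k x_i<dr$. As a closing remark, Proposition~\ref{prop:secondorder} then drops out: with $D=Diag(x_1,\dotso,x_k,1,\dotso,1)$ the identity \eqref{eq:htransfer} reads $\nabla^2 g(x_{1:k},x^0_{k+1:d})=D^{-1}\nabla^2\tilde\eta^{x_{1:k}}(x^0)D^{-1}$, so $c_1(x^0)I_d\le\nabla^2\tilde\eta^{x_{1:k}}(x^0)\le c_2(x^0)I_d$ conjugates into the asserted bounds with $Diag(1/x_1^2,\dotso,1/x_k^2,1,\dotso,1)$.
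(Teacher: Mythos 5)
Your proposal is correct and follows essentially the same line as the paper's proof: both use the explicit value of $\tilde w^{x_{1:k}}(x^0)$ together with the pinching $\tilde w -\log(d+1)\le \tilde\eta\le\tilde w$ to choose a uniform $C$, then evaluate Lemma~\ref{lem:secondorder} at $y=x^0$ (noting $h(x^0)=0$ and $e^{\iota|x^0|^2/2}\ge 1$) to bound $\lambda_{max}$, and finally recover the lower eigenvalue bound from the Monge--Amp\`ere equation via $\lambda_{min}\ge \det\nabla^2\tilde\eta / c_2^{d-1}$. The only cosmetic difference is the precise choice of $C$ (the paper takes the explicit upper bound on $\tilde\eta^{x_{1:k}}(x^0)$ plus $10$, giving a margin of $10$, while you take $\max\{M_0(x^0)+1,11\}$, giving a margin of $1$), which does not affect the conclusion.
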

\begin{proof}
 We note that for $x_1,\dotso,x_k \in (0,r)$,
\begin{align*}
    \tilde \eta^{x_{1:k}}(x^0) \leq &\textstyle  \tilde w^{x_{1:k}}(x^0)= -2 \sum_{i=k+1}^d \log(x^0_i)-2 \log \left(1-\sum_{i=1}^k x_1 - \sum_{i=k+1}^d x^0_i \right) \\
    \leq &\textstyle  -2 \sum_{i=k+1}^d \log(x^0_i)-2 \log \left(\frac{1- \sum_{i=k+1}^d x^0_i }{2} \right).
\end{align*}
Taking $C:= -2 \sum_{i=k+1}^d \log(x^0_i)-2 \log \left(\frac{1- \sum_{i=k+1}^d x^0_i }{2} \right)+10 $, we apply Lemma~\ref{lem:secondorder} to get 
\begin{align*}
   \textstyle  \left(C- \tilde \eta^{x_{1:k}}(x^0)\right)^{2d} \lambda_{max} \left(\nabla^2 \tilde \eta^{x_{1:k}} (x^0)\right) \leq K(C).
\end{align*}
According to the choice of $C$, it is straightforward that $\left(C-\tilde\eta^{x_{1:k}}(x^0) \right)^{2d} \geq 10^{2d}$. Consequently, we obtain that 
\begin{align*}
   \textstyle  \lambda_{max} \left(\nabla^2 \tilde \eta (y^0)\right) \leq \frac{K(C)}{10^{2d}}=:c_2,
\end{align*}
and $c_2$ is a constant that only depends on $x^0_{k+1},\dotso, x^0_d$.

To conclude, according to the Monge-Amp\`{e}re equation, we have 
\begin{align*}
    \textstyle \lambda_{min} \left(\nabla^2 \tilde \eta^{x_{1:k}} (x^0)\right) c_2^{d-1} \geq \det \nabla^2 \tilde \eta^{x_{1:k}} (x^0)= 2^d e^{\tilde\eta^{x_{1:k}}(x^0)}.
\end{align*}
Due to the inequality $|\tilde \eta^{x_{1:k}}-\tilde w^{x_{1:k}}| \leq \log (d+1)$,$$\textstyle \tilde\eta^{x_{1:k}}(x^0) \geq \tilde w^{x_{1:k}}(x^0)- \log(d+1)\geq  -2 \sum_{i=k+1}^d \log(x^0_i)-2 \log \left(1 - \sum_{i=k+1}^d x^0_i \right)-\log(d+1),$$
and hence 
\begin{align*}
     \lambda_{min} \left(\nabla^2 \tilde \eta^{x_{1:k}} (x^0)\right) \geq c_1,
\end{align*}
where $c_1$ is a constant only depends on $x^0_{k+1},\dotso, x^0_d$.
\end{proof}

\begin{proof}[Proof of Proposition~\ref{prop:secondorder}]
It can be easily seen that $ \nabla^2 \eta^{x_{1:k}}(y)=\nabla^2 \tilde \eta^{x_{1:k}}(y)$. Denoting $D= Diag(1/x_1,\dotso,1/x_d, 1, \dotso, 1)$, due to \eqref{eq:htransfer} we obtain the equality $$\nabla^2 g(x_{1:k},x^0_{k+1:d})= D \, \nabla^2 \tilde \eta^{x_{1:k}}( (T^{x_{1:k}})^{-1}(x_{1:k},x^0_{k+1:d})) \, D, $$
Therefore the result follows from  Lemma~\ref{lem:fsecondorder} and the fact that $(T^{x_{1:k}})^{-1}(x_{1:k}, x^0_{k+1:d})=x^0$ 
\end{proof}

\begin{proof}[Proof of Theorem~\ref{thm:quantitive}]

Part \emph{(i)} is a direct corollary of Proposition~\ref{prop:secondorder}. Let us prove the second claim of the theorem.  We define the sets, for $i=1,\dotso,d$,
    \begin{align*}
    E_i=&\textstyle  \left\{x \in \Delta: \, x_i \geq \frac{1}{d+1} \right\}, \\
    E_0=& \textstyle \left\{x \in \Delta: \, \sum_{i=1}^d x_i \leq \frac{d}{d+1} \right\}.
    \end{align*}
It is clear that $E_0 \cup E_1 \cup \dotso \cup E_d=\Delta$. We prove Theorem~\ref{thm:quantitive} (ii) on each $E_i$, $i=0,\dotso, d$. 

\medskip 

\noindent \emph{Step I:} It is sufficient to show the result on $E_0$. Indeed, if $x \in E_i$, we construct a linear transformation $L: \Delta \to \Delta$ through 
\begin{align*}
    y_i&=L_i(x)=1- x_1-\dotso- x_d, \\
    y_j&=L_j(x)=x_j, \hspace{3cm} j \in \{1,\dotso, d\} \setminus \{i\}. 
\end{align*}
Then if $x \in E_i$ with $x_i \geq \frac{1}{d+1}$, one gets immediately that $\sum_{i=1}^d L_j(x)=1-x_i \leq \frac{d}{d+1}$ and thus $L(x) \in E_0$. Moreover, $L=L^{-1}$ and the determinant of the Jacobian of $L$ equals to $-1$. Therefore $\bar g(x):= g(L^{-1}(x))$ satisfies \eqref{eq:Monge} as well. 
\medskip

\noindent \emph{Step II:} Let us prove Theorem~\ref{thm:quantitive} (ii) on $E_0$. For any $x \in E_0$, as before we define 
\begin{align*}
    T^x(y):=& (x_1(1+y_1),x_2(1+y_2),\dotso, x_d(1+y_d)), \quad y \in \Omega^x:=(T^x)^{-1}(E_0) \\
    \tilde\eta^x(y):=&\textstyle g(T^x(y))+2\sum_{i=1}^d \log(x_i), \\
    \tilde w^x(y):=&\textstyle w(T^x(y))+2\sum_{i=1}^d \log(x_i). 
\end{align*}
By direct computation, it can be easily seen that
\begin{align*}
    \nabla_{y_i} \tilde\eta^x (0)=x_i \nabla_{x_i}g(x), \quad \nabla^2_{y_iy_j} \tilde \eta^x (0)=x_i x_j \nabla^2_{x_ix_j}g(x), \quad i,j=1,\dotso, d.
\end{align*}
Therefore, we have the equality
\begin{align*}
     \nabla g(x)^{\top} (\nabla^2 g(x))^{-1} \nabla g(x)= \nabla \tilde \eta^x(0)^{\top} (\nabla^2 \tilde \eta^x (0))^{-1} \nabla \tilde \eta^x(0),
\end{align*}
and the result directly follows from Lemma~\ref{lem:firstorder} and Lemma~\ref{lem:fsecondorder}.
    
\end{proof}

\section{The multidimensional Aldous martingale}\label{sec6}
In this section, we show that \eqref{eq:AMTG} is the unique optimizer to \eqref{eq:optimal}. Let us define \[\textstyle \sigma_*(t,x):=\sqrt{\frac{2 (\nabla^2 g(x))^{-1} }{1-t}}.\]
Given $x\in \Delta$, we will first argue that \eqref{eq:AMTG}, i.e., 
  \begin{align}\label{eq:Ald_final}
  dM_t=\sigma_*(t,M_t) \, d B_t, \,\, M_0=x, 
  \end{align}
admits a unique strong solution  up to $1 \wedge \tau$, where $\tau:=\inf\{t\in[0,1]: M_t\notin\Delta\}$ denotes the first exit time of $\Delta$.
 By the deterministic time-change  $Y_t:=M_{1-e^{-t/2}}$, it can be easily verified that  
    \[\textstyle  dY_t= \sqrt{(\nabla^2g(Y_t))^{-1}}\,dW_t, \]    
    for a $d$-Brownian motion $(W_t)_{t \in\mathbb R_+}$. Thus we can w.l.o.g.\ switch between $Y$ and $M$ as needed.\\

\begin{lemma}\label{lem:exists_aldous}
    There exists a unique strong solution to \eqref{eq:Ald_final}  over $[0, 1\wedge \tau)$. 
\end{lemma}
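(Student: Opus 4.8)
The plan is to reduce this to a textbook existence-and-uniqueness result for stochastic differential equations with locally Lipschitz, locally non-degenerate coefficients on an open domain, exploiting that $g$ is smooth and strictly convex on the open simplex.

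First I would record the regularity of the diffusion coefficient. By Proposition~\ref{prop:existence} (equivalently Theorem~\ref{thm:quantitive}), $g$ is smooth on $\Delta$, and since $\det\nabla^2 g(x)=2^d e^{g(x)}>0$ the Hessian $\nabla^2 g(x)$ is symmetric positive definite for every $x\in\Delta$. Hence $x\mapsto(\nabla^2 g(x))^{-1}$ is smooth (inversion is smooth on invertible matrices) and positive definite on $\Delta$. The principal square root is a smooth map from the open cone of symmetric positive definite matrices into itself — for instance because $X\mapsto X^2$ has derivative $H\mapsto XH+HX$, which is invertible at every positive definite $X$, so the inverse function theorem applies — so $b(x):=\sqrt{(\nabla^2 g(x))^{-1}}$ is smooth on $\Delta$ and $\sigma_*(t,x)=\sqrt{2/(1-t)}\,b(x)$ is smooth on $[0,1)\times\Delta$. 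In particular, on every compact $K\subset\Delta$ the field $\nabla^2 g$ is bounded with bounded inverse and $b$ is Lipschitz, and $\sigma_*$ is Lipschitz on $K\times[0,T]$ for each $T<1$.

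Next I would pass to the time-changed formulation $Y_t:=M_{1-e^{-t/2}}$ already recorded in the text, which turns \eqref{eq:Ald_final} into the autonomous SDE $dY_t=b(Y_t)\,dW_t$, $Y_0=x$, driven by a $d$-dimensional Brownian motion $W$. Since $b$ is locally Lipschitz and locally bounded on the open set $\Delta$, the classical localization argument applies: take the exhaustion $\Delta=\bigcup_n\Omega_n$ with $\Omega_n=\{w<n\}$, each $\overline{\Omega_n}$ a compact subset of $\Delta$ because $w\to+\infty$ on $\partial\Delta$; for each $n$ extend $b|_{\overline{\Omega_n}}$ to a globally Lipschitz and bounded $b_n$ on $\mathbb R^d$; solve $dY^n=b_n(Y^n)\,dW$, $Y^n_0=x$, by the standard global-Lipschitz theorem (e.g.\ \cite[Chapter~5]{KaSh91}); and stop $Y^n$ at $\zeta_n:=\inf\{t:Y^n_t\notin\Omega_n\}$. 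Pathwise uniqueness for the global-Lipschitz equations forces the stopped processes to agree, so they paste into a process $Y$ on $[0,\zeta)$ with $\zeta:=\lim_n\zeta_n=\inf\{t:Y_t\notin\Delta\}$; there is no finite-time escape to infinity since $\Delta$ is bounded, so $Y$ is the unique strong solution up to its first exit time of $\Delta$. Undoing the time change — $s=1-e^{-t/2}$ is a bijection of $[0,\infty)$ onto $[0,1)$ — yields a unique strong solution $M$ of \eqref{eq:Ald_final} on $[0,1\wedge\tau)$, $\tau$ being precisely the first exit time of $\Delta$ by $M$, and $\tau>0$ a.s.\ by continuity since $x\in\Delta$.

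I do not expect a serious obstacle in this particular lemma: the boundary asymptotics of $g$ from Theorem~\ref{thm:quantitive} play no role here, only the interior smoothness and non-degeneracy of $\nabla^2 g$, and the rest is standard. The only point deserving an explicit line is the smoothness (hence local Lipschitzness) of $b=\sqrt{(\nabla^2 g)^{-1}}$, i.e.\ smoothness of the matrix square root on positive definite matrices. The genuinely delicate work — controlling $M$ up to and including time $1$ and proving $M_1\in\mathcal V$ — is where Theorem~\ref{thm:quantitive} will be needed, but that belongs to the results following this lemma rather than to it.
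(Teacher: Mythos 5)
Your argument is correct and follows essentially the same route as the paper: time-change to the autonomous SDE for $Y$, exhaust $\Delta$ by compactly contained open sets, extend the (locally Lipschitz) coefficient via Kirszbraun to invoke the global-Lipschitz existence/uniqueness theorem on each piece, and paste; the only stylistic additions are your explicit remark on the smoothness of the matrix square root (which the paper leaves implicit) and the observation that boundedness of $\Delta$ rules out escape to infinity.
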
    
\begin{proof}
    We pass to the SDE for $Y$. We let $\Delta^n$ be an increasing sequence of open subsets of $\Delta$ such that $\cup_n \Delta^n=\Delta$ and $\overline{\Delta^n}\subset \Delta$. By smoothness of $g$, and the fact that $\nabla^2g$ is non-singular throughout $\Delta$, we know that $\overline{\Delta^n}\ni x\mapsto\sqrt{(\nabla^2 g(x))^{-1}}$ is Lipschitz.

\medskip

    \noindent \emph{Step I:} We prove that there is at most one process $Y$ adapted to $W$, with $Y_0=x$, and such that $dY_t= \sqrt{(\nabla^2g(Y_t))^{-1}}\,dW_t$ for $t<\eta_\infty:=\inf\{s:Y_s\notin \Delta\}$.

    To wit, suppose that $Y^1,Y^2$ with these properties exist, and denote by $\eta_\infty^1,\eta_\infty^2$ the respective exit times. Also denote $\eta_n^i:=\inf \{s:Y^1_s\notin \Delta^n\}$, for $i\in\{1,2\}$, so $\eta_n^i\nearrow \eta_\infty^i$. For every $n$, we let $\sigma_n: \mathbb R^d \to \mathbb S^d$ be any Lipschitz function equal to $\sqrt{(\nabla^2 g(x))^{-1}}$ on $\overline{\Delta^n}$, its existence being guaranteed e.g.\ by the Kirszbraun theorem. Then for each $n$, the SDE $dZ_t=\sigma_n(Z_t) \,dW_t$ admits exactly one strong global solution starting from any initial condition. For $i\in\{1,2\}$, we can introduce the process $Y^{i,n}$ as follows: $Y^{i,n}_t:= Y^i_t$ for $t\leq\eta^i_n$, and for $t>\eta^i_n$ we define $Y^{i,n}_t$ as the value of $Z_{t-\eta^i_n}$, where $Z$ has been started at the position $Y^i_{\eta^i_n}$ and is being driven by the Brownian motion $(W_{s+\eta^i_n}-W_{\eta^i_n})_{s\in\mathbb R_+}$. Hence $Y^{i,n}$ fulfills the SDE for $Z$ with $Y^{i,n}=x$. By strong uniqueness, we conclude  $Y^{1,n}=Y^{2,n}$. In particular then $\eta^1_n=\eta^2_n=:\eta_n$, and $(Y^1_s)_{s\in [0,\eta_n]}=(Y^2_s)_{s\in [0,\eta_n]}$. Sending $n\to\infty$ we conclude that $\eta^1_\infty=\eta^2_\infty=:\eta_\infty$ and $(Y^1_s)_{s\in [0,\eta_\infty]}=(Y^2_s)_{s\in [0,\eta_\infty]}$.
\medskip
   
    \noindent\emph{Step II:}  We prove that there is a process $Y$ adapted to $W$, with $Y_0=x$, and such that $dY_t= \sqrt{(\nabla^2g(Y_t))^{-1}}\,dW_t$ for $t<\eta_\infty:=\inf\{s:Y_s\notin \Delta\}$.

For every $n$, we let $\sigma_n: \mathbb R^d \to \mathbb S^d$ be any Lipschitz function equal to $\sqrt{(\nabla^2 g(x))^{-1}}$ on $\overline{\Delta^n}$. We can define $Y$ inductively. First we introduce $Y^{(1)}$ as the unique strong solution to $dY_t^{(1)}=\sigma_1(Y_t^{(1)}) \,dW_t$ with $Y_0^{(1)}=x$. Defining $\eta_1=\inf\{s: Y_s^{(1)}\notin\Delta^1\}$, we set $Y_t= Y_t^{(1)} $ for $t\leq \eta_1$. Observe that $\eta_1=\inf\{s: Y_s\notin\Delta^1\}$ and that $dY_t= \sqrt{(\nabla^2 g(Y_t))^{-1}} \,dW_t$ for $t<\eta_1$. We can now extend the hitherto constructed process $Y$ to times $t>\eta_1$ by concatenating it with the solution of $dY_s^{(2)}=\sigma_2(Y_s^{(2)}) \,d\tilde W_s$ with $Y_0^{(2)}=Y_{\eta_1}$, where $\tilde W_s= W_{s+\eta_1}-W_{\eta_1}$. Calling $\eta_2$ the first exit time of $Y^{(2)}$ from $\Delta^2$, we can as before argue that $dY_t= \sqrt{(\nabla^2 g(Y_t))^{-1}}dW_t $ for $t\leq \eta_1+\eta_2 = \inf\{s:Y_s\notin\Delta^2\}$. Going on like this by induction, we conclude.
    
\end{proof}

In fact, by martingale convergence, { existence and} uniqueness in Lemma \ref{lem:exists_aldous} extends to the closure of the stochastic interval therein.  The proof of the next result is a Lyapunov argument based on \cite[Theorem 3.5]{Kh12}.

\begin{lemma}\label{lem:exit}
   Denoting $\hat \tau= \inf \{t \in [0,1]: M_t \not \in \Delta \}\wedge 1$, we have $\hat \tau=1$ almost surely. 
\end{lemma}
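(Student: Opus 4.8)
The plan is to use the solution $g$ of the Monge-Amp\`ere equation \eqref{eq:Monge} itself as a Lyapunov function, in the spirit of \cite[Theorem 3.5]{Kh12}. It is convenient to pass to the time-changed process $Y_t:=M_{1-e^{-t/2}}$, which solves $dY_t=\sqrt{(\nabla^2 g(Y_t))^{-1}}\,dW_t$ up to its exit time $\eta_\infty:=\inf\{t\ge 0:\,Y_t\notin\Delta\}$; under this time change the assertion $\hat\tau=1$ a.s.\ is equivalent to $\eta_\infty=+\infty$ a.s. The crucial point is that, since the diffusion matrix of $Y$ is $(\nabla^2 g(Y_t))^{-1}$, It\^o's formula gives
\[
g(Y_t)=g(x)+\int_0^t\nabla g(Y_s)^\top\sqrt{(\nabla^2 g(Y_s))^{-1}}\,dW_s+\tfrac12\,\Tr\big(I_{d\times d}\big)\,t=g(x)+(\text{local martingale})+\tfrac d2 t
\]
for $t<\eta_\infty$: the generator of $Y$ sends $g$ to the \emph{constant} $d/2$. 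At the same time $g$ is bounded below by the positive constant $\inf_{\Delta}w-\log(d+1)>0$ and satisfies $g\to+\infty$ at $\partial\Delta$, both by Lemma~\ref{lem:maincomponent} and Corollary~\ref{cor:val_function_fg}. These are exactly the ingredients of a non-explosion argument.

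To implement it, take the compact exhaustion $\Delta^n:=\{x\in\Delta:\,w(x)<n\}$, so that $\Delta^n\uparrow\Delta$, $\overline{\Delta^n}$ is compact in $\Delta$, and $g\ge n-\log(d+1)$ on $\partial\Delta^n$; set $\eta_n:=\inf\{t:\,Y_t\notin\Delta^n\}\nearrow\eta_\infty$. On $[0,\eta_n]$ the process $Y$ stays in the compact set $\overline{\Delta^n}$, on which $\nabla g$ and $(\nabla^2 g)^{-1}$ are bounded, so the stochastic integral above stopped at $\eta_n$ is a true square-integrable martingale, whence
\[
\E\big[g(Y_{t\wedge\eta_n})\big]=g(x)+\tfrac d2\,\E[t\wedge\eta_n]\le g(x)+\tfrac d2 t\qquad\text{for all }t\ge0,\ n\in\mathbb N .
\]
Fix $T>0$. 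On the event $\{\eta_\infty\le T\}$ we have $T\wedge\eta_n=\eta_n$ and $g(Y_{\eta_n})\ge n-\log(d+1)$; since $g>0$ this yields $(n-\log(d+1))\,\mathbb P(\eta_\infty\le T)\le\E[g(Y_{T\wedge\eta_n})]\le g(x)+\tfrac d2 T$. Letting $n\to\infty$ forces $\mathbb P(\eta_\infty\le T)=0$, and as $T>0$ was arbitrary, $\eta_\infty=+\infty$ a.s.; undoing the time change gives $\hat\tau=1$ a.s.

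I expect no serious obstacle here. Once one observes that $g$ is the natural Lyapunov function — the diffusion coefficient being $(\nabla^2 g)^{-1}$ makes $\mathcal{L}g$ constant, which is the genuine use of the Monge-Amp\`ere structure together with the boundary blow-up of $g$ — the remainder is routine. The only points requiring a little care are the localization that promotes the local martingale to a genuine martingale on each $[0,\eta_n]$ and the bookkeeping of the deterministic time change relating $(M,\hat\tau)$ to $(Y,\eta_\infty)$.
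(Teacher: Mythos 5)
Your proof is correct and follows essentially the same route as the paper: pass to the time-changed process $Y$, use $g$ itself as the Lyapunov function (the Monge--Amp\`ere structure making $\mathcal L g$ controlled), localize on a compact exhaustion, and exploit the boundary blow-up of $g$ together with the lower bound $g\geq w-\log(d+1)>0$. The one small difference is cosmetic: the paper notes $\mathcal L g = d/2\leq c\,g$ and makes $e^{-ct}g(Y_t)$ a supermartingale in the standard Khasminskii manner, whereas you exploit the exact identity $\mathcal L g \equiv d/2$ to bound $\mathbb E[g(Y_{t\wedge\eta_n})]$ directly without the discounting factor, which is a slight streamlining of the same argument.
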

\begin{proof}
Passing to $Y$ it is clear that ``$\hat \tau=1$ a.s.'' is equivalent to ``$\eta=\infty$ a.s.'', where
    \begin{align*}
        \eta:=\inf\{t \geq 0: \, Y_t \not \in \Delta \}.
    \end{align*}

    For any $n \in \mathbb N$, take $\Delta_n:= \{x \in \Delta: \, g(x) < n \}$, and $\eta_n$ to be the exit time of $(Y_t)_{t \geq 0}$ from $\Delta_n$. As $ \cup_{n \geq 1}\Delta_n=\Delta$, and the volatility $\sqrt{(\nabla^2g(x))^{-1}}$ is a smooth function over $\bar\Delta_n$, the SDE of $(Y_t)_{t \geq 0}$ is well-defined over $[0,\eta_n]$.  As $g \geq w- \log(d+1) >0$ over $\Delta$ and $\inf_{x \in \Delta} \left( w(x)-\log(d+1)  \right)>0$ due to Lemma~\ref{lem:maincomponent}, we get that 
    \begin{align*}
       \textstyle  \frac{1}{2} \sum_{i,j=1}^d \nabla^2_{ij}g(x) [\nabla^2g(x)]^{-1}_{ij}=\frac{1}{2}\Tr\left(\nabla^2g(x)[\nabla^2g(x)]^{-1} \right ) =\frac{d}{2} \leq c g(x), \quad \forall \, x \in \Delta
    \end{align*}
    for a large enough positive constant $c$. A direct application of It\^{o}'s formula  shows that  $e^{-ct}g(Y_t)$ is super-martingale over $[0,\eta_n]$ for any $n \in \mathbb{N}$, and hence 
    \begin{align*}
       \E[g(Y_0)] \geq  \E[e^{-c(t \wedge \eta_n)} g(Y_{t \wedge \eta_n})] .
    \end{align*}

We prove that $\eta > t$ \emph{a.s.}\ for all $t >0$, from which it follows that  $\eta=\infty$ \emph{a.s.} Denote $A:=\{ \omega: \, \eta(\omega) \leq t \}$. It is clear $\eta_n \leq t$ over $A$, and 
\begin{align*}
   \E[g(Y_0)] \geq \E[e^{-c(t \wedge \eta_n)} g(Y_{t \wedge \eta_n})] \geq \E[\mathbbm{1}_A e^{-ct} g(Y_{\eta_n}) ]=\mathbb P(A) e^{-ct}n,  \quad  \forall n \in \mathbb N. 
\end{align*}
Therefore we get that $\mathbb P(A)=0$, which finishes the proof of this result. 
\end{proof}

Using Theorem~\ref{thm:quantitive}, we show that $(M_s)$ is indeed a win-martingale. 
\begin{lemma}\label{lem:appl_MA_win}
    $M_1\in \{e_0,e_1,\dots,e_d\}$ a.s.
\end{lemma}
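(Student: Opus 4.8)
\textbf{Proof plan for Lemma~\ref{lem:appl_MA_win}.} The plan is to rule out that the diffusion $M$ can accumulate anywhere on $\overline{\Delta}$ other than a vertex, by showing that any such accumulation would force the quadratic variation of $M$ to explode on $[t,1)$. First I would invoke Lemma~\ref{lem:exit}: $M$ stays in $\Delta$ throughout $[0,1)$ almost surely, so, being valued in the compact set $\overline{\Delta}$, it is a bounded (hence uniformly integrable) continuous martingale on $[0,1)$; by the martingale convergence theorem $M_1:=\lim_{t\uparrow1}M_t$ exists a.s.\ and $M_1\in\overline{\Delta}$. It therefore suffices to show $\mathbb P\big(M_1\in\overline{\Delta}\setminus\mathcal V\big)=0$, the relevant set being the union of the interior part $\{M_1\in\Delta\}$ and the non-vertex boundary part $\{M_1\in\partial\Delta\setminus\mathcal V\}$.

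Next I would record the single global input that is needed. Since $M$ is bounded, applying It\^o's formula to $(M^i_s)^2$, localizing along the exhaustion of $\Delta$ used in Lemma~\ref{lem:exists_aldous}, and letting the localizers tend to $1$ by monotone convergence gives
\[\textstyle \mathbb E\left[\int_0^1 \Tr(\Sigma_s)\,ds\right]\le d<\infty,\qquad \Sigma_s:=\sigma_*(s,M_s)\sigma_*(s,M_s)^\top=\frac{2\,(\nabla^2 g(M_s))^{-1}}{1-s},\]
so that $\int_0^1 \Tr(\Sigma_s)\,ds<\infty$ almost surely. Combining this with the elementary bound $\Tr(A^{-1})\ge\lambda_{min}(A)^{-1}$ valid for positive definite $A$, one obtains the pointwise estimate $\Tr(\Sigma_s)\ge \tfrac{2}{(1-s)\,\lambda_{min}(\nabla^2 g(M_s))}$ along the trajectory.

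The core of the argument is then a pathwise contradiction. Assume $\mathbb P\big(M_1\in\overline{\Delta}\setminus\mathcal V\big)>0$ and fix such an $\omega$, with $x^0:=M_1(\omega)$. If $x^0\in\Delta$, then $g$ is smooth and $\nabla^2 g(x^0)\succ0$, so $x\mapsto\lambda_{min}(\nabla^2 g(x))$ is continuous and positive near $x^0$; if $x^0\in\partial\Delta\setminus\mathcal V$, then Theorem~\ref{thm:quantitive}(i) gives $\limsup_{x\to x^0}\lambda_{min}(\nabla^2 g(x))<\infty$. In either case there exist $\Lambda<\infty$ and a neighborhood $U$ of $x^0$ with $\lambda_{min}(\nabla^2 g(x))\le\Lambda$ on $U\cap\Delta$. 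Since $M_s(\omega)\to x^0$, there is $\varepsilon>0$ with $M_s(\omega)\in U$ for all $s\in(1-\varepsilon,1)$, whence
\[\textstyle \int_0^1 \Tr(\Sigma_s(\omega))\,ds \;\ge\; \frac{2}{\Lambda}\int_{1-\varepsilon}^1\frac{ds}{1-s}\;=\;\infty,\]
contradicting the almost sure finiteness established above. Hence $M_1\in\mathcal V$ a.s.

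The main obstacle, and the reason Theorem~\ref{thm:quantitive}(i) is needed, lies entirely in the case $x^0\in\partial\Delta\setminus\mathcal V$: a priori $M$ could linger on a proper (non-vertex) face, and the only thing preventing this is that $\lambda_{min}(\nabla^2 g)$ stays bounded there, so that $(\nabla^2 g)^{-1}$ remains non-degenerate and the deterministic blow-up $\tfrac{1}{1-s}$ of $\Sigma_s$ is not offset. The remaining technical subtlety is merely that $x^0=M_1$ is random, which is handled by running the contradiction $\omega$ by $\omega$ once the global bound $\int_0^1\Tr(\Sigma_s)\,ds<\infty$ a.s.\ is in place.
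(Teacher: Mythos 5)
Your proposal is correct and follows essentially the same strategy as the paper: establish $\int_0^1 \Tr(\Sigma_s)\,ds < \infty$ a.s.\ (the paper via BDG, you via It\^o on $(M^i_s)^2$, both fine), then rule out convergence to interior points by smoothness of $g$ and to non-vertex boundary points by the $\lambda_{\min}(\nabla^2 g)$ bound of Theorem~\ref{thm:quantitive}(i), in each case forcing the integral $\int_0^1 \tfrac{2\Tr((\nabla^2 g(M_s))^{-1})}{1-s}\,ds$ to diverge. The paper phrases this as two separate measure-zero arguments while you combine them into a single pathwise contradiction, but the content is the same.
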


\begin{proof}
    In light of Lemma \ref{lem:exit},  we have an a.s.\ dichotomy: either $M_1\in \Delta$, or { $M_1\in \partial \Delta$ while $M_s\in\Delta$ for all $s<1$.}

 As the martingale $M$ is bounded, we have by the BDG inequality that $\mathbb E[\langle M^i,M^k \rangle_1]<\infty$ for each $i,k$. Hence
    \begin{align*}\textstyle 
\mathbb E \left[ \int_0^1 \Tr \left(\sigma_{*} \sigma^{\top}_{*}(t,M_t)  \right)\, dt\right]= \mathbb E \left[ \int_0^1 \frac{2 \Tr  \left( (\nabla^2 g(M_t))^{-1} \right)}{1-t}  \, dt\right] < +\infty. 
\end{align*}

On the event $\{M_1\in\Delta\}$ we have a.s., by martingale convergence, that $$\textstyle \int_0^1 \frac{2 \Tr  \left( (\nabla^2 g(M_t))^{-1} \right)}{1-t}  \, dt = \infty,$$
as the numerator of the integrand concentrates around a strictly positive value for $t\sim 1$. 
Hence $\{M_1\in\Delta\}$ must have measure zero. On the other hand,
observe that $\Tr   (\nabla^2 g(M_t)^{-1}) \geq \frac{1}{\lambda_{min}(\nabla^2 g(M_t)) }$ by the positivity of eigenvalues.  Thanks to Theorem \ref{thm:quantitive} (i), we have that $$\liminf_{t\to 1} \Tr  \left( (\nabla^2 g(M_t))^{-1} \right)>0,$$ a.s.\ on   $M_1\in  \partial\Delta \backslash \{e_0,e_1,\dots,e_d\}  $. But then again $\int_0^1 \frac{2 \Tr  \left( (\nabla^2 g(M_t))^{-1} \right)}{1-t}  \, dt = \infty$ a.s.\ on   $M_1\in  \partial\Delta \backslash \{e_0,e_1,\dots,e_d\}  $. We conclude that the latter must be a measure zero set too. 
\end{proof}

Denote $\Sigma_*(t,x)=\sigma_*\sigma_*^\top (t,x)=(\sigma_*(t,x))^2 =\frac{2 (\nabla^2 g(x))^{-1} }{1-t}$, and recall $f(t)=d(1-t)\log(1-t)$. From the following result, we deduce that the $(M_t)$ realizes the optimal value \eqref{eq:optimal}.

\begin{lemma}\label{lem:mart_prop_logsigma}
    $\log\det\Sigma_*(t,M_t)  $ is a martingale on $[0,1)$,  and we have
    \[\textstyle \mathbb E\left[\int_t^1 -\log\det\Sigma_*(u,M_u) \, du \,\Big|\,\mathcal F_t  \right] = f(t)+(1-t)g(M_t) .\]
\end{lemma}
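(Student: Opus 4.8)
The plan is to reduce both claims to statements about the scalar process $g(M_t)$, which is well-defined for all $t<1$ since $M_t\in\Delta$ there by Lemma~\ref{lem:exit}. Using the Monge-Amp\`ere equation \eqref{eq:Monge} in the equivalent form $\det\nabla^2 g(x)=2^d e^{g(x)}$, a direct computation of the determinant of $\Sigma_*(t,x)=\tfrac{2}{1-t}(\nabla^2 g(x))^{-1}$ gives the pointwise identity $\log\det\Sigma_*(t,x)=-g(x)-d\log(1-t)$ on $[0,1)\times\Delta$. Hence $\log\det\Sigma_*(t,M_t)=-g(M_t)-d\log(1-t)$, and both assertions of the lemma become assertions about $g(M_t)$.

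Next I would apply It\^o's formula to $g(M_t)$ along $dM_t=\sigma_*(t,M_t)\,dB_t$. Since $\tfrac12\,\Tr\!\big(\nabla^2 g(M_t)\,\Sigma_*(t,M_t)\big)=\tfrac{1}{1-t}\,\Tr(I_{d\times d})=\tfrac{d}{1-t}$, one obtains $dg(M_t)=\nabla g(M_t)^\top\sigma_*(t,M_t)\,dB_t+\tfrac{d}{1-t}\,dt$, so that $g(M_t)+d\log(1-t)$ is driftless and, by the identity above, $\log\det\Sigma_*(t,M_t)$ is a local martingale on $[0,1)$. To upgrade this to a true martingale I would bound the quadratic variation of the stochastic integral: for $t<1$ it equals $\int_0^t \tfrac{2\,\nabla g(M_s)^\top(\nabla^2 g(M_s))^{-1}\nabla g(M_s)}{1-s}\,ds$, which by Theorem~\ref{thm:quantitive}(ii) is at most $\big(\sup_{x\in\Delta}\nabla g(x)^\top(\nabla^2 g(x))^{-1}\nabla g(x)\big)\,(-2\log(1-t))<\infty$. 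Hence the stochastic integral is an $L^2$ martingale on every $[0,t]\subset[0,1)$, which proves the first assertion; in particular $\mathbb E[g(M_u)\mid\mathcal F_t]=g(M_t)+d\log(1-t)-d\log(1-u)$ for $0\le t\le u<1$.

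For the conditional-expectation formula I would integrate the identity $-\log\det\Sigma_*(u,M_u)=g(M_u)+d\log(1-u)$ over $u\in[t,1]$ and exchange the conditional expectation with the time integral. This Fubini step is legitimate because $g>0$ on $\Delta$ (Lemma~\ref{lem:maincomponent} together with Corollary~\ref{cor:val_function_fg}) and, taking full expectation in the martingale identity above with $t=0$, $\mathbb E\!\left[\int_0^1 g(M_u)\,du\right]=\int_0^1\!\big(g(x)-d\log(1-u)\big)\,du=g(x)+d<\infty$. Substituting $\mathbb E[g(M_u)\mid\mathcal F_t]$ makes the $\log(1-u)$ terms cancel:
\begin{align*}
\mathbb E\!\left[\int_t^1-\log\det\Sigma_*(u,M_u)\,du\,\Big|\,\mathcal F_t\right]
=\int_t^1\big(g(M_t)+d\log(1-t)\big)\,du=(1-t)g(M_t)+d(1-t)\log(1-t),
\end{align*}
which is exactly $f(t)+(1-t)g(M_t)$.

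The only genuinely delicate point is the local-to-true-martingale upgrade, i.e.\ controlling the diffusion coefficient of $g(M_t)$ as $t\uparrow 1$; this is precisely where the a priori bound Theorem~\ref{thm:quantitive}(ii) is indispensable. The remaining ingredients are It\^o's formula, the defining PDE for $g$, and elementary integration.
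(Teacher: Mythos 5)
Your proposal is correct and follows essentially the same route as the paper: the identity $\log\det\Sigma_*(t,x)=-g(x)-d\log(1-t)$ from \eqref{eq:Monge}, It\^o's formula to obtain the driftless dynamics, the uniform bound of Theorem~\ref{thm:quantitive}(ii) to upgrade the local martingale to a true martingale via an $L^2$ bound on the quadratic variation, and a Tonelli/Fubini exchange (the paper phrases this via conditional monotone convergence, which is equivalent given $g>0$) to conclude the conditional-expectation formula. The only cosmetic difference is that the paper additionally supplies a second, Theorem~\ref{thm:quantitive}-free argument for the local-to-true-martingale step via localization, Fatou, and Jensen, which you omit but do not need.
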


\begin{proof}

As $g$ is the smooth solution to \eqref{eq:Monge}, we get that
\begin{align*}\textstyle 
\log \det \Sigma_*(t,M_t)&\textstyle =-d\log(1-t)-\log\det \left(\frac{1}{2}\nabla^2 g(M_t) \right) =-d \log (1-t) -g(M_t). 
\end{align*}
Then an application of It\^{o}'s formula yields that 
\begin{align*}
    d \log \det \Sigma_*(t,M_t)=&\textstyle  \frac{d}{1-t} dt - \frac{1}{2} \sum_{i,j=1}^d \nabla^2_{ij} g(M_t)   d \langle M^i,M^j \rangle_t - \nabla g(M_t) \,dM_t \\
    =&\textstyle \frac{d}{1-t} dt-\frac{1}{2} \Tr\left(\nabla^2 g(M_t) \frac{2 (\nabla^2 g(M_t))^{-1}}{1-t} \right) dt -\nabla g(M_t) \,dM_t \\
    =& -\nabla g(M_t) \, dM_t,
\end{align*}
and therefore $\log \det \Sigma_*(t,M_t)$ is a local martingale. 

 Due to the SDE \eqref{eq:Ald_final}, the instantaneous quadratic variation of $\log \det \Sigma_*(t,M_t)$ is $$\textstyle \frac{2}{1-t}\nabla g(M_t)^\top (\nabla^2 g(M_t))^{-1} \nabla g(M_t)\leq \frac{2}{1-t}\sup_{z\in \Delta}\nabla g(z)^\top (\nabla^2 g(z))^{-1} \nabla g(z) ,$$ and according to Theorem \ref{thm:quantitive}.(ii)
the supremum on the r.h.s.\ is finite. Hence $\log \det \Sigma_*(s,M_s)$ is a martingale for $s\in[0,1)$. For convenience of the reader we provide an alternative argument which is not based on Theorem \ref{thm:quantitive}: 
Consider the localization of $M_t^{\tau_n}$ for $n  \in \mathbb N$, where $\tau_n:=\inf \left\{t: \, g(M_t) \geq n \right\}$. According to Lemma~\ref{lem:exit} and Lemma~\ref{lem:appl_MA_win}, we have $\lim\limits_{n \to \infty} \tau_n=1$. Then it is straightforward that $$\left(g(M^{\tau_n}_t)+d\log(1-t \wedge \tau_n)\right)_{t \in [0,1)}$$ is a martingale. As $g\geq 0$, Fatou's lemma gives $$\E[g(M_t)+d \log(1-t)] \leq \liminf_{n \to \infty} \E \left[g(M^{\tau_n}_t)+d\log(1-t \wedge \tau_n)\right]<+\infty.$$
Therefore for any $t \in [0,1)$, $\E[g(M_t)] < +\infty$. Then due to Jensen's inequality $$0\leq g(M_t^{\tau_n}) \leq \E[g(M_t) \,| \, \mathcal{F}_{t \wedge \tau_n}],$$ which implies that  $\{g(M_t^{\tau_n}): \, n  \in \mathbb N \}$ is uniformly integrable as it is bounded from above by conditional expectations of the integrable random variable $g(M_t)$. Hence the almost surely convergence  $g(M^{\tau_n}_t)+d\log(1-t \wedge \tau_n) \to g(M_t)+d\log(1-t)$ holds in $L^1$, for any $t \in [0,1)$. We conclude that $g(M_t)+d\log(1-t)=- \log \det \Sigma_*(t,M_t)$ is also a martingale.

For the second claim, we have
\begin{align*}
     &\textstyle \mathbb E\left[\int_t^{1} -\log\det\Sigma_*(u,M_u) \,du \,\Big|\,\mathcal F_t  \right]  = \mathbb E\left[ \int_t^1 d \log(1-u)+g(M_u) \,du \,\Big|\,\mathcal F_t  \right] \\
     &\textstyle =\lim\limits_{\epsilon \to 0}\mathbb E\left[ \int_t^{1-\epsilon} d \log(1-u) \, du\,\Big|\,\mathcal F_t  \right]+ \lim\limits_{\epsilon \to 0}\mathbb E\left[ \int_t^{1-\epsilon} g(M_u) \, du\,\Big|\,\mathcal F_t  \right] \\
     &\textstyle =\lim\limits_{\epsilon \to 0}\mathbb E\left[ \int_t^{1-\epsilon} d \log(1-u)+g(M_u) \,du\,\Big|\,\mathcal F_t  \right] =(1-t) d \log(1-t)+ (1-t)g(M_t),
\end{align*}
where we make use of the conditional monotone convergence theorem in the second equality, and the martingale property of $d\log(1-u)+g(M_u)$ for $u<1$ in the last equality. 
\end{proof}
 
We can conclude the proof of Theorem \ref{thm:intro_existence}.
 
\begin{proof}[Proof of Theorem \ref{thm:intro_existence}]
  By Lemmas \ref{lem:exists_aldous}-\ref{lem:exit} the Aldous martingale exists, is uniquely determined as a strong solution  to \eqref{eq:AMTG}  up to time 1, and by Lemma \ref{lem:appl_MA_win} it is a win-martingale. By Lemma \ref{lem:mart_prop_logsigma} we have
    $f(t)+(1-t)g(x)=\mathbb E\left[\int_t^1 -\log\det \Sigma_*(u,M^{t,x}_u)\,du,\right]$, 
    for $M^{t,x}$ the Aldous martingale started at $x$ at time $t$. As shown in Corollary \ref{cor:val_function_fg}, the value function of the problem is $f(t)+(1-t)g(x)$.
    Taking $t=0$ we conclude the optimality of the Aldous martingale. For the uniqueness of the optimizer we can argue just as in \cite{BBexciting}.
\end{proof}

\bibliographystyle{siam}
\bibliography{ref.bib}

\end{document}